\theoremstyle{plain} 
	\newtheorem{thm}{Theorem}[section]
	\newtheorem*{thm*}{Theorem}
	\newtheorem{prop}[thm]{Proposition}
	\newtheorem{conj}[thm]{Conjecture}
	\newtheorem*{conj*}{Conjecture}
	\newtheorem{assum}[thm]{Assumption}
\theoremstyle{definition}
	\newtheorem{defn}[thm]{Definition}%[section]
\theoremstyle{remark}
\def\AA{{\mathbb A}}
\def\LL{{\mathbb L}}
\def\RR{{\mathbb R}}
\def\ZZ{{\mathbb Z}}
\def\A{{\mathcal A}}
\def\G{{\mathcal G}}
\def\H{{\mathcal H}}
\def\J{{\mathcal J}}
\def\L{{\mathcal L}}
\def\O{{\mathcal O}}
\def\T{{\mathcal T}}
\def\f{{\mathfrak f}}
\def\m{{\mathfrak m}}
\def\ff{{\mathfrak F}}
\def\p{\partial }
\def\ns{{\nabla}\hspace{-1.4mm}\raisebox{0.3mm}{\text{\footnotesize{\bf /}}}}
\title[]{From Calabi--Yau dg Categories to Frobenius manifolds via Primitive Forms}
\author[]{Atsushi Takahashi}
\address{Department of Mathematics, Graduate School of Science, Osaka University, 
Toyonaka Osaka, 560-0043, Japan}
\email{takahashi@math.sci.osaka-u.ac.jp}
\subjclass[2010]{}
\keywords{}
\begin{document}

\begin{abstract}
It is one of the most important problems in mirror symmetry 
to obtain functorially Frobenius manifolds from smooth compact Calabi-Yau $A_\infty$-categories. 
This paper gives an approach to this problem based on the theory of primitive forms. 
Under an assumption on the formality of a certain homotopy algebra, a formal primitive form for a smooth compact 
Calabi--Yau dg algebra can be constructed, which enable us to have a formal Frobenius manifold.
\end{abstract}

\maketitle

%%%%%%%%%%%%%%%%%%%%%%%%%%%%%%%%%%%%%%%%%%%%%%%%%%%%%%%%%%%%%%%%%%%%%%%%%
%%%%%%%%%%%%%%%%%%%%%%%%%%%%%%%%%%%%%%%%%%%%%%%%%%%%%%%%%%%%%%%%%%%%%%%%%
\section{Introduction}

Mirror symmetry gives an identification between two objects coming from different mathematical origins. 
It has been studied intensively by many mathematician for more than twenty years since it yields important, interesting and unexpected geometric information.
Almost ten years before the discovery of mirror symmetry, 
K.~Saito studied a deformation theory of an isolated hypersurface singularity in order to generalize 
the theory of elliptic integrals \cite{s:1}.
There he developed a certain generalization of the complex Hodge theory of Calabi--Yau type and 
found a differential geometric structure, which he called a flat structure,  
on the base space of the deformation (cf. \cite{st:1} for a summary). This is known as Saito's theory of primitive forms.
His flat structure was also found later by Dubrovin in his study of 
two dimensional topological field theories in their relation with integrable systems \cite{d:1}, 
which is axiomatized under the name of a Frobenius manifold (cf. \cite{he:1,ma:1,sab:1}).
The classical mirror symmetry conjecture states the existence of an isomorpshism between the Frobenius manifold from 
the Gromov--Witten theory of a Calabi-Yau manifold and the one from the deformation theory of another Calabi-Yau manifold.
In order to explain this mysterious correspondence, 
Kontsevich conjectured in \cite{K} that the derived category of coherent sheaves on a Calabi-Yau manifold should be equivalent to the derived Fukaya category for another Calabi-Yau manifold, 
which is known as the homological mirror symmetry conjecture.
In particular, he expects that the classical mirror symmetry isomorphism can naturally be induced by the homological mirror symmetry equivalence via the moduli space of $A_\infty$-deformations.   
As is already explained in \cite{st:1}, although Saito's construction of the Frobenius manifolds based on 
filtered de Rham cohomology groups, Gau\ss--Manin connctions, higher residue pairings and primitive forms is formulated only 
for isolated hypersurface singularities, several notion and the idea of this construction is of general nature. 
It has inspired and motivated the non-commutative Hodge theory for Calabi--Yau $A_\infty$ categories (c.f. \cite{kakop:1, ks:1}), 
which is the basic ingredient in constructing primitive forms in this categorical language.
The present paper tries to make the contents of Appendix in \cite{st:1}, where we give a list how objects in 
the original Saito's theory of primitive forms may be generalized and rewritten, 
as precise as possible at present for smooth compact Calabi--Yau dg algebras. 
More precisely, in Section~2 we recall some notations and terminologies of differential graded algebras, and then 
we introduce the dg analogue of the weighted homogeneous polynomial with an isolated singularity at the origin.
It is Proposition~\ref{prop:iL}, the ``Cartan calculus", that is the key in our story,
which enable us to translate the original Saito's theory for isolated weighted homogeneous hypersurface singularities 
verbatim into the dg categorical Saito theory. 
In Section~3, we show that a very good section 
can be constructed under a certain formality assumption which is motivated by the formality considered in \cite{BK:1}, 
the identification of the (formal) moduli space of $A_\infty$-deformation of the derived category of coherent sheaves with 
the formal neighborhood of zero in the total cohomology group of polyvectors.
After studying the versal deformation of a Saito structure in Section~4, 
we show how a primitive form is deduced from a very good section based on the famous method developed by M.~Saito~\cite{MSaito} 
and Barannikov~\cite{bar:1,bar:2}. 
\smallskip
\noindent
{\it Acknowledgement}\\
\indent
The author would like to thank organizers of the workshop ``Primitive forms and related subjects".
He is supported by JSPS KAKENHI Grant Numbers 23654009 and 26610008. 
%%%%%%%%%%%%%%%%%%%%%%%%%%%%%%%%%%%%%%%%%%%%%%%%%%%%%%%%%%%%%%%%%%%%%%%%%
%%%%%%%%%%%%%%%%%%%%%%%%%%%%%%%%%%%%%%%%%%%%%%%%%%%%%%%%%%%%%%%%%%%%%%%%%
\section{Notations and Terminologies}
In this paper, we denote by $k$ an algebraically closed field of characteristic zero with the unit $1_k$.
%%%%%%%%%%%%%%%%%%%%%%%%%%%%%%%%%%%%%%%%%%%%%%%%%%%%%%%%%%%%%%%%%%%%%%%%%
\subsection{Differential graded algebras}
On dg algebras and derived categories of modules over them, 
we refer the reader to \cite{Keller}, of which we follow the terminologies and notations. 
A {\em differential graded $k$-algebra} (or simply, {\em dg $k$-algebra}) is 
a $\ZZ$-graded $k$-algebra $A=\bigoplus_{p\in\ZZ}A^p$ 
equipped with a differential, a $k$-linear map $d_A: A\longrightarrow A$ of degree one with $d_A^2=0$ 
satisfying the Leibniz rule$:$
\begin{equation}\label{eq:L}
d_A(a_1  a_2)=(d_Aa_1)a_2+(-1)^{p}a_1(d_Aa_2),\quad a_1\in A^{p}, a_2\in A.
\end{equation}
For a homogeneous element $a\in A^p$, let $\overline{a}$ be the degree of $a$, namely, $\overline{a}:=p$.
Denote by $H^\bullet(A,d_A):=\bigoplus_{p\in\ZZ}H^p(A,d_A)$ the cohomology of $A$ which is a graded $k$-module.
Throughout this paper, $A$ denotes a dg $k$-algebra.
We recall some terminologies for our later use.
For a dg $k$-algebra $A$, $A^e:=A^{op}\otimes_k A$ denotes the enveloping dg $k$-algebra of $A$ where 
$A^{op}$ is the opposite dg $k$-algebra of $A$. 
The dg $A$-module is called perfect if it belongs to the smallest full triangulated subcategory of the derived category of 
dg $A$-modules which contains $A$ and is closed under direct summands and isomorphisms. 
\begin{defn}
Let $A$ be a dg $k$-algebra. 
\begin{enumerate}
\item 
A dg $k$-algebra $A$ is called non-negatively graded if $A^p=0$ for all negative integers $p$.
\item 
A dg $k$-algebra $A$ is called compact if $A$ is a perfect dg $k$-module, 
namely, if its cohomology $H^\bullet(A,d_A)$ is finite dimensional
(cf. Kontsevich--Soibelman, Definition~8.2.1 in \cite{ks:1}). 
\item
A dg algebra $A$ is called smooth if $A$ is a perfect dg $A^e$-module  (cf. Kontsevich--Soibelman, Definition~8.1.2 in \cite{ks:1}).
\item
A non-negatively graded dg $k$-algebra $A$ is called connected if $H^0(A,d_A)=k[1_A]$.
\end{enumerate}
\end{defn}
For dg $A$-modules $M, N$, we shall denote by $\RR{\rm Hom}_A(M,N)$ the $\RR{\rm Hom}$-complex.
Let $A^!:=\RR{\rm Hom}_{(A^e)^{op}}(A,A^e)$ be the inverse dualizing complex.
\begin{prop}
Let $A$ be a smooth dg $k$-algebra. 
The natural morphism in the derived category of dg $A^e$-modules
\[
A\longrightarrow \RR{\rm Hom}_{A^e}(\RR{\rm Hom}_{(A^e)^{op}}(A,A^e),A^e)
=\RR{\rm Hom}_{A^e}(A^!,A^e)
\]
is an isomorphism. 
In particular, we have a natural isomorphism in the derived category of dg $k$-modules
\begin{equation}\label{eq:1}
A\otimes^\LL_{A^e} A\cong A\otimes^\LL_{A^e}\RR{\rm Hom}_{(A^e)^{op}}(A^!,A^e)\cong \RR{\rm Hom}_{A^e}(A^!,A).
\end{equation}
\end{prop}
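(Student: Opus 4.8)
The plan is to establish the first isomorphism as an instance of reflexivity of perfect complexes over the enveloping algebra, and then derive \eqref{eq:1} by standard manipulations with derived tensor and $\RR\Hom$ functors. Since $A$ is smooth, $A$ is a perfect dg $A^e$-module, so $A$ is a compact object in the derived category $D(A^e)$. For any additive category with a suitable duality, every dualizable (i.e. perfect) object is reflexive: the canonical biduality morphism is an isomorphism. Concretely, I would set $\D := \RR\Hom_{(A^e)^{op}}(-,A^e)$, the functor taking a right $A^e$-module to a left $A^e$-module (equivalently a right $(A^e)^{op}$-module), so that $A^! = \D(A)$. The claim $A \xrightarrow{\ \sim\ } \RR\Hom_{A^e}(A^!,A^e) = \D \circ \D (A)$ is then exactly the statement that the evaluation/biduality natural transformation $\mathrm{id} \Rightarrow \D\D$ is an isomorphism on $A$.

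First I would verify the claim on the free module of rank one, i.e. on $A^e$ itself, where $\RR\Hom_{(A^e)^{op}}(A^e, A^e) \cong A^e$ and the biduality map is visibly the identity; this is the base case. Next I would observe that the functor $\D$ is exact (it is a derived Hom), hence sends distinguished triangles to distinguished triangles, and commutes with finite direct sums and with direct summands (retracts). Since the biduality transformation is natural, the full subcategory of objects on which it is an isomorphism is a thick (triangulated, summand-closed) subcategory of $D(A^e)$ containing $A^e$. By the very definition of perfect complex recalled in the excerpt — the smallest full triangulated subcategory containing $A^e$ (the free module) and closed under summands and isomorphisms — and using that $A$ is perfect over $A^e$ by smoothness, it follows that $A$ lies in this subcategory, which gives the first isomorphism.

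Granting the biduality isomorphism, the chain \eqref{eq:1} follows formally. For the first isomorphism in \eqref{eq:1}, I would apply the functor $A \otimes^{\LL}_{A^e}(-)$ to the biduality isomorphism $A \cong \RR\Hom_{(A^e)^{op}}(A^!,A^e)$; since derived tensor preserves isomorphisms in the derived category this gives $A \otimes^{\LL}_{A^e} A \cong A \otimes^{\LL}_{A^e}\RR\Hom_{(A^e)^{op}}(A^!,A^e)$. For the second isomorphism, I would invoke the standard tensor–Hom adjunction in its derived form: when $A^!$ is perfect over $A^e$ (which it is, again by smoothness, since $\D$ preserves perfectness) one has a natural isomorphism $M \otimes^{\LL}_{A^e} \RR\Hom_{(A^e)^{op}}(N, A^e) \cong \RR\Hom_{A^e}(N, M)$ for $N$ perfect, specialized here to $M = A$ and $N = A^!$, yielding $\RR\Hom_{A^e}(A^!, A)$.

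The main obstacle is bookkeeping rather than conceptual: one must be careful about the left/right and $A^e$ versus $(A^e)^{op}$ module structures so that the two occurrences of $\RR\Hom$ into $A^e$ compose to a genuine biduality functor, and one must ensure the projection (tensor–Hom) morphism $M \otimes^{\LL}_{A^e}\RR\Hom_{(A^e)^{op}}(N,A^e) \to \RR\Hom_{A^e}(N,M)$ is an isomorphism, which requires precisely the perfectness of $N = A^!$ so that one may reduce to the free case $N = A^e$ where it is the obvious identification. All of these reductions are the standard dévissage along the thick-subcategory generated by the free module, so once the base case and the stability under triangles and summands are recorded, the result follows.
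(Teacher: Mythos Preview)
Your argument is correct and is the standard d\'evissage proof of reflexivity for perfect complexes. Note, however, that the paper states this proposition without proof --- it is presented as a background fact in the preliminaries section, with no accompanying argument --- so there is no proof in the paper to compare against. Your write-up supplies exactly the expected justification: the biduality map is an isomorphism on $A^e$, the class of objects on which it is an isomorphism is thick, and smoothness places $A$ in that class; the chain \eqref{eq:1} then follows from the tensor--Hom compatibility for perfect $N=A^!$, itself established by the same d\'evissage. The bookkeeping caveat you raise about left/right $A^e$-structures is the only place one must be careful, and you have identified it correctly.
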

%%%%%%%%%%%%%%%%%%%%%%%%%%%%%%%%%%%%%%%%%%%%%%%%%%%%%%%%%%%%%%%%%%%%%%%%%
\subsection{Calabi--Yau dg algebras}
\begin{defn}[Ginzburg, Definition~3.2.3 in \cite{Ginzburg}]
Fix an integer $w\in \ZZ$. A smooth dg $k$-algebra $A$ is called Calabi--Yau of dimension $w$
if there exists an isomorphism 
\begin{equation}\label{defn:CY}
A^!\cong T^{-w}A
\end{equation}
in the derived category of dg $A^e$-modules.
\end{defn}
Let $A$ be a smooth Calabi--Yau dg $k$-algebra of dimension $w$. 
It is important to note that a choice of an element in $H^{0}(\RR{\rm Hom}(A^!,T^{-w}A))$ giving
the isomorphism~\eqref{defn:CY} yields an isomorphism
\begin{equation}
\RR{\rm Hom}_{A^e}(A,A)\cong \RR{\rm Hom}_{A^e}(T^{w} A^!,A)\cong T^{-w}(A\otimes^\LL_{A^e}A),
\end{equation}
in the derived category of dg $k$-modules due to the isomorphism~\eqref{eq:1}.
Recall that for a smooth compact dg $k$-algebra $A$ the Serre functor on the perfect derived category of dg $A^e$-modules
is given by the dg $A^e$-module $A^*:=\RR{\rm Hom}_{k}(A,k)$.
It is known that the dg $A^e$-module $A^!$ gives its quasi-inverse, which implies the following.
\begin{prop}
Let $A$ be a smooth compact Calabi--Yau dg $k$-algebra of dimension $w$. 
There exists an isomorphism in the perfect derived category of dg $A^e$-modules depending on
a choice of an element in $H^{-w}(\RR{\rm Hom}(A^!,A))$:
\begin{equation}
T^{w}A\cong  A^*.
\end{equation}
\end{prop}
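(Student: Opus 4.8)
The plan is to deduce the statement directly from the fact recalled immediately above, namely that for a smooth compact dg algebra the dualizing bimodule $A^*=\RR{\rm Hom}_k(A,k)$ represents the Serre functor on the perfect derived category, while the inverse dualizing complex $A^!$ represents its quasi-inverse. Translated from autoequivalences to the bimodules that represent them, this says precisely that $A^*$ and $A^!$ are mutually inverse with respect to the derived tensor product over $A$; concretely, there is an isomorphism
\[
A^*\otimes^\LL_A A^!\cong A
\]
in the perfect derived category of dg $A^e$-modules, with $A$ serving as the monoidal unit. I would take this bimodule identity as the substantive input, since its verification is exactly where smoothness (ensuring $A^!$ is perfect and the biduality is well behaved) and compactness/properness (ensuring $A^*$ represents Serre duality) enter.

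Next I would feed in the Calabi--Yau datum. A choice of element $\eta\in H^{-w}(\RR{\rm Hom}(A^!,A))=H^{0}(\RR{\rm Hom}(A^!,T^{-w}A))$ is the same as a choice of morphism $A^!\to T^{-w}A$ in the derived category of dg $A^e$-modules, and requiring $\eta$ to realize the Calabi--Yau isomorphism~\eqref{defn:CY} gives $A^!\cong T^{-w}A$, equivalently $T^{w}A^!\cong A$ after applying the shift $T^{w}$.

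The conclusion then follows by a purely formal manipulation. Substituting $A\cong T^{w}A^!$ into the unit isomorphism and commuting the shift past the tensor product yields the chain
\[
A^*\cong A^*\otimes^\LL_A A\cong A^*\otimes^\LL_A T^{w}A^!\cong T^{w}\!\left(A^*\otimes^\LL_A A^!\right)\cong T^{w}A,
\]
which is the asserted isomorphism $T^{w}A\cong A^*$. Every arrow in this chain is induced by $\eta$ together with the canonical unit and shift identifications, so the resulting isomorphism genuinely depends on the chosen $\eta$, as claimed.

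The only real obstacle is the first step, the bimodule identity $A^*\otimes^\LL_A A^!\cong A$. If one prefers not to invoke the Serre-functor formalism as a black box, I would instead prove it from the definitions $A^*=\RR{\rm Hom}_k(A,k)$ and $A^!=\RR{\rm Hom}_{(A^e)^{op}}(A,A^e)$, using the self-duality isomorphism~\eqref{eq:1} valid for smooth $A$, together with the finiteness of $H^\bullet(A,d_A)$ guaranteed by compactness, which is what allows one to commute $\RR{\rm Hom}_k(-,k)$ with the relevant derived tensor products. Once this identity is in hand, everything after it is formal, and the dependence on $\eta$ is transparent.
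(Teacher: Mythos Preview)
Your proposal is correct and follows exactly the approach the paper indicates: the paper states (without further proof) that the proposition is implied by the fact that $A^!$ represents the quasi-inverse of the Serre functor given by $A^*$, and you have simply spelled out that implication via the bimodule identity $A^*\otimes^\LL_A A^!\cong A$ together with the Calabi--Yau isomorphism $A^!\cong T^{-w}A$. There is nothing to add.
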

%%%%%%%%%%%%%%%%%%%%%%%%%%%%%%%%%%%%%%%%%%%%%%%%%%%%%%%%%%%%%%%%%%%%%%%%%
\subsection{Hochschild cohomology}
For graded $k$-modules $M$ and $N$, 
$\G r_k (M,N)=\bigoplus_{p\in\ZZ} \G r_k (M,N)^p$ denotes a graded $k$-module 
where $\G r_k (M,N)^p$ is a $k$-module consisting of graded $k$-linear maps from $M$ to $N$ of degree $p$. 
For a dg $k$-algebra $A$, set
\begin{equation}
C^\bullet(A):=\prod_{n\ge 0} \G r_k ((TA)^{\otimes n},A),
\end{equation}
For $a\in A$, denote it by $Ta$ when we consider it as an element of $TA$.
Define a $k$-linear map $d:C^\bullet(A)\longrightarrow C^{\bullet+1}(A)$ by 
$(df)(1_k ):=d_A(f(1_k ))$ for $f\in \G r_k (k,A)$ and, for $f\in \G r_k ((TA)^{\otimes n},A)^p$, $n\ge 1$,
\begin{multline*}
(df)(Ta_1\otimes \dots \otimes Ta_n )
:=d_A\left(f(Ta_1\otimes \dots \otimes Ta_n )\right)\\
-\sum_{i=1}^n(-1)^{p-1+s_{i-1}} f(Ta_1\otimes \dots \otimes Ta_{i-1}\otimes T(d_Aa_i)\otimes Ta_{i+1}\otimes \dots \otimes Ta_n),
\end{multline*}
where $s_i:=\sum_{m=1}^{i}(\overline{a_m }-1)$.
Define another $k$-linear map $\delta:C^\bullet(A)\longrightarrow C^{\bullet+1}(A)$ by 
$(\delta f)(Ta_1):=(-1)^{(p-1)(\overline{a_1}-1)+\overline{a_1}} a_1f(1_k )+(-1)^{p} f(1_k )a_{1}$ for $f\in \G r_k (k,A)^p$ 
and, for $f\in \G r_k ((TA)^{\otimes n},A)^p$, $n\ge 1$, 
\begin{multline*}
(\delta f)(Ta_1\otimes \dots\otimes Ta_{n+1} )
:=(-1)^{p+s_n} f(Ta_1\otimes \dots \otimes Ta_{n})a_{n+1}\\
+\sum_{i=1}^{n}(-1)^{p-1+s_i} f(Ta_1\otimes \dots \otimes Ta_{i-1}\otimes T(a_ia_{i+1})\otimes Ta_{i+2}\otimes \dots \otimes Ta_{n+1})\\
+(-1)^{(p-1)(\overline{a_1}-1)+\overline{a_1}}  a_{1} f(Ta_2\otimes \dots \otimes Ta_{n+1}).
\end{multline*}
The following proposition is well-known.
\begin{prop}
The $k$-linear maps $d, \delta$ define differentials on $C^\bullet(A)$ and 
satisfy $d\delta+\delta d=0$. Moreover, the dg $k$-module $(C^\bullet(A),\p:=d+\delta)$ is isomorphic to 
the dg $k$-module $\RR{\rm Hom}_{A^e}(A,A)$ in the derived category of dg $k$-modules.
\end{prop}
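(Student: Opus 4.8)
The plan is to verify the claimed properties of the bar-type complex $C^\bullet(A)$ directly and then identify it with the standard derived $\Hom$ complex. First I would check that $d$ and $\delta$ are each differentials, i.e.\ $d^2 = 0$ and $\delta^2 = 0$, by a term-by-term bookkeeping of signs. The operator $d$ is assembled from the internal differential $d_A$ of the algebra applied to the output and (with signs) to each input slot; squaring it produces only terms of the form $d_A^2$ applied somewhere, which vanish because $d_A^2 = 0$ and because the sign conventions encoded in $s_i$ and the Koszul-type exponents are arranged precisely so that the two ways of hitting two different input slots cancel against the cross terms. The operator $\delta$ is the Hochschild coboundary assembled from the multiplication $m$ of $A$; squaring it amounts to the associativity of $m$, and $\delta^2 = 0$ follows from the usual cancellation of adjacent and nested products of consecutive slots, together with the boundary contributions on the left and right multiplications. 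These are the classical simplicial and normalized-bar identities, so I would not grind through every sign but would instead indicate the pairing of cancelling terms.

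Next I would establish the anticommutativity $d\delta + \delta d = 0$, which expresses exactly the Leibniz rule~\eqref{eq:L}: when $d$ and $\delta$ act in the two orders on a cochain, the discrepancy between differentiating a product $a_i a_{i+1}$ and differentiating the factors separately is governed by \eqref{eq:L}, and the chosen signs make these discrepancies cancel in pairs. This is the step where the precise exponents $(-1)^{p-1+s_i}$ and $(-1)^{(p-1)(\overline{a_1}-1)+\overline{a_1}}$ matter most, since one must track how applying $d_A$ to an input commutes past the reindexing caused by a multiplication $\delta$ performs. Having checked these three identities, it follows formally that $\p := d + \delta$ satisfies $\p^2 = d^2 + (d\delta + \delta d) + \delta^2 = 0$, so $(C^\bullet(A),\p)$ is a genuine dg $k$-module.

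For the final assertion, I would recognize $(C^\bullet(A),\p)$ as the (completed, normalized) Hochschild cochain complex of $A$ with coefficients in $A$. The plan is to exhibit the standard bar resolution: the two-sided bar construction $\mathrm{Bar}(A) = A \otimes (TA) \otimes A$ is a semifree resolution of $A$ as a dg $A^e$-module, and applying $\Hom_{A^e}(-,A)$ to it yields precisely the complex $\prod_{n\ge 0}\G r_k((TA)^{\otimes n},A)$ with differential the sum of the internal part $d$ and the bar part $\delta$ described above. Since the bar resolution is semifree (hence cofibrant/K-projective over $A^e$), the $\Hom$-complex it produces computes $\RR\Hom_{A^e}(A,A)$, and the resulting quasi-isomorphism is the claimed isomorphism in the derived category of dg $k$-modules. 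The main obstacle I anticipate is not the conceptual identification but the sign management: matching the signs in the paper's explicit formulas for $d$ and $\delta$ with the Koszul signs produced intrinsically by the differential on $\mathrm{Bar}(A)$ and by the evaluation pairing. I would handle this by fixing the Koszul sign convention at the outset, computing the induced differential on $\Hom_{A^e}(\mathrm{Bar}(A),A)$ once, and then reading off that it agrees with $d + \delta$, so that the map sending an $A^e$-linear morphism to its restriction on the cogenerators $1 \otimes Ta_1 \otimes \cdots \otimes Ta_n \otimes 1$ is an isomorphism of complexes.
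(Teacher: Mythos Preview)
The paper does not actually prove this proposition: it is introduced with the sentence ``The following proposition is well-known'' and no argument is given. Your sketch is the standard textbook proof (direct sign-checking for $d^2=\delta^2=d\delta+\delta d=0$, then identification with $\Hom_{A^e}(\mathrm{Bar}(A),A)$ for the semifree bar resolution), and it is correct; the only minor slip is the word ``normalized'', since the paper uses $(TA)^{\otimes n}$ with $TA$ the suspension of $A$, not of $A/k$, but this does not affect the argument.

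One remark worth making: although the paper offers no proof, the discussion that follows the proposition provides a slicker route to the first half. There it is observed that $d=[\m_A^1,-]_G$, $\delta=[\m_A^2,-]_G$, $\p=[\m_A,-]_G$, and that $[\m_A,\m_A]_G=0$ decomposes into $[\m_A^1,\m_A^1]_G=0$, $[\m_A^1,\m_A^2]_G=0$, $[\m_A^2,\m_A^2]_G=0$, equivalent respectively to $d_A^2=0$, the Leibniz rule, and associativity. The graded Jacobi identity for $[-,-]_G$ then gives $d^2=0$, $d\delta+\delta d=0$, $\delta^2=0$ in one stroke, bypassing the term-by-term sign bookkeeping you outline. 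This is not presented as a proof in the paper, but it is the conceptual explanation the paper is pointing toward, and you might prefer it to the explicit cancellation argument.
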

\begin{defn}
The dg $k$-module $(C^\bullet(A),\partial)$ is called the 
Hochschild cochain complex of $A$, whose cohomology $H^\bullet(C^\bullet(A),\partial)$ 
is denoted by $H\!H^\bullet (A)$ and is called the Hochschild cohomology of $A$.
Denote by $\T^\bullet_{poly}(A)$ the graded $k$-module $H^\bullet(C^\bullet(A),\delta)$.
\end{defn}
For $f=(f_n)_{n\ge 0}\in C^p(A)$ and $g= (g_n)_{n\ge 0}\in C^q(A)$, one can define the product 
$f\circ g =\left((f\circ g)_{n}\right)_{n\ge 0}\in C^{p+q}(A)$ by 
\begin{multline*}
(f\circ  g)_n(Ta_1\otimes \dots\otimes Ta_{n} )\\
:=
\sum_{i=0}^n
(-1)^{q\cdot s_i}
f_i(Ta_1\otimes \dots \otimes Ta_{i})g_{n-i}(Ta_{i+1}\otimes \dots \otimes Ta_{n}).
\end{multline*}
It is also known that for $f\in C^p(A)$ and $g\in C^q(A)$ one can define 
the {\it Gerstenhaber bracket} $[f,g]_G\in C^{p+q-1}(A)$ by
\begin{equation}
[f,g]_G:=f\circ_{-1} g-(-1)^{(p-1)(q-1)}g\circ_{-1}f,
\end{equation}
where $f\circ_{-1}g =\left((f\circ_{-1}g)_{n}\right)_{n\ge 0}\in C^{p+q-1}(A)$ is given by 
\begin{multline*}
(f\circ_{-1}g)_n(Ta_1\otimes\dots\otimes Ta_{n})\\
 := \sum_{i=1}^n\sum_{j=1}^{n}(-1)^{(q-1)s_{i-1} }f_{n-j+1}(Ta_1\otimes \dots \otimes Ta_{i-1}\\
\otimes T\left(g_j(Ta_{i}\otimes \dots\otimes Ta_{i+j-1})\right)\otimes Ta_{i+j}\otimes \dots \otimes Ta_n).
\end{multline*}
We collect some basic properties of $\T^\bullet_{poly}(A)$ and $H\!H^\bullet(A)$, which are well-known facts or 
follow from a straight forward calculation.
\begin{prop}
The product $\circ $ on $C^\bullet(A)$ induces structures of graded commutative $k$-algebras on 
$H\!H^\bullet(A)$ and $\T^\bullet_{poly}(A)$ whose unit elements are given by the cohomology classes of $1_A\in C^0(A)$. 
The Gerstenharber bracket $[-,-]_G$ induces structures of graded Lie algebras on 
$HH^{\bullet+1}(A)$ and $\T^{\bullet+1}_{poly}(A)$. 
\end{prop}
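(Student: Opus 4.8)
The statement is the assertion that the Hochschild cochains of $A$ carry, up to the standard degree shift, the structure of a \emph{brace algebra}, so that their cohomology is a Gerstenhaber algebra; the plan is to verify the relevant cochain-level identities and then to check that both $\p=d+\delta$ and $\delta$ are graded derivations of the two operations $\circ$ and $[-,-]_G$, whence the structures descend to $H\!H^\bullet(A)=H^\bullet(C^\bullet(A),\p)$ and to $\T^\bullet_{poly}(A)=H^\bullet(C^\bullet(A),\delta)$ respectively.

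I would begin with the unital commutative algebra structure. The element $1_A\in C^0(A)$ is the cochain $1_k\mapsto 1_A$ whose higher components vanish; reading off the definitions of $d$ and $\delta$ one checks directly that $\p 1_A=0$ and $\delta 1_A=0$ (using $d_A(1_A)=0$ and a one-line cancellation in the $n=1$ term of $\delta$), and from the definition of $\circ$ one sees that $1_A\circ f=f=f\circ 1_A$ on the nose, so the class of $1_A$ is a strict two-sided unit. Next I would establish the Leibniz rule
\[
\p(f\circ g)=(\p f)\circ g+(-1)^{p}\,f\circ(\p g),\qquad f\in C^p(A),
\]
together with the identical identity for $\delta$ in place of $\p$; each is a term-by-term computation that uses the Leibniz rule~\eqref{eq:L} for $d_A$ and the explicit formulas for $d$, $\delta$, $\circ$. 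It is enough to verify these identities separately for $d$ and for $\delta$, which simultaneously supplies the $\delta$-version needed for $\T^\bullet_{poly}(A)$. Once these are in hand, $\circ$ passes to cohomology, and because $\circ$ is already associative on cochains (the nested sums simply regroup), both $H\!H^\bullet(A)$ and $\T^\bullet_{poly}(A)$ become associative unital $k$-algebras.

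The substantive point is \emph{graded commutativity}, which fails on cochains and holds only after passing to cohomology; here $\circ_{-1}$ provides the homotopy. Concretely I would prove a cochain-level identity of the form
\[
f\circ g-(-1)^{pq}\,g\circ f=\p(f\circ_{-1}g)-(\p f)\circ_{-1}g-(-1)^{p-1}f\circ_{-1}(\p g),
\]
for $f\in C^p(A)$ and $g\in C^q(A)$ (with the parallel identity for $\delta$), the precise signs to be pinned down in the course of the computation. Evaluating both sides on $Ta_1\otimes\cdots\otimes Ta_n$ and matching the terms according to which inputs are fed to $f$ and which to $g$ is the crux. Granting it, for two cocycles $f,g$ the difference $f\circ g-(-1)^{pq}g\circ f$ is a coboundary, which is exactly graded commutativity of the induced product on cohomology.

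Finally, for the Lie structure I would record that $\circ_{-1}$ is a graded \emph{right pre-Lie} product, i.e.\ that its associator
\[
(f\circ_{-1}g)\circ_{-1}h-f\circ_{-1}(g\circ_{-1}h)
\]
is graded symmetric under interchanging $g$ and $h$ with the sign $(-1)^{(q-1)(r-1)}$, where $g\in C^q(A)$ and $h\in C^r(A)$. As Gerstenhaber observed, this right-symmetry forces the commutator $[f,g]_G=f\circ_{-1}g-(-1)^{(p-1)(q-1)}g\circ_{-1}f$ to satisfy graded antisymmetry and the graded Jacobi identity \emph{with respect to the shifted degree}, which is why the brackets are defined on $H\!H^{\bullet+1}(A)$ and $\T^{\bullet+1}_{poly}(A)$; it then remains to check that $\p$ (resp.\ $\delta$) is a graded derivation of $[-,-]_G$, so that the bracket descends. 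I expect the main obstacle throughout to be the bookkeeping of the Koszul signs introduced by the internal differential $d_A$: the analogous identities are classical for an ordinary associative algebra, but in the dg setting every formula acquires extra $d$-terms, and the homotopy-commutativity identity above is the point at which a sign slip is both easiest to commit and hardest to detect.
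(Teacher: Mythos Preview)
Your proposal is correct and follows the classical Gerstenhaber route (pre-Lie structure of $\circ_{-1}$, homotopy commutativity of the cup product via $\circ_{-1}$, compatibility with the differentials). The paper does not actually supply a proof of this proposition: it is introduced as a collection of ``well-known facts or follow from a straight forward calculation'', with a reference to Gerstenhaber's original paper \cite{ger:1}, and no argument is written out. So there is nothing substantive to compare against; your sketch is precisely the standard argument the paper is implicitly invoking.

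One small remark on your outline: you do not need to check separately that $\p$ and $\delta$ are derivations of $[-,-]_G$. Once the graded Jacobi identity for $[-,-]_G$ is established at the cochain level (via the pre-Lie property), the paper's observation $\p f=[\m_A,f]_G$ and $\delta f=[\m_A^2,f]_G$ makes this automatic. Also, your ``parallel identity for $\delta$'' is exactly Gerstenhaber's original cochain identity; the version with $\p$ then follows because $d$ is a genuine derivation of $\circ_{-1}$ (the internal differential $d_A$ only acts on inputs and outputs), so the $d$-contribution to the right-hand side vanishes. This slightly streamlines the sign bookkeeping you flag as the main hazard.
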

Moreover, it turns out that the tuple $(\T^\bullet_{poly}(A),d,\circ ,[-,-]_G)$ is a differential Gerstenhaber algebra (cf. \cite{ger:1}).
Namely, we also have
\[
\left[X,Y\circ  Z\right]_G=\left[X,Y\right]\circ Z
+(-1)^{(\overline{X}+1)\cdot \overline{Y}}Y\circ \left[X,Z\right]_G,\quad X,Y,Z\in \T^\bullet_{poly}(A).
\]
Let $\m_A=(\m_A^p)_{p\ge 0}$ be the element in $C^2(A)$ where 
$\m_A^p:=0$ if $p\ne 1,2$ and $\m_A^1(Ta_1):=d_Aa_1$, $\m_A^2(Ta_1\otimes Ta_2):=(-1)^{\overline{a_1}}a_1a_2$.
It is well-known that $\left[\m_A,\m_A\right]_G=0$ since, if we write $\m_A=\m^1_A+\m^2_A$, 
\[
\left[\m_A,\m_A\right]_G=0\Leftrightarrow
\begin{cases}
\left[\m^1_A,\m^1_A\right]_G=0\\
\left[\m^1_A,\m^2_A\right]_G=0\\
\left[\m^2_A,\m^2_A\right]_G=0
\end{cases} 
\Leftrightarrow
\begin{cases}
d_A^2=0\\
\text{the Leibnitz rule~\eqref{eq:L}}\\
\text{the associativity}
\end{cases}.
\]
It is also important to note that $\p f=\left[\m_A,f\right]_G$, $d f=\left[\m^1_A,f\right]_G$ and $\delta f=\left[\m^2_A,f\right]_G$ for $f\in C^\bullet(A)$.
Let $\f_A$ be the cohomology class of $\m_A$ in $\T^2_{poly}(A)$, 
which is actually the same as the one of $\m^1_A$ in $\T^2_{poly}(A)$.
We will see later that, for a non-negatively graded connected smooth compact Calabi--Yau dg $k$-algebra $A$,
the element $\f_A$ is the dg analogue of a weighted homogeneous polynomial with an isolated singularity,
which is the initial data for the original Saito's theory of primitive forms.
Indeed, by a direct calculation, we obtain the following dg analogue of the ``Euler's identity" for $\f_A$.
\begin{prop}
For $X\in \T^\bullet_{poly}(A)$, we have
$d X=\left[\f_A,X\right]_G$.
In particular, we have 
\begin{equation}\label{eq:f and deg}
\f_A=\left[{\mathfrak deg}_A,\f_A\right]_G,
\end{equation}
where ${\mathfrak deg}_A$ is the cohomology class of the element $\deg_A=(\deg_A^p)_{p\ge 0}\in C^1(A)$ in $\T^1_{poly}(A)$ 
defined by $\deg_A^p:=0$ if $p\ne 1$ and $\deg_A^1(Ta_1):=\overline{a_1}\cdot a_1$.
\end{prop}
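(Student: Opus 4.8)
The plan is to deduce both assertions from the chain-level identity $df=[\m_A^1,f]_G$ recorded just above, together with the fact that $\f_A$ is represented by $\m_A^1$ in $\T^\bullet_{poly}(A)=H^\bullet(C^\bullet(A),\delta)$; here $\m_A^1$ is a $\delta$-cocycle because $\delta f=[\m_A^2,f]_G$ and $[\m_A^1,\m_A^2]_G=0$ by the Leibniz rule. First I would note that, since $d\delta+\delta d=0$, the operation $d=[\m_A^1,-]_G$ carries $\delta$-cocycles to $\delta$-cocycles and $\delta$-coboundaries to $\delta$-coboundaries, so it descends to the induced differential $d$ on $\T^\bullet_{poly}(A)$; likewise $[-,-]_G$ descends, exactly as asserted in the proposition furnishing the graded Lie structure on $\T^{\bullet+1}_{poly}(A)$. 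Choosing a $\delta$-cocycle representative $\tilde X$ of a class $X$, the bracket $[\f_A,X]_G$ is then represented by $[\m_A^1,\tilde X]_G=d\tilde X$, which represents $dX$; this gives $dX=[\f_A,X]_G$ at once.

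For the ``Euler identity'' \eqref{eq:f and deg} I would specialize the general identity to $X={\mathfrak deg}_A$, obtaining $d\,{\mathfrak deg}_A=[\f_A,{\mathfrak deg}_A]_G$, and then evaluate the left-hand side on the chain level through $d\,\deg_A=[\m_A^1,\deg_A]_G=\m_A^1\circ_{-1}\deg_A-\deg_A\circ_{-1}\m_A^1$. The decisive simplification is that $\m_A^1$ and $\deg_A$ are each concentrated in their $n=1$ components, so only the terms with $n=i=j=1$ survive in each $\circ_{-1}$-product and every Koszul sign $(-1)^{(q-1)s_{i-1}}$ reduces to $(-1)^0=1$. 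I expect to find $(\m_A^1\circ_{-1}\deg_A)(Ta_1)=d_A(\overline{a_1}\,a_1)=\overline{a_1}\,d_Aa_1$ and $(\deg_A\circ_{-1}\m_A^1)(Ta_1)=\overline{d_Aa_1}\,d_Aa_1=(\overline{a_1}+1)\,d_Aa_1$, whose difference is $-d_Aa_1=-\m_A^1(Ta_1)$. Hence $d\,\deg_A=-\m_A^1$, that is $d\,{\mathfrak deg}_A=-\f_A$ in $\T^2_{poly}(A)$. Finally, the graded antisymmetry $[\f_A,{\mathfrak deg}_A]_G=-(-1)^{(\overline{\f_A}-1)(\overline{{\mathfrak deg}_A}-1)}[{\mathfrak deg}_A,\f_A]_G$ with total degrees $\overline{\f_A}=2$ and $\overline{{\mathfrak deg}_A}=1$ yields $[\f_A,{\mathfrak deg}_A]_G=-[{\mathfrak deg}_A,\f_A]_G$, so combining with $d\,{\mathfrak deg}_A=-\f_A$ gives $-\f_A=-[{\mathfrak deg}_A,\f_A]_G$, i.e. $\f_A=[{\mathfrak deg}_A,\f_A]_G$.

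The step I expect to require the most care is the sign and grading bookkeeping. One must consistently use the total Hochschild degree $n+p$ for a map in $\G r_k((TA)^{\otimes n},A)^p$ in the antisymmetry exponent $(p-1)(q-1)$, while the internal degree $p$ alone governs the signs $(-1)^{(q-1)s_{i-1}}$ built into $\circ_{-1}$; conflating the two would corrupt the crucial cancellation. It is also worth checking explicitly that no components with $n>1$ contribute to $[\m_A^1,\deg_A]_G$, so that the computation genuinely collapses to a single evaluation. Once these conventions are fixed the arithmetic is short, the only substantive input being the degree shift $\overline{d_Aa_1}=\overline{a_1}+1$, which is what produces the extra $-d_Aa_1$ and hence the sign making the Euler identity come out as $\f_A$ rather than $-\f_A$.
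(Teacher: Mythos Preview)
Your argument is correct and is precisely the ``direct calculation'' the paper alludes to without spelling out: the paper gives no proof beyond that phrase, and your route---pass $df=[\m_A^1,f]_G$ to $\delta$-cohomology for the first assertion, then compute $[\m_A^1,\deg_A]_G=-\m_A^1$ on the single surviving $n=1$ component and conclude via graded antisymmetry---is exactly what that calculation amounts to. One small terminological remark: in the paper's convention the $C^\bullet$-degree of $f\in\G r_k((TA)^{\otimes n},A)^p$ is simply $p$ (the shift $T$ has already absorbed the tensor length), so your caution about a separate ``$n+p$'' is unnecessary here; but since you correctly use $\overline{\f_A}=2$ and $\overline{{\mathfrak deg}_A}=1$, this does not affect the argument.
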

The ``Euler's identity"~\eqref{eq:f and deg} means that $\f_A$ is of degree one with respect to another grading 
structure on $C^\bullet(A)$ whose origin is the $\ZZ$-grading on $A$. 
This completely agrees with the fact that a weighted homogeneous polynomial 
in the original Saito's theory is considered as of degree two from the homological view point, 
e.g. when we discuss matrix factorizations, 
and of degree one for the compatibility with exponents.
%%%%%%%%%%%%%%%%%%%%%%%%%%%%%%%%%%%%%%%%%%%%%%%%%%%%%%%%%%%%%%%%%%%%%%%%%
\subsection{Hochishild homology}
For a dg $k$-algebra $A$, set
\begin{equation}
C_\bullet(A):=\coprod_{n\ge 0}A \otimes_k  (TA)^{\otimes n}.
\end{equation}
We shall denote an element of $A \otimes_k  (TA)^{\otimes n}$ by $a_0$ if $n=0$ and 
$a_0\otimes Ta_1\otimes \dots \otimes Ta_{n}$ if $n\ne 0$.
Define a $k$-linear map $d:C_\bullet(A)\longrightarrow C_{\bullet-1}(A)$ by $d(a_0):=d_A a_0$ and, for $n\ge 1$, 
\begin{multline*}
d(a_0\otimes Ta_1\otimes\dots \otimes Ta_n )
:=d_Aa_0\otimes Ta_1\otimes \dots \otimes Ta_{n}\\ 
+\sum_{i=1}^n(-1)^{s'_i}
a_0\otimes Ta_1\otimes \dots \otimes Ta_{i-1}\otimes T(d_Aa_i)\otimes 
Ta_{i+1}\otimes \dots \otimes Ta_n,
\end{multline*}
where $s'_i:=\sum_{m=0}^{i}(\overline{a_m}-1)$. 
Define another $k$-linear map $\delta:C_\bullet(A)\longrightarrow C_{\bullet-1}(A)$ by
$\delta(a_0):=0$ and, for $n\ge 1$, 
\begin{multline*}
\delta(a_0\otimes Ta_1\otimes \dots \otimes Ta_n )
:=(-1)^{\overline{a_0}}a_0a_1\otimes Ta_2\otimes \dots\otimes Ta_n\\
-\sum_{i=1}^{n-1}(-1)^{s'_i}
a_0\otimes Ta_1\otimes\dots \otimes Ta_{i-1}\otimes T(a_ia_{i+1})\otimes Ta_{i+2}\otimes 
\dots \otimes Ta_n\\
+(-1)^{\overline{a_n}+(\overline{a_n}-1)s'_{n-1}}
a_na_0\otimes Ta_1\otimes \dots \otimes Ta_{n-1}.
\end{multline*}
Define a $k$-linear map $B:C_\bullet(A)\longrightarrow C_{\bullet+1}(A)$ called the {\em Connes differential} by 
\begin{align*}
&B(a_0\otimes Ta_1\dots \otimes Ta_n )\\
:=&{\rm id}_A\otimes Ta_0\otimes \dots \otimes Ta_n
-(-1)^{\overline{a_0}-1}a_0\otimes T({\rm id}_A)\otimes Ta_1\otimes \dots \otimes Ta_n\\
+&\sum_{i=1}^{n}(-1)^{\overline{a_n}-1+s''_i}
{\rm id}_A\otimes Ta_i\otimes \dots \otimes Ta_n\otimes Ta_0\otimes \dots \otimes Ta_{i-1}\\
-&\sum_{i=1}^{n}(-1)^{\overline{a_i}-1+s''_i}
a_i\otimes T({\rm id}_A)\otimes Ta_{i+1}\dots \otimes Ta_n\otimes Ta_0\otimes \dots \otimes Ta_{i-1},
\end{align*}
where $s''_i:=\left(\sum_{m=0}^{i-1}(\overline{a_m }-1)\right)\left(\sum_{m=i}^{n-1}(\overline{a_m }-1)\right)$.
The following proposition is also well-known.
\begin{prop}
The $k$-linear maps
$d, \delta, B$ define differentials on $C_\bullet(A)$ and 
satisfy $d\delta+\delta d=0,\ dB+Bd=0,\ \delta B+B\delta=0$.
Moreover, the dg $k$-module $(C_\bullet(A),\p:=d+\delta)$ is isomorphic to 
the dg $k$-module $A\otimes^\LL_{A^e} A$ in the derived category of dg $k$-modules.
\end{prop}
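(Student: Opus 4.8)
The final proposition asserts three facts about the chain-level operators $d,\delta,B$ on $C_\bullet(A)$: (i) each of $d,\delta,B$ squares to zero and $d,\delta,B$ pairwise anticommute; (ii) $(C_\bullet(A),\partial=d+\delta)$ is the Hochschild chain complex and computes $A\otimes^\LL_{A^e}A$ in the derived category of dg $k$-modules. Let me sketch a proof.

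Let me think about the structure here. The plan is to treat the two halves separately, since they are logically independent. The first half is a purely algebraic identity-check at the level of explicit formulas, while the second half is a standard resolution/quasi-isomorphism argument.

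**First half: the operator identities.** The cleanest route is \emph{not} to verify all six identities ($d^2=0$, $\delta^2=0$, $B^2=0$, $d\delta+\delta d=0$, $dB+Bd=0$, $\delta B+B\delta=0$) by brute force, but to reduce them to known facts. First I would observe that $d$ is induced entirely by the internal differential $d_A$ of $A$ (extended as a graded derivation over the tensor factors with the sign conventions dictated by $s_i'$), while $\delta$ is the usual Hochschild boundary built from the multiplication $\m_A^2$; the signs in the definitions are precisely those making $d$ and $\delta$ the two pieces of the total differential on the bicomplex associated to the dg algebra $A$. Thus $d^2=0$ follows from $d_A^2=0$, $\delta^2=0$ is the classical simplicial identity (the alternating faces of the cyclic bar construction square to zero), and $d\delta+\delta d=0$ expresses the Leibniz rule~\eqref{eq:L} compatibly with the face maps — exactly the dual, on the homology side, of the cohomology computation already recorded via the Gerstenhaber formalism $\partial=[\m_A,-]_G$ and $[\m_A,\m_A]_G=0$. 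For $B$, I would invoke the standard Connes cyclic theory: $B^2=0$ and $bB+Bb=0$ are Connes' relations, and here one must only check that the sign bookkeeping through $s_i''$ is consistent with the grading on $A$, so that $dB+Bd=0$ and $\delta B+B\delta=0$ hold. I would verify these by a direct (but routine) term-by-term cancellation, matching each term produced by one composite against its partner in the other.

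**Second half: identification with $A\otimes^\LL_{A^e}A$.** This is the conceptual content and follows the same template as the analogous cohomology statement proved earlier in the excerpt (the proposition identifying $(C^\bullet(A),\partial)$ with $\RR\Hom_{A^e}(A,A)$). The idea is that the complex $C_\bullet(A)=\coprod_{n\ge 0}A\otimes(TA)^{\otimes n}$ with differential $\partial=d+\delta$ is exactly $A\otimes_{A^e}\mathrm{Bar}(A)$, where $\mathrm{Bar}(A)$ is the (normalized or reduced) bar resolution of $A$ as a dg $A^e$-module. First I would recall that the bar construction $\mathrm{Bar}(A)=\bigoplus_n A\otimes(TA)^{\otimes n}\otimes A$ with its bar differential is a semifree (hence cofibrant) resolution of the diagonal bimodule $A$ over $A^e$. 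Then I would compute $A\otimes_{A^e}\mathrm{Bar}(A)$: the two outer copies of $A$ in each bar term are collapsed by the $A^e$-action against the left factor $A$, cyclically identifying them, so the result is precisely $\coprod_n A\otimes(TA)^{\otimes n}$ with the induced differential, and one checks that the induced differential is $d+\delta$ with exactly the signs prescribed — the $\delta$-terms at the two ends acquire the cyclic signs $(-1)^{\overline{a_0}}$ and $(-1)^{\overline{a_n}+(\overline{a_n}-1)s'_{n-1}}$ that appear in the stated formula. Since $\mathrm{Bar}(A)$ is a cofibrant replacement of $A$, the complex $A\otimes_{A^e}\mathrm{Bar}(A)$ represents $A\otimes^\LL_{A^e}A$, giving the claimed isomorphism in the derived category of dg $k$-modules.

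**Main obstacle.** The conceptual steps are standard, so the real labor — and the place where errors hide — is the sign bookkeeping, particularly for the Connes differential $B$ and its (anti)commutation with $d$ and $\delta$. The exponents $s_i'$ and especially $s_i''=\bigl(\sum_{m=0}^{i-1}(\overline{a_m}-1)\bigr)\bigl(\sum_{m=i}^{n-1}(\overline{a_m}-1)\bigr)$ encode Koszul signs for cyclically permuting graded tensor factors, and verifying $\delta B+B\delta=0$ requires carefully pairing each of the many terms generated by the composite against its sign-reversed counterpart; the degenerate terms involving $T(\mathrm{id}_A)$ must cancel among themselves. I expect this to be the hard part, whereas the derived-category identification is essentially a transcription of the cohomology case with $\RR\Hom_{A^e}(A,-)$ replaced by $A\otimes^\LL_{A^e}(-)$ applied to the same bar resolution.
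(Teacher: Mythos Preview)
The paper does not actually prove this proposition: it is introduced with ``The following proposition is also well-known'' and no proof is given. Your sketch is therefore not to be compared against an argument in the paper but rather against the standard literature proofs (e.g.\ Loday, \emph{Cyclic Homology}, or Keller's surveys), and it matches them: the operator identities $d^2=\delta^2=B^2=0$ and the three anticommutations are the classical mixed-complex relations for the Hochschild complex of a dg algebra, and the identification of $(C_\bullet(A),\partial)$ with $A\otimes^\LL_{A^e}A$ is exactly the observation that $C_\bullet(A)=A\otimes_{A^e}\mathrm{Bar}(A)$ with $\mathrm{Bar}(A)$ a semifree $A^e$-resolution of $A$. Your identification of the sign bookkeeping for $B$ as the only genuinely laborious point is accurate; everything else is, as the paper says, well-known.
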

\begin{defn}
The dg $k$-module $(C_\bullet(A),\partial)$ is called the Hochschild chain complex
of $A$, whose homology $H_{\bullet}(C_\bullet(A),\partial)$ is denoted by $H\!H_\bullet (A)$ and is 
called the Hochschild homology of $A$.
Denote by $\Omega_\bullet(A)$ the graded $k$-module $H_{\bullet}(C_\bullet(A),\delta)$.
\end{defn}
The pair $(\T^\bullet_{poly}(A),\Omega_\bullet(A))$ admits a structure of a {\em calculus algebra} 
(cf. Dolgushev--Tamarkin--Tsygan, Definition~3 in \cite{dtt:1}), namely, 
we have the algebraic structures on $(\T^\bullet_{poly}(A),\Omega_\bullet(A))$ in the next
proposition which directly follows, just by forgetting the differential $d$, from the result by Daletski--Gelfand--Tsygan \cite{dgt:1} (see also Dolgushev--Tamarkin--Tsygan, Section~3 in \cite{dtt:2}). 
To state the result, recall two $k$-linear maps $\iota_f, \L_f :C_\bullet(A)\longrightarrow C_\bullet(A)$ 
called the {\it contraction} and the {\it Lie derivative}.
For $f\in \G r_k  ((TA)^{\otimes p},A)$, they are defined in the following way$:$
\begin{align*}
&\iota_f(a_0\otimes Ta_1\dots \otimes Ta_n )\\
:=&\pm a_0f(Ta_{1}\otimes \dots \otimes Ta_p)\otimes Ta_{p+1}\dots \otimes Ta_{n},\\
&\L_f(a_0\otimes Ta_1\dots \otimes Ta_n )\\
:=
&f(Ta_0\otimes \dots \otimes Ta_{p-1} )\otimes \dots \otimes Ta_{n}\\
&+\sum_{i=1}^{n-p+1}
\pm a_0\otimes Ta_1\dots \otimes  Tf(Ta_{i}\otimes \dots \otimes Ta_{i+p-1})\otimes \dots \otimes Ta_{n}\\
&+\sum_{i=n-p+1}^{n-1}
\pm f(Ta_{i+1}\otimes \dots \otimes Ta_n\otimes Ta_0\otimes \cdots )\otimes \dots \otimes Ta_{i},
\end{align*}
where we omit signs since we do not need their explicit expressions later.
\begin{prop}\label{prop:iL}
The $k$-linear maps $\iota$ and $\L$ induce morphisms of graded $k$-modules
\[
i:\T^\bullet_{poly}(A)\longrightarrow \G r_k (\Omega_\bullet(A),\Omega_\bullet(A)),\quad X\mapsto i_X,
\]
\[
L:\T^{\bullet}_{poly}(A)\longrightarrow T^{-1}\G r_k (\Omega_\bullet(A),\Omega_\bullet(A)),\quad X\mapsto L_X,
\]
satisfying, for all $X,Y\in\T^\bullet_{poly}(A)$,
\[
i_X i_Y=i_{X\circ  Y},\  \left[L_X,L_Y\right]=L_{[X,Y]_G},
\]
\[
L_X i_Y+(-1)^{\overline{X}} i_Y L_X=L_{X\circ  Y},\ \left[ i_X, L_Y\right]=i_{[X,Y]_G},
\]
\[
\left[B, i_X\right]=-L_{X},\ \left[B,L_X\right]=0,\ L_{\f_A}=-d.
\]
\end{prop}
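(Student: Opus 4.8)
The plan is to separate the statement into two parts: the six purely algebraic calculus identities, which I will obtain from the theorem of Daletski--Gelfand--Tsygan applied to the underlying graded associative algebra, and the single dg-specific relation $L_{\f_A}=-d$, which I will prove by a direct chain-level computation. Before either, I must check that $\iota$ and $\L$ descend to the $\delta$-(co)homologies so that $i$ and $L$ make sense. On cochains $\delta f=[\m^2_A,f]_G$ as recorded above, and on chains $\delta$ coincides up to sign with the Lie derivative $\L_{\m^2_A}$ along the multiplication cochain. The chain-level homotopy identities of the noncommutative calculus then give
\[
[\delta,\iota_f]=\pm\,\iota_{\delta f},\qquad [\delta,\L_f]=\pm\,\L_{\delta f},
\]
so that a $\delta$-cocycle $f$ produces $\delta$-chain maps $\iota_f,\L_f$ and a $\delta$-coboundary $f=\delta g$ produces $\delta$-null-homotopic operators; consequently $\iota,\L$ induce well-defined $i,L$ on $\Omega_\bullet(A)=H_\bullet(C_\bullet(A),\delta)$ depending only on the class of $f$ in $\T^\bullet_{poly}(A)=H^\bullet(C^\bullet(A),\delta)$. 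The same compatibility for $[B,\iota_f]$ and $[B,\L_f]$ guarantees that the $B$-relations also pass to cohomology.

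For the six identities $i_Xi_Y=i_{X\circ Y}$, $[L_X,L_Y]=L_{[X,Y]_G}$, $L_Xi_Y+(-1)^{\overline X}i_YL_X=L_{X\circ Y}$, $[i_X,L_Y]=i_{[X,Y]_G}$, $[B,i_X]=-L_X$ and $[B,L_X]=0$, I would invoke \cite{dgt:1} directly. These are exactly the axioms of a calculus algebra for the graded associative $k$-algebra obtained from $A$ by forgetting the differential $d_A$, with $\delta$ in the role of the Hochschild differential and $B$ in the role of the Connes differential. Since that result produces a calculus structure on the Hochschild (co)homology of any associative algebra, it yields all six relations verbatim on $(\T^\bullet_{poly}(A),\Omega_\bullet(A))$; this is the meaning of ``forgetting the differential $d$''.

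It remains to establish $L_{\f_A}=-d$. Here $d$ descends to an operator on $\Omega_\bullet(A)$ because $d\delta+\delta d=0$, and $\f_A$ is the class in $\T^2_{poly}(A)$ of $\m_A=\m^1_A+\m^2_A$, which equals the class of $\m^1_A$. Using $\m^1_A$ as a $\delta$-cocycle representative, I would compute $\L_{\m^1_A}$ straight from the definition: since $\m^1_A\in\G r_k(TA,A)$ has arity $p=1$, the last cyclic sum in the formula for $\L_f$ is empty and the expression collapses to
\[
\L_{\m^1_A}(a_0\otimes Ta_1\otimes\dots\otimes Ta_n)
=(d_Aa_0)\otimes Ta_1\otimes\dots\otimes Ta_n
+\sum_{i=1}^n\pm\,a_0\otimes\dots\otimes T(d_Aa_i)\otimes\dots\otimes Ta_n,
\]
which is precisely $-d$ at the chain level with the conventions of the statement. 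Passing to $\delta$-homology and using $\f_A=[\m^1_A]$ then gives $L_{\f_A}=-d$ on $\Omega_\bullet(A)$.

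The main obstacle I anticipate is entirely one of signs and Koszul conventions: reconciling the signs suppressed in the definitions of $\iota_f$ and $\L_f$ with those in the calculus axioms of \cite{dgt:1} over a $\ZZ$-graded (hence super) base, and fixing the exact sign in the chain identity $\L_{\m^1_A}=-d$. The conceptual input is small --- the theorem of Daletski--Gelfand--Tsygan for the associative algebra underlying $A$, together with the observation that the dg datum $d_A$ is encoded as the arity-one part $\m^1_A$ of the Maurer--Cartan element $\m_A$ --- so that the entire proof reduces to careful bookkeeping.
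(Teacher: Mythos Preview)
Your proposal is correct and matches the paper's approach. The paper does not give a standalone proof; immediately before the proposition it simply asserts that the result ``directly follows, just by forgetting the differential $d$, from the result by Daletski--Gelfand--Tsygan \cite{dgt:1},'' which is precisely your step for the six calculus identities. Your additional explicit verification that $\iota$ and $\L$ descend along $\delta$ (via $\delta=\L_{\m^2_A}$ on chains and $\delta f=[\m^2_A,f]_G$ on cochains) and your direct chain-level computation of $L_{\f_A}=-d$ using the representative $\m^1_A$ merely spell out what the paper leaves implicit; the paper's subsequent remark $[d,i_X]=i_{dX}$, $[d,L_X]=L_{-dX}$ is in the same spirit. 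Your caveat about sign bookkeeping is apt, since the paper itself suppresses the signs in the definitions of $\iota_f$ and $\L_f$.
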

Note that $\G r_k (\Omega_\bullet(A),\Omega_\bullet(A))$ has both a structure of 
a dg $k$-algebra and a dg Lie algebra.
Since we have 
\[
[d, i_X]=i_{dX},\ [d, L_X]=L_{-dX},\quad X\in \T^\bullet_{poly}(A),
\]
the $k$-linear maps $i$ and $L$ define a morphism of dg $k$-algebras 
and a morphism of dg Lie algebras.
Since $A^e$-modules can be considered as dg endo-functors on the derived dg category of 
dg $A$-modules (see T\"oen, Corollary~7.6 in \cite{Toen} for the precise statement), 
there are ``horizontal" and ``vertical" associative product structures on $\RR{\rm Hom}_{A^e}(A,A)$ and 
$\RR{\rm Hom}_{A^e}(A,A)$-module structures on $\RR{\rm Hom}_{A^e}(A^!,A)$ 
(cf. Kashiwara--Schapira, Remark~1.3.4 in \cite{Kas-S}, for endo-functors on ordinary categories).
It turns out that they induce the same structures on $\T^\bullet_{poly}(A)$ and $\Omega_\bullet(A)$, 
which are the product $\circ$ and the map $i$ given above (cf. Kaledin, Lemma~8.1 and the equation~(8.2) in \cite{KaledinTokyo}). 
The equalities in Proposition~\ref{prop:iL}, the ``Cartan calculus",  play an essential role in our story,
which enable us to translate the original Saito's theory for isolated hypersurface singularities 
verbatim into the dg categorical Saito theory.
%%%%%%%%%%%%%%%%%%%%%%%%%%%%%%%%%%%%%%%%%%%%%%%%%%%%%%%%%%%%%%%%%%%%%%%%%
%%%%%%%%%%%%%%%%%%%%%%%%%%%%%%%%%%%%%%%%%%%%%%%%%%%%%%%%%%%%%%%%%%%%%%%%%
\section{Construction of a very good section}
%%%%%%%%%%%%%%%%%%%%%%%%%%%%%%%%%%%%%%%%%%%%%%%%%%%%%%%%%%%%%%%%%%%%%%%%%
\subsection{Formality for certain homotopy calculus algebras}
Let $A$ be a non-negatively graded smooth dg $k$-algebra.
On the algebraic structure on the pair $(C^\bullet(A),C_\bullet(A))$,
the following proposition directly follows from Corollary~1 in \cite{dtt:1}:
\begin{prop}
The pair $(C^\bullet(A),C_\bullet(A))$ has a structure of homotopy calculus algebra.
\end{prop}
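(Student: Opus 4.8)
The plan is to deduce the statement as a special instance of the general structure theorem of Dolgushev--Tamarkin--Tsygan: Corollary~1 in \cite{dtt:1} asserts precisely that the pair formed by the Hochschild cochain and chain complexes of an associative $k$-algebra carries a natural homotopy calculus structure. The whole content of the proof is therefore to match our explicit complexes and operations with theirs, and then to account for the one feature special to the dg setting, namely the internal differential $d_A$. Accordingly I would begin by recalling that, by definition, a homotopy calculus algebra is an algebra over a fixed chain model of the two-coloured Kontsevich--Soibelman operad of framed little discs, whose homology is exactly the calculus operad of Definition~3 in \cite{dtt:1} governing the Cartan identities among the cup product $\circ$, the Gerstenhaber bracket $[-,-]_G$, the contraction $\iota$, the Lie derivative $\L$ and the Connes differential $B$. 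The two colours correspond to the cochain part $C^\bullet(A)$ and the chain part $C_\bullet(A)$.

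The second step is to observe that every generating operation of this operad has already been written down above: the product $\circ$ and the bracket $[-,-]_G$ on $C^\bullet(A)$, the contraction $\iota_f$ and Lie derivative $\L_f$ of $C_\bullet(A)$ along $f\in C^\bullet(A)$, and the Connes differential $B$. The defining formulas for the operadic action are purely combinatorial -- they involve only concatenation, cyclic rotation, and insertion of tensor factors -- and so make sense verbatim for a graded algebra, being insensitive to whether $A$ carries an internal differential. Consequently the very formulas that define the action in the non-dg case of \cite{dtt:1} define an action of the same operad on our pair $(C^\bullet(A),C_\bullet(A))$, since $A$ underlies a graded $k$-algebra.

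The only genuinely new point in the dg generalization is compatibility with the internal differential, encoded by the Maurer--Cartan element $\m^1_A\in C^1(A)$. One must check that the total differentials $\p=d+\delta$ on the two complexes are (co)derivations for each operation of the operad. For the $\delta$-part this is exactly what \cite{dtt:1} establishes. For the $d$-part it follows from the identities recorded before Proposition~\ref{prop:iL}: since $d=[\m^1_A,-]_G$ and $[d,i_X]=i_{dX}$, $[d,L_X]=L_{-dX}$, the operator $d$ acts on both complexes as the operadic adjoint action of $\m^1_A$, so the operad relations themselves force $d$ to be a derivation of every generating operation.

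I expect the main -- indeed essentially the only -- obstacle to be the careful bookkeeping of Koszul signs required to verify that $d$ and $\delta$ assemble into a single differential under which the whole collection of operations becomes a genuine homotopy calculus action. Once the operadic formalism is in place, this reduces to the sign verifications already implicit in \cite{dtt:1}, together with the observation that the internal differential enters only through $\m^1_A$ and is therefore governed by the same relations; hence the homotopy calculus structure on $(C^\bullet(A),C_\bullet(A))$ follows directly.
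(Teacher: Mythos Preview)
Your proposal is correct and follows exactly the paper's route: the statement is obtained directly from Corollary~1 in \cite{dtt:1}, and the paper offers no argument beyond that citation. Your additional discussion of how the internal differential $d_A$ enters through the Maurer--Cartan element $\m^1_A$ is a reasonable elaboration, but it does not constitute a different approach---the paper simply treats the result of \cite{dtt:1} as applying to the dg setting without further comment.
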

We omit the definition of homotopy calculus algebras due to limitations of space since 
we shall not use it in later discussions. 
Based on Theorem~5 in \cite{dtt:1}, we expect the following conjecture$:$
\begin{conj}\label{conj:formality}
As homotopy calculus algebras, $(C^\bullet(A),C_\bullet(A))$ is quasi-isomorphic to 
the calculus algebra $(\T^\bullet_{poly}(A),\Omega_\bullet(A))$.
\end{conj}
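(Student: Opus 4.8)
The plan is to deduce the conjecture from a formality statement at the level of operads, followed by homotopy transfer and a weight argument that rigidifies the resulting structure. By the homotopy calculus structure of the preceding proposition (Corollary~1 in \cite{dtt:1}), the pair $(C^\bullet(A),C_\bullet(A))$ is an algebra over a two-colored dg operad $\mathcal{KS}$ that serves as a chain model for the cyclic (framed) little disks operad acting on a pair, and whose cohomology is the operad $\mathrm{Calc}$ governing honest calculus algebras. The essential input, which is Theorem~5 in \cite{dtt:1}, is the \emph{formality} of $\mathcal{KS}$: a chain of quasi-isomorphisms of dg operads $\mathcal{KS}\simeq\mathrm{Calc}$. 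This rests on the formality of the (framed) little disks operad on the cochain side together with its extension to the chain side, assembled through the cyclic Deligne conjecture.

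Granting operadic formality, I would transfer the $\mathcal{KS}$-structure along the homotopy retractions of $(C^\bullet(A),\delta)$ onto $\T^\bullet_{poly}(A)$ and of $(C_\bullet(A),\delta)$ onto $\Omega_\bullet(A)$, obtaining a transferred homotopy calculus structure on $(\T^\bullet_{poly}(A),\Omega_\bullet(A))$ together with a $\mathcal{KS}$-quasi-isomorphism back to $(C^\bullet(A),C_\bullet(A))$. Throughout, the residual internal differential coming from $d_A$ must be carried along: by Proposition~\ref{prop:iL} it is encoded by the class $\f_A$ via $dX=[\f_A,X]_G$ and $L_{\f_A}=-d$, so the transfer is to be performed in $\mathcal{KS}$-algebras equipped with this distinguished $\f_A$-action, and one checks that the formality maps intertwine it.

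The remaining---and, I expect, principal---difficulty is \emph{strictness}: homotopy transfer produces only a \emph{homotopy} calculus structure on $(\T^\bullet_{poly}(A),\Omega_\bullet(A))$, whereas the conjecture asserts a quasi-isomorphism with the \emph{strict} calculus of Proposition~\ref{prop:iL}, so the higher operations must be shown to vanish up to gauge. For a general smooth algebra this is not automatic; here I would use the extra weight grading induced by the $\ZZ$-grading on $A$, under which $\f_A$ is homogeneous of weight one by the Euler identity~\eqref{eq:f and deg}. Since $A$ is non-negatively graded and connected, all transferred operations are weight-homogeneous and the corresponding deformation-theoretic obstructions live in strictly positive weights, so a purity argument should force them to vanish and supply a homogeneous trivializing gauge. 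The genuinely technical heart---and the reason this is recorded as a conjecture---is the globalization of the local (formal-disc) formality quasi-isomorphisms of Kontsevich and Tsygan--Shoikhet to an arbitrary smooth dg algebra, simultaneously compatibly with $d_A$ and with all of the operations $i$, $L$ and the Connes differential $B$, which would require a Fedosov-type resolution adapted to the graded setting.
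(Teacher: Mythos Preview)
There is nothing to compare against: the paper does \emph{not} prove this statement. It is explicitly labeled as Conjecture~\ref{conj:formality}, and immediately afterwards the paper introduces Assumption~\ref{assum:formality}, which simply \emph{assumes} the conjecture holds for the dg algebra $A$ under consideration. Everything that follows in Sections~3--5 is conditional on this assumption. So your proposal is not an alternative to the paper's argument; it is an attempt to prove something the author leaves open.

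As for the content of your outline: it is a reasonable sketch of the expected strategy, and you yourself identify the genuine obstruction when you call the globalization step ``the reason this is recorded as a conjecture.'' Two points deserve emphasis. First, the operadic formality of $\mathcal{KS}$ (Theorem~5 in \cite{dtt:1}) is stated over a field of characteristic zero for \emph{smooth affine varieties}, and the step from there to an arbitrary non-negatively graded smooth dg $k$-algebra is exactly where the argument is incomplete; invoking a ``Fedosov-type resolution adapted to the graded setting'' names the problem without solving it. Second, your purity/weight argument for forcing the transferred higher operations to vanish is not justified: the fact that $\f_A$ has weight one under $L_{{\mathfrak deg}_A}$ does not by itself place all obstruction classes in strictly positive weight, and the claim that ``a purity argument should force them to vanish'' would need an actual computation of where the relevant Harrison/Gerstenhaber--Schack type obstruction groups sit. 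Until those two steps are carried out, what you have written is a plan, not a proof---which is consistent with the paper's decision to state this as a conjecture rather than a theorem.
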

Note in particular that 
Conjecture~\ref{conj:formality} implies that 
the dg Lie algebra $(C^{\bullet+1}(A),[\m_A,-]_G,[-,-]_G)$ is quasi-isomorphic to 
the dg Lie algebra $(\T^{\bullet+1}_{poly}(A),[\f_A,-]_G,[-,-]_G)$ as an $L_\infty$-algebra.
\begin{assum}\label{assum:formality}
Conjecture~\ref{conj:formality} holds for a non-negatively graded smooth dg $k$-algebra $A$.
\end{assum}
This assumption is important in order also to ensure the functoriality of our construction of primitive forms.
%%%%%%%%%%%%%%%%%%%%%%%%%%%%%%%%%%%%%%%%%%%%%%%%%%%%%%%%%%%%%%%%%%%%%%%%%
\subsection{Hochschild cohomology of Calabi--Yau dg algebras}
From now on, $A$ always denotes a non-negatively graded connected smooth compact Calabi--Yau dg $k$-algebra.
Under Assumption~\ref{assum:formality}, there are isomorphisms of graded $k$-modules
\[
H\!H^\bullet(A)\cong H^\bullet(\T^\bullet_{poly}(A),d),\quad 
H\!H_\bullet(A)\cong H_\bullet(\Omega_\bullet(A),d).
\]
In particular, an element of $H\!H_w(A)$ giving the isomorphism~\eqref{defn:CY} corresponds to
a non-zero element $v_1\in H_w(\Omega_\bullet(A),d)$. 
Since the dg $k$-algebra $A$ is non-negatively graded and connected, we have 
$H_w(\Omega_\bullet(A),d)\cong H\!H_w(A)\cong H\!H^0(A)= k\cdot [1_A]$, which implies
that $H_w(\Omega_\bullet(A),d)=k\cdot v_1$.
\begin{conj}\label{conj:isom}
Let $v_1$ be a non-zero element in $H_w(\Omega_\bullet(A),d)$.
Under Assumption~\ref{assum:formality}, the contraction map
\[
(C^\bullet(A),\p)\longrightarrow (C_\bullet(A),\p),\quad 
X\mapsto \iota_Xv_1,
\]
induces a morphism of dg $k$-modules 
\[
(\T^\bullet_{poly}(A),d)\longrightarrow (\Omega_{w-\bullet}(A),d),\quad X\mapsto i_Xv_1,
\]
which is an isomorphism.
\end{conj}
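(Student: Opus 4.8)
The plan is to establish two things separately: that $X\mapsto i_Xv_1$ is a morphism of complexes for the differential $d$, and that it is a bijection of the underlying graded $k$-modules. The first is essentially formal. By Proposition~\ref{prop:iL} the contraction $i$ is already a well-defined morphism of graded modules $\T^\bullet_{poly}(A)\to \G r_k(\Omega_\bullet(A),\Omega_\bullet(A))$, so $X\mapsto i_Xv_1$ is a well-defined graded map $\T^\bullet_{poly}(A)\to\Omega_{w-\bullet}(A)$; moreover, since $i_Xi_Y=i_{X\circ Y}$, it is $\T^\bullet_{poly}(A)$-linear and equals ``cap with $v_1$'', the module map sending the unit to $v_1$. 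For compatibility with $d$ I would use $d=-L_{\f_A}$ on $\Omega_\bullet(A)$, $dX=[\f_A,X]_G$ on $\T^\bullet_{poly}(A)$, and $dv_1=0$. Evaluating $[i_X,L_{\f_A}]=i_{[X,\f_A]_G}$ on $v_1$, and using $L_{\f_A}v_1=-dv_1=0$ together with $[X,\f_A]_G=(-1)^{\overline{X}}dX$ (which follows from $\overline{\f_A}=2$ and the graded antisymmetry of $[-,-]_G$), gives at once $d(i_Xv_1)=i_{dX}v_1$. Hence $X\mapsto i_Xv_1$ is a chain map $(\T^\bullet_{poly}(A),d)\to(\Omega_{w-\bullet}(A),d)$; this step is routine.

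The content of the statement is the bijectivity, which is the categorical analogue of the classical fact that contraction with a nowhere-vanishing volume form identifies polyvector fields with differential forms on a smooth Calabi--Yau. I would realise it as a van den Bergh--type duality. The key observation is that $\T^\bullet_{poly}(A)=H\!H^\bullet(\bar A)$ and $\Omega_\bullet(A)=H\!H_\bullet(\bar A)$ are the Hochschild cohomology and homology of the underlying graded algebra $\bar A:=(A,\m^2_A)$ obtained by forgetting $d_A$, because $\delta=[\m^2_A,-]_G$ is exactly the Hochschild differential of $\bar A$. Granting that $\bar A$ is homologically smooth and that its inverse dualizing bimodule is isomorphic to the shift $T^{-w}\bar A$, compatibly with the Calabi--Yau class representing $v_1$, van den Bergh duality produces an isomorphism $H\!H^\bullet(\bar A)\cong H\!H_{w-\bullet}(\bar A)$ realised by the cap product with the fundamental class, i.e. by $X\mapsto i_Xv_1$; here one uses that the contraction $i$ realises the $\RR{\rm Hom}_{A^e}(A,A)$-module action on $\RR{\rm Hom}_{A^e}(A^!,A)$ recalled before Proposition~\ref{prop:iL}. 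Concretely, the nondegeneracy would be reduced to the Calabi--Yau duality $\RR{\rm Hom}_{A^e}(A,A)\cong T^{-w}(A\otimes^\LL_{A^e}A)$ obtained by combining \eqref{eq:1} with \eqref{defn:CY}, refined so as to be compatible with $\delta$ alone rather than only with $\partial=d+\delta$.

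The hard part is precisely this passage from the $\partial$-level to the $\delta$-level. The Calabi--Yau duality recorded earlier is an isomorphism of the full Hochschild invariants $H\!H^\bullet(A)\cong H\!H_{w-\bullet}(A)$, i.e. of $\partial$-cohomology, whereas the conjecture concerns the finer graded modules $\T^\bullet_{poly}(A)$ and $\Omega_\bullet(A)$, taken before $d$ is imposed; the latter cannot be deduced from the former by a naive associated-graded argument, since $\T^\bullet_{poly}(A)$ carries its own differential $d$ with $H^\bullet(\T^\bullet_{poly}(A),d)\cong H\!H^\bullet(A)$, so the two generally differ. This is where I expect Assumption~\ref{assum:formality} and the non-negatively graded connected structure to be indispensable: formality transports the entire calculus-algebra, and hence the duality, structure from $(C^\bullet(A),C_\bullet(A))$ to the formal model $(\T^\bullet_{poly}(A),\Omega_\bullet(A))$, while the auxiliary weight grading furnished by the Euler identity $\f_A=[{\mathfrak deg}_A,\f_A]_G$ should let one split both sides into finite-dimensional weight pieces---using connectedness, which gives $H_w(\Omega_\bullet(A),d)=k\cdot v_1$---and verify nondegeneracy of the induced pairing weight by weight. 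Establishing the homological smoothness of $\bar A$ and the Calabi--Yau trivialization of its dualizing bimodule at this refined level is, I expect, the technical heart of the argument.
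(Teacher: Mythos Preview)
The statement you are attempting to prove is labeled a \emph{Conjecture} in the paper, and the paper does not supply a proof; immediately afterwards it is simply adopted as Assumption~\ref{assum:isom} and used as a standing hypothesis for everything that follows. So there is no ``paper's own proof'' to compare against.

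Your first paragraph, establishing that $X\mapsto i_Xv_1$ is a chain map for $d$, is correct and is indeed routine from Proposition~\ref{prop:iL}: the identity $[d,i_X]=i_{dX}$ together with $dv_1=0$ gives $d(i_Xv_1)=i_{dX}v_1$ directly. This part is not what makes the statement a conjecture.

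For the bijectivity, you have correctly diagnosed both the intended mechanism and the obstruction. The Calabi--Yau isomorphism recorded around \eqref{eq:1}--\eqref{defn:CY} is a statement at the level of $\partial$-cohomology, i.e.\ $H\!H^\bullet(A)\cong H\!H_{w-\bullet}(A)$, whereas the conjecture asks for the same identification one level down, on $\delta$-cohomology $\T^\bullet_{poly}(A)\cong\Omega_{w-\bullet}(A)$ before passing to $d$-cohomology. Your proposal to realise this as van den Bergh duality for the underlying graded algebra $\bar A=(A,\m_A^2)$ is natural, but it hinges on two inputs you have not established and which the paper does not claim: that $\bar A$ is itself homologically smooth over $\bar A^e$, and that its inverse dualizing bimodule is $T^{-w}\bar A$ with the trivialization matching the class $v_1$. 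Neither follows formally from the corresponding facts for $(A,d_A)$; smoothness and the Calabi--Yau property are derived-invariant statements about $A$ with its full differential, and forgetting $d_A$ can destroy them. Your appeal to Assumption~\ref{assum:formality} and to the weight grading via ${\mathfrak deg}_A$ is the right instinct for where the leverage should come from, but as written it is a hope rather than an argument: formality of the calculus-algebra pair transports the operations, not a priori the duality isomorphism, and the weight-by-weight finite-dimensionality check still presupposes the very nondegeneracy you are trying to prove.

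In short: you have correctly isolated the content and the gap, which is exactly why the author records the statement as a conjecture rather than a proposition.
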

\begin{assum}\label{assum:isom}
Conjecture~\ref{conj:isom} holds for
a non-negatively graded connected smooth compact Calabi--Yau dg $k$-algebra $A$.
\end{assum}
In particular, under Assumption~\ref{assum:isom} we have the isomorphism 
\begin{equation}
H^p(\T^\bullet_{poly}(A),d)\cong H_{w-p}(\Omega_\bullet(A),d),\ p\in\ZZ, \quad X\mapsto i_X v_1,
\end{equation}
of graded $k$-modules.
\begin{defn}
Set 
\begin{equation}
Jac(\f_A):=H^\bullet(\T^\bullet_{poly}(A),d).
\end{equation}
We call the graded $k$-module $Jac(\f_A)$ the {\em Jacobian ring} of $A$.
\end{defn}
It is known that if $A$ is a smooth compact Calabi--Yau dg $k$-algebra of dimension $w$ then 
$A^e$ is a smooth compact Calabi--Yau dg $k$-algebra of dimension $2w$. 
Since $Jac(\f_A)\cong H\!H^\bullet(A)\cong H^{\bullet}(\RR{\rm Hom}_{A^e}(A,A))$, we have the following.
\begin{prop}
Fix a non-zero element $v_1\in H\!H_w(A)$.
Let $v_1^{\otimes 2}$ be the element in $H\!H_{2w}(A^e)$ 
corresponding to $v_1\otimes v_1$ under the K\"unneth formula $H\!H_{2w}(A^e)\cong H\!H_w(A^{op})\otimes_k H\!H_w(A)
=H\!H_w(A)\otimes_k H\!H_w(A)$. 
Then the induced isomorphism by $v_1^{\otimes 2}$
\[
T^{2w}A^e\longrightarrow (A^e)^*
\]
in the perfect derived category of the dg $k$-algebra $(A^e)^e$ equips $Jac(\f_A)$ with 
a structure of a finite dimensional graded commutative Frobenius $k$-algebra.
Namely, we have a non-degenerate graded symmetric bilinear form
$\eta_{\f_A,v_1^{\otimes 2}}:Jac(\f_A)\otimes_k  Jac(\f_A)\to T^{-2w}k$
such that  
\begin{equation}
\eta_{\f_A,v_1^{\otimes 2}}(X\circ Y,Z)=\eta_{\f_A,v_1^{\otimes 2}}(X,Y\circ Z),\quad X,Y,Z\in Jac(\f_A).
\end{equation}
\end{prop}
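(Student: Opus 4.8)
The plan is to identify $Jac(\f_A)=H\!H^\bullet(A)=H^\bullet(\RR{\rm Hom}_{A^e}(A,A))$ with the graded endomorphism algebra of the object $A$ in the perfect derived category of dg $A^e$-modules, and then to extract the Frobenius structure from Serre duality in this Calabi--Yau category. First I would record that $A^e$ is a smooth compact Calabi--Yau dg $k$-algebra of dimension $2w$ and that, under the K\"unneth isomorphism $H\!H_{2w}(A^e)\cong H\!H_w(A)\otimes_k H\!H_w(A)$, the class $v_1^{\otimes 2}$ furnishes the element of $H^{-2w}(\RR{\rm Hom}((A^e)^!,A^e))$ to which the proposition giving $T^wB\cong B^*$ (for $B$ smooth compact Calabi--Yau of dimension $w$) applies. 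Taking $B=A^e$ then produces exactly the isomorphism $T^{2w}A^e\cong (A^e)^*$ of dg $(A^e)^e$-modules appearing in the statement.

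Next I would push this bimodule isomorphism through the functor $\RR{\rm Hom}_{A^e}(A,-)$. Smoothness of $A$ means that $A$ is perfect over $A^e$, so $A$ is an object of $\mathrm{perf}(A^e)$; applying the recalled description of the Serre functor to the algebra $A^e$ in place of $A$, this functor is $M\mapsto M\otimes^\LL_{A^e}(A^e)^*$. Using $(A^e)^*\cong T^{2w}A^e$ it sends $A$ to $A\otimes^\LL_{A^e}T^{2w}A^e\cong T^{2w}A$, whence Serre duality yields a quasi-isomorphism
\[
\RR{\rm Hom}_{A^e}(A,A)^*\cong \RR{\rm Hom}_{A^e}(A,T^{2w}A)=T^{2w}\RR{\rm Hom}_{A^e}(A,A).
\]
Passing to cohomology gives, for each $p$, a perfect pairing $H\!H^p(A)\otimes_k H\!H^{2w-p}(A)\to k$, that is, a non-degenerate bilinear form $\eta_{\f_A,v_1^{\otimes 2}}:Jac(\f_A)\otimes_k Jac(\f_A)\to T^{-2w}k$ of total degree $-2w$. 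Finite-dimensionality of $Jac(\f_A)$ is ensured by compactness together with smoothness of $A$.

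It then remains to exhibit $\eta_{\f_A,v_1^{\otimes 2}}$ as a trace form for the product $\circ$. I would define the counit $\epsilon:Jac(\f_A)\to T^{-2w}k$ to be the Serre trace $\mathrm{tr}:H\!H^{2w}(A)\to k$; connectivity gives $H\!H^0(A)=k\cdot[1_A]$, so by the perfect pairing $H\!H^{2w}(A)\cong k$ and $\epsilon$ is a well-defined nonzero functional in the top degree. The Serre-duality pairing factors as composition followed by this trace, and since the Yoneda composition on $H^\bullet(\RR{\rm Hom}_{A^e}(A,A))$ agrees with the product $\circ$ on $C^\bullet(A)$ (Gerstenhaber's comparison of cup and Yoneda products), one obtains $\eta_{\f_A,v_1^{\otimes 2}}(X,Y)=\epsilon(X\circ Y)$. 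Granting this, the Frobenius identity $\eta_{\f_A,v_1^{\otimes 2}}(X\circ Y,Z)=\epsilon(X\circ Y\circ Z)=\eta_{\f_A,v_1^{\otimes 2}}(X,Y\circ Z)$ is immediate from associativity of $\circ$, and graded symmetry follows from the graded commutativity of $\circ$ recorded earlier, since $\epsilon(X\circ Y)=(-1)^{\overline{X}\,\overline{Y}}\epsilon(Y\circ X)$.

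The hard part will be this last identification: verifying, with the correct Koszul signs, that the abstract pairing built from $T^{2w}A^e\cong(A^e)^*$ really is the ``trace of the $\circ$-product'' form, and in particular that the counit $\epsilon$ is symmetric in the inputs of $v_1^{\otimes 2}$. This is where the interchange of the two tensor factors $A^{op}$ and $A$ and the cyclic symmetry of the Calabi--Yau trace enter, and it is the step most sensitive to sign conventions; once $Jac(\f_A)$ is identified with $\End^\bullet(A)$ in the Calabi--Yau category $\mathrm{perf}(A^e)$ and this compatibility is pinned down, the remaining assertions are formal.
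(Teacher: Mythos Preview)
Your proposal is correct and follows exactly the paper's approach. The paper gives no formal proof: its entire justification is the two sentences preceding the proposition, noting that $A^e$ is smooth compact Calabi--Yau of dimension $2w$ and that $Jac(\f_A)\cong H\!H^\bullet(A)\cong H^\bullet(\RR{\rm Hom}_{A^e}(A,A))$; your argument is simply a detailed unpacking of the Serre-duality Frobenius structure on the endomorphism algebra of $A$ in $\mathrm{perf}(A^e)$ that these two facts imply, including the identification of the Yoneda and cup products that the paper records separately (with a citation to Kaledin) just after the calculus-algebra proposition.
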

%%%%%%%%%%%%%%%%%%%%%%%%%%%%%%%%%%%%%%%%%%%%%%%%%%%%%%%%%%%%%%%%%%%%%%%%%
\subsection{Filtered de Rham cohomology and the degeneration of Hodge to de Rham}
\begin{defn}
Let $u$ be a formal variable of degree two.
Define a graded $k((u))$-module $\H_{\f_A}$, called the filtered de Rham cohomology, by
\begin{equation}
\H_{\f_A}:=H_\bullet(T^{-w}\Omega_\bullet (A)((u)),d +uB)
\end{equation}
and for any integer $p\in\ZZ$ the graded $k[[u]]$-submodules $\H_{\f_A}^{(-p)}$of $\H_{\f_A}$
\begin{equation}
\H_{\f_A}^{(-p)}:=H_\bullet(T^{-w}\Omega_\bullet (A)[[u]]u^{p},d +uB).
\end{equation}
Define a graded $k$-module $\Omega_{\f_A}$ by
\begin{equation}
\Omega_{\f_A}:=H_\bullet(T^{-w}\Omega_\bullet(A),d).
\end{equation}
\end{defn}
\begin{prop}\label{prop:Hodge to de Rham}
For all $p\in\ZZ$, there exists an exact sequence of graded $k$-modules
\begin{equation}
0\longrightarrow \H_{\f_A}^{(-p-1)}\longrightarrow 
\H_{\f_A}^{(-p)} \stackrel{r^{(-p)}}{\longrightarrow }
\Omega_{\f_A} \longrightarrow 0.
\end{equation}
\end{prop}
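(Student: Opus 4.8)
The plan is to realize the whole family $\{\H_{\f_A}^{(-p)}\}$ through a single short exact sequence of complexes and then to reduce the asserted exactness to a degeneration statement. First I would note that, since $d$ preserves the $u$-adic filtration while $uB$ strictly raises it, the operator $d+uB$ is a genuine differential on each $T^{-w}\Omega_\bullet(A)[[u]]u^{p}$ (using $d^2=0$, the relation $dB+Bd=0$ of Proposition~\ref{prop:iL}, and $B^2=0$, the standard property of the Connes differential) and that it restricts to the subcomplex consisting of series starting in degree $u^{p+1}$. This produces the short exact sequence of complexes
\[
0 \longrightarrow T^{-w}\Omega_\bullet(A)[[u]]u^{p+1} \longrightarrow T^{-w}\Omega_\bullet(A)[[u]]u^{p} \longrightarrow \left(T^{-w}\Omega_\bullet(A)\cdot u^{p},\, d\right) \longrightarrow 0 .
\]
The differential induced on the quotient is exactly $d$, because the $uB$-contribution of a leading term $u^{p}\omega$ lands in the subcomplex $u^{p+1}(\cdots)$; hence taking homology identifies the quotient term with $\Omega_{\f_A}$ and realizes $r^{(-p)}$ as the projection onto the leading coefficient.

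Next I would pass to the induced long exact sequence in homology,
\[
\cdots \longrightarrow \H_{\f_A}^{(-p-1)} \xrightarrow{\ \iota\ } \H_{\f_A}^{(-p)} \xrightarrow{\ r^{(-p)}\ } \Omega_{\f_A} \xrightarrow{\ \partial\ } \H_{\f_A}^{(-p-1)} \longrightarrow \cdots ,
\]
with the evident degree shift on $\partial$. Splitting this into the claimed short exact sequence for every $p$ is then equivalent to the vanishing of all connecting homomorphisms $\partial$: surjectivity of $r^{(-p)}$ says precisely that $\partial$ vanishes on $\Omega_{\f_A}$, while injectivity of $\iota$ at level $p$ says that the same connecting map vanishes one degree higher. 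Thus the entire proposition reduces to the statement that the spectral sequence associated to the $u$-adic filtration on $\left(T^{-w}\Omega_\bullet(A)[[u]],\, d+uB\right)$ degenerates at its first page.

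The main obstacle is exactly this degeneration, which is the noncommutative Hodge-to-de Rham degeneration in the sense of Kontsevich--Soibelman \cite{ks:1}. My approach would be to invoke Kaledin's theorem \cite{KaledinTokyo}, which gives degeneration of the Hochschild-to-cyclic spectral sequence for smooth compact dg algebras over a field of characteristic zero, and then to transport it to the mixed complex $(\Omega_\bullet(A),d,B)$ at hand. The transport is where Assumption~\ref{assum:formality} enters: the formality of the homotopy calculus algebra $(C^\bullet(A),C_\bullet(A))$ provides, in particular, a quasi-isomorphism of mixed complexes between the genuine Hochschild mixed complex $(C_\bullet(A),d+\delta,B)$ of $A$ and $(\Omega_\bullet(A),d,B)$, under which the two filtered de Rham cohomologies and their spectral sequences are identified, so that degeneration for the former forces it for the latter. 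Alternatively, in the spirit of the original Saito theory, one could exploit the additional weight grading underlying the Euler identity~\eqref{eq:f and deg}: with respect to this grading $d$ and $uB$ are homogeneous of definite weight, and a semisimplicity argument of the type developed by M.~Saito \cite{MSaito} and Barannikov \cite{bar:1,bar:2} yields the degeneration directly on $(\Omega_\bullet(A),d,B)$. Either route reduces to verifying the compatibility of the chosen quasi-isomorphism (or of the weight operator) with the Connes differential $B$, and I expect this bookkeeping to be the most delicate point of the argument.
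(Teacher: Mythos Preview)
Your proposal is correct and follows essentially the same route as the paper: both set up the obvious short exact sequence of complexes, and both reduce the claim to the noncommutative Hodge--to--de Rham degeneration (Kaledin) transported to $(\Omega_\bullet(A),d,B)$ via Assumption~\ref{assum:formality}. Your write-up is more explicit about the long exact sequence and the identification of the connecting map, whereas the paper simply records the consequence ${\rm Im}(B)\cap\Omega_{\f_A}=0$ and cites Kaledin's paper \cite{kaledin:1} (rather than the Tokyo notes \cite{KaledinTokyo}) for the degeneration; your alternative weight-grading argument is not used in the paper.
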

\begin{proof}
For non-negatively graded smooth compact dg $k$-algebras, 
it follows that ${\rm Im}(B)\cap H\!H_\bullet(A)=0$ from the degeneration of Hodge to de Rham conjecture proven by 
Kaledin, Theorem~5.5 in \cite{kaledin:1}.
Under Assumption~\ref{assum:formality} we have $H\!H_\bullet(A)\cong T^{-w}\Omega_{\f_A}$, which implies 
${\rm Im}(B)\cap \Omega_{\f_A}=0$ and hence $r^{(-p)}$ is surjective. 
\end{proof}
The $k[[u]]$-submodules $\left\{\H_{\f_A}^{(-p)}\right\}_{p\in\ZZ}$ of $\H_{\f_A}$ define an increasing filtration 
\[
\dots\subset \H_{\f_A}^{(-p-1)}\subset \H_{\f_A}^{(-p)}\subset\dots \subset 
\H_{\f_A},   
\]
such that the multiplication of $u$ induces an isomorphism of $k$-modules
\[
u:\H_{\f_A}^{(-p)}\cong \H_{\f_A}^{(-p-1)}.
\]
%%%%%%%%%%%%%%%%%%%%%%%%%%%%%%%%%%%%%%%%%%%%%%%%%%%%%%%%%%%%%%%%%%%%%%%%%
\subsection{Gau\ss--Manin connection on $\H_{\f_A}$}
Set $\T_{\widehat{\AA}^1_{u}}:=k[[u]]\frac{d}{du}$, where $\frac{d}{du}$ is the derivation on $k[[u]]$ satisfying $\frac{d}{du}(u)=1$.
Define a morphism of graded $k$-modules
$\nabla:\T_{\widehat{\AA}^1_{u}}\otimes_k  
\Omega_\bullet (A)((u))\to \Omega_\bullet (A)((u))$ by 
\begin{equation}
\nabla_{\frac{d }{du}}:=\frac{d }{du}-\frac{1}{u^2} i_{\f_A}.
\end{equation}
The following proposition is very simple, however, it will be the key in our construction of a primitive form.
It is the dg analogue of 
the corresponding fact used in the original Saito's theory for weighted homogeneous polynomials 
when we lift a homogeneous basis of the Jacobian ring multiplied with the standard holomorphic volume form 
to a very good section. 
\begin{prop}
The morphism of graded $k$-modules $\nabla$ is a connection which satisfies
\[
\left[\nabla_{u\frac{d}{du}},d+uB\right]=d+uB.
\]
Therefore, $\nabla$ induces a connection on $\H_{\f_A}$.
Moreover, we have 
\[
\nabla_{u\frac{d}{du}}\left(\H_{\f_A}^{(0)} \right)\subset \H_{\f_A}^{(0)}.
\]
\end{prop}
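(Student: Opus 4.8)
The plan is to reduce the whole statement to the Cartan calculus of Proposition~\ref{prop:iL} together with the ``Euler identity'' \eqref{eq:f and deg}; no genuinely new input is needed beyond these. Throughout I write $D:=d+uB$ and use the rewriting $\nabla_{u\frac{d}{du}}=u\frac{d}{du}-\frac1u i_{\f_A}$, which has degree zero and is therefore even. That $\nabla$ is a connection is immediate: for $f\in k((u))$ one checks the Leibniz rule $\nabla_{\frac{d}{du}}(f\omega)=\frac{df}{du}\omega+f\nabla_{\frac{d}{du}}\omega$, since $i_{\f_A}$ is $k((u))$-linear and hence commutes past the scalar $f$.

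The core is the commutator identity $[\nabla_{u\frac{d}{du}},D]=D$, which I would establish term by term. The operator $u\frac{d}{du}$ simply records the power of $u$; as $d$ preserves this power and $uB$ raises it by one, this gives $[u\frac{d}{du},d]=0$ and $[u\frac{d}{du},uB]=uB$. For the contraction part I use Proposition~\ref{prop:iL}: first $[i_{\f_A},d]=-i_{d\f_A}=0$ because $d\f_A=[\f_A,\f_A]_G=0$; second $[i_{\f_A},B]=-[B,i_{\f_A}]=L_{\f_A}=-d$, from the relations $[B,i_X]=-L_X$ and $L_{\f_A}=-d$. Since $u$ is central and commutes with $D$, we get $-\frac1u[i_{\f_A},D]=-[i_{\f_A},B]=d$, and adding $[u\frac{d}{du},D]=uB$ yields $[\nabla_{u\frac{d}{du}},D]=d+uB=D$.

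Given this identity, descent to $\H_{\f_A}$ is formal. Writing the (even) commutator as $\nabla_{u\frac{d}{du}}D-D\nabla_{u\frac{d}{du}}=D$, a $D$-cocycle $\xi$ satisfies $D(\nabla_{u\frac{d}{du}}\xi)=\nabla_{u\frac{d}{du}}(D\xi)-D\xi=0$, and a coboundary $\xi=D\zeta$ satisfies $\nabla_{u\frac{d}{du}}\xi=D(\nabla_{u\frac{d}{du}}\zeta+\zeta)$; hence $\nabla_{u\frac{d}{du}}$ passes to $\H_{\f_A}$. As $u$ acts invertibly on $\H_{\f_A}$ and commutes with $D$, the operator $\nabla_{\frac{d}{du}}=\frac1u\nabla_{u\frac{d}{du}}$ descends as well, which is the asserted connection.

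For the last assertion I would represent a class of $\H_{\f_A}^{(0)}$ by a cocycle $\xi=\sum_{n\ge0}\xi_n u^n\in T^{-w}\Omega_\bullet(A)[[u]]$; the $u^0$-component of $D\xi=0$ gives $d\xi_0=0$. Then $\nabla_{u\frac{d}{du}}\xi=\sum_{n\ge0}n\xi_n u^n-\sum_{n\ge0}(i_{\f_A}\xi_n)u^{n-1}$, whose only negative power is the $u^{-1}$-term $-i_{\f_A}\xi_0$. Here the Euler identity enters: by \eqref{eq:f and deg} and the graded antisymmetry of $[-,-]_G$, $d{\mathfrak deg}_A=[\f_A,{\mathfrak deg}_A]_G=-\f_A$, so $\f_A$ is $d$-exact in $\T^\bullet_{poly}(A)$; applying $[d,i_X]=i_{dX}$ with $X=-{\mathfrak deg}_A$ and using $d\xi_0=0$ gives $i_{\f_A}\xi_0=d\!\left(i_{-{\mathfrak deg}_A}\xi_0\right)=-d\!\left(i_{{\mathfrak deg}_A}\xi_0\right)$. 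Consequently the coboundary $D\!\left(u^{-1}i_{{\mathfrak deg}_A}\xi_0\right)$ has $u^{-1}$-coefficient $d(i_{{\mathfrak deg}_A}\xi_0)$, exactly matching the $u^{-1}$-term of $\nabla_{u\frac{d}{du}}\xi$; subtracting it produces a cohomologous cocycle lying in $T^{-w}\Omega_\bullet(A)[[u]]$, so $[\nabla_{u\frac{d}{du}}\xi]\in\H_{\f_A}^{(0)}$. That $\H_{\f_A}^{(0)}$ is a genuine submodule of $\H_{\f_A}$ is guaranteed by Proposition~\ref{prop:Hodge to de Rham}.

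I expect the main obstacle to be precisely this last step. The connection property, the commutator identity, and the descent are all formal consequences of the Cartan calculus; but $\nabla_{u\frac{d}{du}}$ a priori creates a first-order pole in $u$, so it only manifestly preserves the coarser lattice, and pushing its image back into $\H_{\f_A}^{(0)}$ forces one to show that $i_{\f_A}\xi_0$ is $d$-exact. This is the dg incarnation of the classical Euler relation that a weighted-homogeneous polynomial lies in its own Jacobian ideal, and it is supplied here only through \eqref{eq:f and deg}; arranging the signs in $[d,i_X]=i_{dX}$ and in the graded antisymmetry of $[-,-]_G$ to conspire correctly is where care is needed.
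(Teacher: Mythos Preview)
Your proof is correct and rests on the same two ingredients as the paper's: the Cartan calculus of Proposition~\ref{prop:iL} for the commutator $[\nabla_{u\frac{d}{du}},D]=D$, and the Euler identity~\eqref{eq:f and deg} for the preservation of $\H_{\f_A}^{(0)}$. The only difference is how the last step is packaged. You work at the level of a single cocycle $\xi=\sum_{n\ge0}\xi_n u^n$, isolate the offending $u^{-1}$-term $-i_{\f_A}\xi_0$, and cancel it with the specific coboundary $D(u^{-1}i_{{\mathfrak deg}_A}\xi_0)$. The paper instead proves the operator identity
\[
\nabla_{u\frac{d}{du}}\;=\;u\frac{d}{du}+L_{{\mathfrak deg}_A}+\frac{1}{u}\bigl[d+uB,\,i_{{\mathfrak deg}_A}\bigr],
\]
from which the claim is immediate: the first two summands manifestly preserve $\Omega_\bullet(A)[[u]]$, and the third is $D$-null-homotopic. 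Your argument is essentially the $u^{-1}$-coefficient of this identity evaluated on $\xi$. The paper's version buys a bit more, since it identifies the induced endomorphism on $\H_{\f_A}^{(0)}$ explicitly as $u\frac{d}{du}+L_{{\mathfrak deg}_A}$; this is precisely what is used in the proof of Proposition~\ref{prop:very good section} to obtain $\nabla_{u\frac{d}{du}}s^{(0)}(v_i)=q_i\,s^{(0)}(v_i)$.
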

\begin{proof}
Use equalities in Proposition~\ref{prop:iL}. We have 
\[
\left[\nabla_{u\frac{d}{du}},d+uB\right]=uB-\frac{1}{u}\left[i_{\f_A},d+uB \right]
=uB-L_{\f_A}=d+uB
\]
Together with the equation~\eqref{eq:f and deg}, we also have 
\[
u\frac{d}{du}-\frac{1}{u}i_{\f_A}=u\frac{d}{du}-\frac{1}{u}\left[i_{{\mathfrak deg}_A},L_{\f_A}\right]
=u\frac{d}{du}+L_{{\mathfrak deg}_A}+\frac{1}{u}\left[d+uB, i_{{\mathfrak deg}_A}\right]
\]
\end{proof}
\begin{defn} 
The connection $\nabla$ on $\H_{\f_A}$ is called the Gau\ss--Manin connection for $A$.
\end{defn}
%%%%%%%%%%%%%%%%%%%%%%%%%%%%%%%%%%%%%%%%%%%%%%%%%%%%%%%%%%%%%%%%%%%%%%%%%
\subsection{Exponents}
For $a_0\otimes Ta_1\otimes\dots \otimes Ta_n\in C_\bullet(A)$ such that $a_i\in A^{\overline{a_i}}$,  
the endomorphism $\L_{\deg_A}$ on $C_\bullet(A)$ can be calculated as 
\[
\L_{\deg_A}\left(a_0\otimes Ta_1\otimes\dots \otimes Ta_n \right):=
\left(\sum_{i=0}^n\overline{a_i}\right)\cdot \left(a_0\otimes Ta_1\otimes\dots \otimes Ta_n \right).
\]
This obviously commutes with the operator $\delta$ on $C_\bullet(A)$ and hence defines 
an endomorphism on $\Omega_\bullet(A)$,  which is exactly $L_{{\mathfrak deg}_A}$.
\begin{prop}\label{prop:degree endo}
The endomorphism of graded $k$-modules $L_{{\mathfrak deg}_A}$ on $\Omega_\bullet(A)$ induces a $k$-linear endomorphism
on $\Omega_{\f_A}$. 
\end{prop}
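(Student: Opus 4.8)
The plan is to show that $L_{{\mathfrak deg}_A}$ is a chain map for the differential $d$ on $\Omega_\bullet(A)$ up to a correction proportional to $d$, which is all that is needed for it to descend to $d$-homology; the shift $T^{-w}$ plays no role, since it merely relabels degrees and does not affect $d$. Concretely, it suffices to verify that $L_{{\mathfrak deg}_A}$ sends $d$-cycles to $d$-cycles and $d$-boundaries to $d$-boundaries, and this will follow once the graded commutator $[d,L_{{\mathfrak deg}_A}]$ is identified.

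First I would compute $[d,L_{{\mathfrak deg}_A}]$ by means of the Cartan calculus. By the relation $[d,L_X]=L_{-dX}$ recorded just after Proposition~\ref{prop:iL}, we have $[d,L_{{\mathfrak deg}_A}]=L_{-d{\mathfrak deg}_A}$, so everything reduces to identifying $d{\mathfrak deg}_A\in\T^2_{poly}(A)$. Applying the ``Euler identity'' $dX=[\f_A,X]_G$ with $X={\mathfrak deg}_A$, together with the graded antisymmetry of the Gerstenhaber bracket on $C^{\bullet+1}(A)$ and the equation~\eqref{eq:f and deg}, I obtain $d{\mathfrak deg}_A=[\f_A,{\mathfrak deg}_A]_G=-[{\mathfrak deg}_A,\f_A]_G=-\f_A$. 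Substituting this and invoking the relation $L_{\f_A}=-d$ of Proposition~\ref{prop:iL} gives $[d,L_{{\mathfrak deg}_A}]=L_{\f_A}=-d$.

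With the identity $d\,L_{{\mathfrak deg}_A}-L_{{\mathfrak deg}_A}\,d=-d$ in hand, the conclusion is immediate. If $d\omega=0$, then $d(L_{{\mathfrak deg}_A}\omega)=L_{{\mathfrak deg}_A}(d\omega)-d\omega=0$, so cycles map to cycles; and for any $\eta$ one has $L_{{\mathfrak deg}_A}(d\eta)=d(L_{{\mathfrak deg}_A}\eta)+d\eta=d\bigl(L_{{\mathfrak deg}_A}\eta+\eta\bigr)$, so boundaries map to boundaries. Hence $L_{{\mathfrak deg}_A}$ descends to a well-defined $k$-linear endomorphism of $\Omega_{\f_A}=H_\bullet(T^{-w}\Omega_\bullet(A),d)$.

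I do not expect a genuine obstacle here; the whole argument is a short application of the Cartan calculus of Proposition~\ref{prop:iL}. The only point requiring care is the sign bookkeeping: one must track the graded antisymmetry of the Gerstenhaber bracket (the relevant sign $(-1)^{(2-1)(1-1)}$ equals $+1$, so that $[\f_A,{\mathfrak deg}_A]_G=-[{\mathfrak deg}_A,\f_A]_G$) and the convention for the graded commutator with $d$, so as to land on $[d,L_{{\mathfrak deg}_A}]=-d$ rather than $0$. It is worth emphasizing that this commutator is a \emph{nonzero} multiple of $d$; it is precisely because the correction is proportional to $d$ itself---rather than $L_{{\mathfrak deg}_A}$ commuting with $d$ on the nose---that the induced map on homology is well defined.
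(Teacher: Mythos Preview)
Your proof is correct and follows essentially the same route as the paper's: both derive the identity $[L_{{\mathfrak deg}_A},d]=d$ (equivalently your $[d,L_{{\mathfrak deg}_A}]=-d$) from the Cartan calculus of Proposition~\ref{prop:iL} together with the Euler identity~\eqref{eq:f and deg}, and then conclude that $L_{{\mathfrak deg}_A}$ preserves $d$-cycles and $d$-boundaries. The only cosmetic difference is that the paper substitutes $d=-L_{\f_A}$ and invokes $[L_X,L_Y]=L_{[X,Y]_G}$ directly, whereas you pass through the derived relation $[d,L_X]=L_{-dX}$ and compute $d\,{\mathfrak deg}_A$ on the cochain side; the underlying computation is the same.
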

\begin{proof}
The equalities in Proposition~\ref{prop:iL} and the equation~\eqref{eq:f and deg} yield
\[
\left[L_{{\mathfrak deg}_A},d\right]=-\left[L_{{\mathfrak deg}_A},L_{\f_A}\right]=-L_{[{\mathfrak deg}_A,\f_A]_G}=-L_{\f_A}=d,
\]
which implies that $L_{{\mathfrak deg}_A}(\Omega_{\f_A})\subset \Omega_{\f_A}$.
\end{proof}

Denote by $N_A$ the endomorphism of graded $k$-modules on $\Omega_{\f_A}$ induced by $L_{{\mathfrak deg}_A}$.
Define graded $k$-submodules $\Omega_{\f_A}^{p,q}$ of $\Omega_{\f_A}$ by 
\[
\Omega_{\f_A}^{p,q}:=\{\omega\in \Omega_{\f_A}~|~\overline{\omega}=w-p+q,\ N_A\omega=q\omega\},\quad p,q\in\ZZ.
\]
and set
\[
F_q:=\bigoplus_{p\in\ZZ, r\le q}\Omega_{\f_A}^{p,r},\quad q\in \ZZ.
\]
\begin{defn}
The graded $k$-submodules $\{F_q\}_{q\in\ZZ}$ form an increasing filtration of $\Omega_{\f_A}$
\begin{equation}
0\subset \dots \subset F_q\subset F_{q+1}\subset \dots\subset \Omega_{\f_A},
\end{equation}
which is called the Hodge filtration of $\Omega_{\f_A}$.
\end{defn}
Since $A$ is non-negatively graded, we have $F_{q}=0$ for $q<0$. 
\begin{defn}
The Hodge numbers are  
\[
h^{p,q}(A):= \dim_k   \Omega_{\f_A}^{p,q},\quad p,q\in\ZZ.
\]
The integer $q$ with $h^{p,q}(A)\ne 0$ is called an exponent.
The set of exponents is the multi-set 
\[
\left\{q*h^{p,q}(A)~|~p,q\in\ZZ,\ \Omega_{\f_A}^{p,q}\ne 0 \right\},
\]
where by $u*v$ we denote $v$ copies of the integer $u$.  
\end{defn}
%%%%%%%%%%%%%%%%%%%%%%%%%%%%%%%%%%%%%%%%%%%%%%%%%%%%%%%%%%%%%%%%%%%%%%%%%
\subsection{Pairing on the Hochschild homology}
Since $A$ is Calabi--Yau of dimension $w$, an element $v_1\in H\!H_w(A)$ yields an isomorphism of graded $k$-modules 
\begin{equation}
Jac(\f_A)\cong \Omega_{\f_A},\quad X\mapsto i_X v_1.
\end{equation}
Move the $k$-bilinear form on $Jac(\f_A)$ to $\Omega_{\f_A}$ by this isomorphism.
\begin{defn}
Define a $k$-bilinear form $J_{\f_A}:\Omega_{\f_A}\otimes_k  \Omega_{\f_A}\to T^{-2w}k$ by 
\begin{equation}
J_{\f_A}(i_Xv_1,i_Y v_1):=(-1)^{w\cdot \overline{Y}}\eta_{\f_A,v_1^{\otimes 2}}(X,Y),\quad X,Y\in Jac(\f_A).
\end{equation}
\end{defn}
Note that $J_{\f_A}$ does not depend on the choice of $v_1$. 
Moreover, it induces a perfect pairing 
\[
J_{\f_A}: \Omega_{\f_A}^{p,q}\otimes_k  \Omega_{\f_A}^{w-p,w-q}\longrightarrow k.
\]
Some elementary homological algebras for
$Jac(\f_A)$ and $\Omega_{\f_A}$ yields the following.
\begin{prop}\label{prop:Hodge numbers}
The Hodge numbers satisfy
\begin{enumerate}
\item
$h^{p,q}(A)=0$ if $p < 0$ or $q<0$.
\item
$h^{w,0}(A)=1$.
\item
$h^{w-p,q}(A)=h^{p,w-q}(A)$.
\end{enumerate}
\end{prop}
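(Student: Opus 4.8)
The plan is to transport the whole statement along the isomorphism $Jac(\f_A)\cong \Omega_{\f_A}$, $X\mapsto i_Xv_1$, and to read the bigrading $(p,q)$ off two natural gradings on $\T^\bullet_{poly}(A)$. First I would record how the two indices are computed. The index $q$ is the eigenvalue of $N_A$, induced by $L_{{\mathfrak deg}_A}$; applying the Cartan calculus (Proposition~\ref{prop:iL}) with $Y={\mathfrak deg}_A$ to the relation $[i_X,L_Y]=i_{[X,Y]_G}$ and using $L_{{\mathfrak deg}_A}v_1=0$, one gets $N_A(i_Xv_1)=\pm i_{[{\mathfrak deg}_A,X]_G}v_1$, so that $q$ is exactly the ${\mathfrak deg}_A$-weight of $X$, i.e. the internal weight coming from the $\ZZ$-grading of $A$. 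The index $\overline{\omega}$ is the total (Hochschild) degree of $X$, and the difference $\overline{\omega}-q$ is the number of tensor arguments (the polyvector degree) of a representing cochain, a non-negative integer; thus $p=w-(\overline{\omega}-q)\le w$. These identifications reduce everything to elementary bookkeeping on $Jac(\f_A)$.

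With this dictionary in place, (3) is immediate: the perfect pairing $J_{\f_A}\colon \Omega_{\f_A}^{p,q}\otimes_k\Omega_{\f_A}^{w-p,w-q}\to k$ established just above forces $\dim_k\Omega_{\f_A}^{p,q}=\dim_k\Omega_{\f_A}^{w-p,w-q}$, that is $h^{p,q}(A)=h^{w-p,w-q}(A)$, which is (3) after relabelling $p\mapsto w-p$. For (2), note that the condition $\overline{\omega}=0$ cuts out the degree-zero part of $\Omega_{\f_A}$, which corresponds under the isomorphism to $H_w(\Omega_\bullet(A),d)$; since $A$ is connected this is $H\!H^0(A)=k\cdot[1_A]$, hence the one-dimensional space $k\cdot v_1$. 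As $N_Av_1=0$, the whole degree-zero part already satisfies the $N_A$-condition, so $\Omega_{\f_A}^{w,0}=k\cdot v_1$ and $h^{w,0}(A)=1$.

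Finally (1). The vanishing for $q<0$ is the non-negativity of the grading: on a chain $a_0\otimes Ta_1\otimes\dots\otimes Ta_n$ the operator $L_{{\mathfrak deg}_A}$ acts by $\sum_{i}\overline{a_i}\ge 0$, so every eigenvalue $q$ of $N_A$ is non-negative (this is the already noted fact $F_q=0$ for $q<0$). The vanishing for $p<0$ I would deduce by duality rather than by a dimension estimate: since the tensor degree $\overline{\omega}-q$ is always non-negative, every nonzero class has $p\le w$, and feeding this bound into the perfect pairing of (3), the partner of a hypothetical class with $p<0$ would lie in $\Omega_{\f_A}^{w-p,w-q}$ with $w-p>w$, i.e. with negative tensor degree, which is impossible; so that partner, and hence the class itself, vanishes. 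Equivalently, $p\le w$ for all classes dualizes to $p\ge 0$ and $q\ge 0$ dualizes to $q\le w$, giving $0\le p,q\le w$. I expect the main obstacle to be precisely this grading bookkeeping: pinning down that, under $X\mapsto i_Xv_1$, the index $\overline{\omega}$ is the total degree while $q$ is the internal weight, so that $\overline{\omega}-q$ is genuinely the non-negative tensor degree. Once this is clean, the perfect pairing does the rest and no separate bound on the polyvector degree is needed.
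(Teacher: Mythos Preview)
Your argument is correct and, for parts (2) and (3), proceeds exactly as the paper does: (2) via the identification $[1_A]\leftrightarrow v_1$ together with connectedness, and (3) via the perfectness of $J_{\f_A}$.

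The only point where you deviate is in the organization of (1). The paper simply asserts that both vanishings are consequences of the non-negative grading of $A$, without further comment. You instead unpack the bigrading through the isomorphism $X\mapsto i_Xv_1$, read off $q\ge 0$ directly from the spectrum of $L_{{\mathfrak deg}_A}$, and then obtain $p\ge 0$ indirectly: first $p\le w$ from the non-negativity of the tensor length $\overline{\omega}-q$, and then $p\ge 0$ by feeding $p\le w$ into the duality (3). This is a mild reordering of the logic (you use (3) to finish (1), whereas the paper treats the three items independently), but it is perfectly sound and has the virtue of making explicit exactly which grading datum is responsible for each inequality. Your own caveat about the bookkeeping is well placed: once one checks that under $X\mapsto i_Xv_1$ the degree $\overline{\omega}$ is the Hochschild degree of $X$ and $q$ is its ${\mathfrak deg}_A$-weight (so that $\overline{\omega}-q$ really is the number of tensor arguments of a bihomogeneous representative, hence $\ge 0$), the rest follows as you say.
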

\begin{proof}
The property {\rm (1)} holds since $A$ is non-negatively graded.
The property {\rm (2)} follows from the fact that $[1_A]\in H\!H^0(A)$ is mapped to $v_1$ 
under the isomorphism $H\!H^0(A)\cong H\!H_w(A)$ and that $v_1\in \Omega_{\f_A}^{w,0}$.
The perfectness of $J_{\f_A}$ implies the property {\rm (3)}.
\end{proof}
This proposition means that $0$ is the minimal exponent and 
the duality of exponents holds, namely, if $q$ is an exponent then $w-q$ is also an exponent.
%%%%%%%%%%%%%%%%%%%%%%%%%%%%%%%%%%%%%%%%%%%%%%%%%%%%%%%%%%%%%%%%%%%%%%%%%
\subsection{Existence of a very good section}
\begin{prop}\label{prop:very good section}
Let $r^{(0)}:\H_{\f_A}^{(0)} \longrightarrow \Omega_{\f_A}$ be the $k$-linear map of degree zero in the exact sequence 
of graded $k$-modules in Proposition~\ref{prop:Hodge to de Rham}$:$
\[
0\longrightarrow \H_{\f_A}^{(-1)}\longrightarrow 
\H_{\f_A}^{(0)} \stackrel{r^{(0)}}{\longrightarrow }
\Omega_{\f_A} \longrightarrow 0.
\]
Then there exists a section $s^{(0)}:\Omega_{\f_A}\longrightarrow \H_{\f_A}^{(0)}$ of $r^{(0)}$ such that  
\[
\nabla_{u\frac{d}{du}}\left(s^{(0)}\left(\Omega_{\f_A}\right)\right)\subset s^{(0)}\left(\Omega_{\f_A}\right).
\]
\end{prop}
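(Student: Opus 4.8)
The plan is to exploit the internal grading of $A$ in order to realize $\nabla_{u\frac{d}{du}}$ as a \emph{regular singular} connection with semisimple residue, and then to produce the section by putting the connection in constant normal form. First I would introduce the total degree operator
\[
E:=L_{{\mathfrak deg}_A}+u\frac{d}{du}
\]
on $T^{-w}\Omega_\bullet(A)((u))$. Using Proposition~\ref{prop:iL} together with $[L_{{\mathfrak deg}_A},d]=d$, $[L_{{\mathfrak deg}_A},B]=0$ and $[u\frac{d}{du},u]=u$, one checks that $[E,d+uB]=d+uB$, so $E$ descends to a semisimple operator on $\H_{\f_A}^{(0)}$ with integer eigenvalues, bounded below because $A$ is non-negatively graded, and inducing $N_A$ on $\Omega_{\f_A}=\H_{\f_A}^{(0)}/u\H_{\f_A}^{(0)}$; in particular each $E$-eigenspace of $\H_{\f_A}^{(0)}$ is finite dimensional. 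By the mult\-iplication-by-$u$ isomorphism of Proposition~\ref{prop:Hodge to de Rham}, $\H_{\f_A}^{(0)}$ is free of finite rank over $k[[u]]$ with $\H_{\f_A}^{(0)}/u\H_{\f_A}^{(0)}\cong\Omega_{\f_A}$.

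The second step is the key commutation $[E,\nabla_{u\frac{d}{du}}]=0$: indeed $[E,u\frac{d}{du}]=0$, while $[L_{{\mathfrak deg}_A},i_{\f_A}]=i_{\f_A}$ (a consequence of the Euler identity~\eqref{eq:f and deg} and Proposition~\ref{prop:iL}) and $[u\frac{d}{du},\frac1u]=-\frac1u$ give $[E,\frac1u i_{\f_A}]=\frac1u i_{\f_A}-\frac1u i_{\f_A}=0$. Thus $\nabla_{u\frac{d}{du}}$ preserves every $E$-eigenspace. Rewriting the identity from the proof of the Gau\ss--Manin proposition as $\nabla_{u\frac{d}{du}}=E+\frac1u[d+uB,i_{{\mathfrak deg}_A}]$ and reducing modulo $u$, a direct computation with Proposition~\ref{prop:iL} shows the correction term induces zero on $\Omega_{\f_A}$; hence $\nabla_{u\frac{d}{du}}$ is a connection with logarithmic pole whose residue is the semisimple operator $N_A$, and $R:=\nabla_{u\frac{d}{du}}-E$ maps $\H_{\f_A}^{(0)}$ into $\H_{\f_A}^{(-1)}=u\H_{\f_A}^{(0)}$.

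Next I would fix an $N_A$-eigenbasis $\{\bar e_\alpha\}$ of $\Omega_{\f_A}$ with $N_A\bar e_\alpha=q_\alpha\bar e_\alpha$ and, since $r^{(0)}$ is a graded surjection of degree zero, choose homogeneous lifts $e_\alpha\in\H_{\f_A}^{(0)}$ of $E$-degree $q_\alpha$. In this $k[[u]]$-basis, $E$-homogeneity forces the connection matrix to have the rigid shape $A(u)_{\beta\alpha}=q_\alpha\delta_{\beta\alpha}+a_{\beta\alpha}u^{q_\alpha-q_\beta}$ with $a_{\beta\alpha}\in k$ vanishing unless $q_\alpha>q_\beta$: the residue is the diagonal matrix $N_A$ (there is no off-diagonal constant, as $R$ has no $u^0$-term), and every non-constant entry sits at the single $u$-power $q_\alpha-q_\beta\ge 1$.

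The main step, and the only real obstacle, is to gauge these non-constant terms away. A priori this is precisely the resonant situation, since the residue eigenvalues $q_\alpha$ (the exponents) differ by positive integers, and a regular singular connection need not then be equivalent to its residue. The grading, however, rescues us: writing $A(u)=N_A+\sum_{k\ge1}A_k u^k$ and a homogeneous gauge $g=I+\sum_{k\ge1}g_k u^k$ (so $(g_k)_{\beta\alpha}\ne0$ only when $q_\alpha-q_\beta=k$), the equation removing the $u^k$-term is $(k-[\,\cdot\,,N_A])g_k=-A_k$; on the relevant components $[\,\cdot\,,N_A]$ acts by $q_\beta-q_\alpha=-k$, so $k-[\,\cdot\,,N_A]$ acts by $2k\ne0$ and is invertible in characteristic zero. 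Solving order by order (a finite process, the exponents lying in $[0,w]$) yields a homogeneous $k[[u]]$-basis $\{f_\alpha\}$ with $\nabla_{u\frac{d}{du}}f_\alpha=q_\alpha f_\alpha$. Setting $s^{(0)}(\bar e_\alpha):=f_\alpha$ and extending $k$-linearly gives a section of $r^{(0)}$ whose image is spanned by eigenvectors of $\nabla_{u\frac{d}{du}}$ and is therefore preserved by it. The crux throughout is the observation that $E$-homogeneity pins each off-diagonal entry to the $u$-power equal to its eigenvalue gap, converting the would-be vanishing resonance factor $k-(q_\alpha-q_\beta)$ into $2k$.
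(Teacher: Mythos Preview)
Your approach starts well and largely parallels the paper's, but the gauge step contains a genuine error. When you compute how $[\,\cdot\,,N_A]$ acts on the $(\beta,\alpha)$-entry, you get $(gN_A-N_Ag)_{\beta\alpha}=(q_\alpha-q_\beta)g_{\beta\alpha}$, not $(q_\beta-q_\alpha)g_{\beta\alpha}$. On the components singled out by $E$-homogeneity one has $q_\alpha-q_\beta=k$, so $k-[\,\cdot\,,N_A]$ acts by $k-k=0$, not $2k$. In other words the $E$-grading does not rescue you from resonance; it forces every off-diagonal entry to sit at the \emph{exactly} resonant power, and your gauge equation becomes $0\cdot g_k=-A_k$. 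So as written the argument does not remove the $A_k$'s.

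The fix is that there is nothing to gauge away. The identity you already quote,
\[
\nabla_{u\frac{d}{du}}=E+\tfrac{1}{u}\,[\,d+uB,\ i_{{\mathfrak deg}_A}\,],
\]
says that on a $(d+uB)$-closed $\omega$ the difference $R\omega$ is $(d+uB)$-exact in $u^{-1}T^{-w}\Omega_\bullet(A)[[u]]$; hence its class vanishes in $\H_{\f_A}^{(1)}$, and since $\H_{\f_A}^{(0)}\hookrightarrow\H_{\f_A}^{(1)}$ by the Hodge--to--de~Rham exact sequence, it vanishes in $\H_{\f_A}^{(0)}$ as well. Thus $\nabla_{u\frac{d}{du}}=E$ identically on $\H_{\f_A}^{(0)}$, not merely modulo $u$, and your $E$-homogeneous lifts $e_\alpha$ already satisfy $\nabla_{u\frac{d}{du}}e_\alpha=q_\alpha e_\alpha$. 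This is precisely the paper's argument: choose representatives $s^{(0)}(v_i)=\sum_l[\omega_{i,l}]u^l$ with $L_{{\mathfrak deg}_A}\omega_{i,l}=(q_i-l)\omega_{i,l}$, and conclude directly. Your Steps~1--4 are the whole proof; Steps~5--6 are both unnecessary and (with the sign corrected) inoperative.
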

\begin{proof}
Set $l_A:=\dim_k  Jac(\f_A)$. Then 
Proposition~\ref{prop:degree endo} implies the existence of a $k$-basis $\{v_1,\dots, v_{l_A}\}$ of $\Omega_{\f_A}$
such that $N_Av_i=q_i\cdot v_i$ for some $q_i\in\ZZ$ for $i=1,\dots ,l_A$, where 
$v_1$ is the element in $\Omega_{\f_A}^{w,0}$ as in the previous section.
There exists a section $s^{(0)}:\Omega_{\f_A}\longrightarrow \H_{\f_A}^{(0)}$ so that $s^{(0)}\left(\Omega_{\f_A}\right)$ is inside of 
$H_\bullet(T^{-w}\Omega_\bullet (A)\otimes_k  k[u],d +uB)$ since $k$ is a field and the dg $k$-algebra $A$ is non-negatively graded. 
Moreover, the equalities $\left[\nabla_{u\frac{d}{du}},d+uB\right]=d+uB$, 
$\left[L_{{\mathfrak deg}_A},d\right]=d$ and $\left[L_{{\mathfrak deg}_A},B\right]=0$, which in particular 
means that $d$ (resp. $B, u$) is of degree $1$ (resp. $0,1$) with respect to the grading given by $L_{{\mathfrak deg}_A}$, enable us to choose $\omega_{il}\in\Omega_\bullet(A)$ 
satisfying  $L_{{\mathfrak deg}_A}\omega_{i,l}=(q_i-l)\cdot \omega_{i,l}$ 
so that $s^{(0)}(v_i)=\displaystyle\sum_{l=0}^m [\omega_{i,l}] u^l$ for some $m\in\ZZ_{\ge 0}$.
Therefore, $\nabla_{u\frac{d}{du}}s^{(0)}(v_i)=q_i\cdot s^{(0)}(v_i)$.
\end{proof}
A section $s^{(0)}$ of $r^{(0)}$ in Proposition~\ref{prop:very good section} is called a {\it very good section} after 
M.~Saito (see Introduction and Proposition 3.1~ in \cite{MSaito2}).
%%%%%%%%%%%%%%%%%%%%%%%%%%%%%%%%%%%%%%%%%%%%%%%%%%%%%%%%%%%%%%%%%%%%%%%%%
\subsection{Higher residue pairings}
Once a very good section $s^{(0)}$ for $r^{(0)}$ is given, we have an isomorphism 
\[
\Omega_{\f_A}((u))\cong \H_{\f_A},\quad 
 \sum_{p=-\infty}^\infty v_p u^{p}\mapsto 
\sum_{p=-\infty}^\infty s^{(0)}(v_p)u^{p},
\]
of $k((u))$-modules.
\begin{defn}
Fix a very good section $s^{(0)}$ for $r^{(0)}$ and define a $k$-bilinear form
$K_{\f_A}:\H_{\f_A}\otimes_k \H_{\f_A}\to k((u))$ by 
\begin{equation}
K_{\f_A}\left(\omega(u),\omega'(u)\right):=J_{\f_A}\left(\omega(u),\omega'(-u)\right)u^w,
\end{equation}
where $J_{\f_A}$ on the right hand sided denotes the pairing on 
$\Omega_{\f_A}((u))$ which is the $k((u))$-linear
extension of $J_{\f_A}$ on $\Omega_{\f_A}$.
The pairing $K_{\f_A}$ is called the higher residue pairings after K.~Saito, Section~4 in \cite{s:2}.
\end{defn}
It follows from some elementary calculation that for all $h(u)\in k((u))$
\[
h(u)K_{\f_A}(\omega_1,\omega_2)=K_{\f_A}(h(u)\omega_1,\omega_2)=K_{\f_A}(\omega_1,h(-u)\omega_2),
\]
and for all $\omega_1, \omega_2\in {\H}_{\f_A}$  
\[
u\frac{d}{du}K_{\f_A}(\omega_1,\omega_2)=K_{\f_A}(\nabla_{u\frac{d}{du}}\omega_1,\omega_2)
+K_{\f_A}(\omega_1,\nabla_{u\frac{d}{du}}\omega_2).
\]
Since these properties characterize uniquely the higher residue pairings for isolated hypersurface singularities 
(see p.45 -- p.46 in \cite{MSaito}), we just denote it by $K_{\f_A}$ without mentioning the dependence on 
the chosen very good section $s^{(0)}$.
Proposition~\ref{prop:very good section} imply the following.
\begin{prop}\label{prop:marking}
Let $s^{(0)}$ be a very good section for $r^{(0)}$.
The graded $k$-submodule $S:=s^{(0)}(\Omega_{\f_A})\otimes_kk[u^{-1}] u^{-1}$ of $\H_{\f_A}$ 
satisfies 
\[
\H_{\f_A}=\H_{\f_A}^{(0)}\oplus S,
\ u^{-1}S\subset S,
\ \nabla_{u\frac{d}{du}}S\subset S,
\ K_{\f_A}(S,S)\subset k[u^{-1}]u^{w-2}.
\]
\end{prop}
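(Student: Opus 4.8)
The plan is to transfer everything to the $k((u))$-module isomorphism
\[
\Phi:\Omega_{\f_A}((u))\xrightarrow{\ \sim\ }\H_{\f_A},\qquad \sum_p v_pu^p\longmapsto \sum_p s^{(0)}(v_p)u^p,
\]
induced by the very good section, and to read off each of the four assertions as an elementary fact about Laurent series in $u$ with coefficients in $\Omega_{\f_A}$. First I would identify the two lattices. Since $s^{(0)}(\Omega_{\f_A})\subset\H_{\f_A}^{(0)}$ and $\H_{\f_A}^{(0)}$ is a $k[[u]]$-module, $\Phi$ carries $\Omega_{\f_A}[[u]]$ into $\H_{\f_A}^{(0)}$; the reverse inclusion follows from the construction of $\Phi$ together with Proposition~\ref{prop:Hodge to de Rham}: because $r^{(0)}\circ s^{(0)}=\mathrm{id}$ and $u^p\H_{\f_A}^{(0)}=\H_{\f_A}^{(-p)}$, the standard $u$-adic successive-approximation argument lifts any element of $\H_{\f_A}^{(0)}$ to a $k[[u]]$-combination of $s^{(0)}(\Omega_{\f_A})$. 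Hence $\Phi(\Omega_{\f_A}[[u]])=\H_{\f_A}^{(0)}$ and, by definition, $\Phi(\Omega_{\f_A}\otimes_k k[u^{-1}]u^{-1})=S$, so the algebraic splitting $\Omega_{\f_A}((u))=\Omega_{\f_A}[[u]]\oplus\Omega_{\f_A}\otimes_k k[u^{-1}]u^{-1}$ yields $\H_{\f_A}=\H_{\f_A}^{(0)}\oplus S$. The inclusion $u^{-1}S\subset S$ is then immediate from $S=\bigoplus_{p\ge 1}s^{(0)}(\Omega_{\f_A})u^{-p}$.

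For the stability under the connection I would combine the Leibniz rule for $\nabla_{u\frac{d}{du}}$ with respect to multiplication by $u^{-p}$ and the very good property $\nabla_{u\frac{d}{du}}(s^{(0)}(\Omega_{\f_A}))\subset s^{(0)}(\Omega_{\f_A})$ of Proposition~\ref{prop:very good section}. For $v\in\Omega_{\f_A}$ and $p\ge 1$,
\[
\nabla_{u\frac{d}{du}}\bigl(s^{(0)}(v)u^{-p}\bigr)=-p\,s^{(0)}(v)u^{-p}+u^{-p}\nabla_{u\frac{d}{du}}\bigl(s^{(0)}(v)\bigr),
\]
and both summands lie in $s^{(0)}(\Omega_{\f_A})u^{-p}\subset S$; summing over $p\ge 1$ gives $\nabla_{u\frac{d}{du}}S\subset S$.

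The main computational point is the bound $K_{\f_A}(S,S)\subset k[u^{-1}]u^{w-2}$, where I would use the two semilinearity relations recorded after the definition of $K_{\f_A}$ to strip off all powers of $u$. The one input to pin down first is that on the section $K_{\f_A}$ reduces to $J_{\f_A}$: under $\Phi$ the elements $s^{(0)}(v),s^{(0)}(v')$ correspond to the $u$-constant elements $v,v'\in\Omega_{\f_A}$, so the defining formula $K_{\f_A}(\omega(u),\omega'(u))=J_{\f_A}(\omega(u),\omega'(-u))u^w$ gives $K_{\f_A}(s^{(0)}(v),s^{(0)}(v'))=J_{\f_A}(v,v')u^w$. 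Then for $p,q\ge 1$,
\[
K_{\f_A}\bigl(s^{(0)}(v)u^{-p},\,s^{(0)}(v')u^{-q}\bigr)=(-1)^q\,J_{\f_A}(v,v')\,u^{w-p-q},
\]
and since $w-p-q\le w-2$ every such term lies in $k[u^{-1}]u^{w-2}$; bilinearity extends this to all of $S$. The only genuine obstacle is the bookkeeping around $\Phi$: one must check that transporting $J_{\f_A}$ from $\Omega_{\f_A}$ to $\H_{\f_A}$ through $\Phi$ is compatible with the definition of $K_{\f_A}$, and that the very good section really corresponds to $u$-constant elements; once these are settled the remaining steps are purely formal Laurent-series arithmetic.
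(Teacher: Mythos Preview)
Your argument is correct and is exactly the unpacking the paper has in mind: the paper gives no separate proof of this proposition, merely recording that it is implied by Proposition~\ref{prop:very good section} together with the isomorphism $\Phi:\Omega_{\f_A}((u))\cong\H_{\f_A}$ and the definition of $K_{\f_A}$. One small remark: the two ``obstacles'' you flag at the end are not obstacles at all---by the paper's definition $K_{\f_A}$ is \emph{constructed} by transporting the $k((u))$-linear extension of $J_{\f_A}$ through $\Phi$, and $s^{(0)}(v)$ corresponds to the $u$-constant element $v$ by the very definition of $\Phi$, so there is nothing further to check there.
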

%%%%%%%%%%%%%%%%%%%%%%%%%%%%%%%%%%%%%%%%%%%%%%%%%%%%%%%%%%%%%%%%%%%%%%%%%
%%%%%%%%%%%%%%%%%%%%%%%%%%%%%%%%%%%%%%%%%%%%%%%%%%%%%%%%%%%%%%%%%%%%%%%%%
\section{Deformation}
In this section, $A$ always denotes a non-negatively graded smooth compact Calabi--Yau dg $k$-algebra of dimension $w$ satisfying Assumption~\ref{assum:formality} and Assumption~\ref{assum:isom}.
%%%%%%%%%%%%%%%%%%%%%%%%%%%%%%%%%%%%%%%%%%%%%%%%%%%%%%%%%%%%%%%%%%%%%%%%%
\subsection{Versal deformation}
Recall that $\Omega_{\f_A}^{w,0}=\{v\in \Omega_w(A)~|~dv=0, N_Av=0\}=k\cdot v_1$.
Under Assumption~\ref{assum:isom}, 
we can define a morphism $\Delta:\T^\bullet_{poly}(A)\longrightarrow \T^{\bullet-1}_{poly}(A)$ of graded $k$-modules by 
$i_{\Delta(X)} v_1:=B i_X v_1$, $X\in \T^\bullet_{poly}(A)$. 
Note that $\Delta$ does not depend on the particular choice of $v_1$.
Obviously, it satisfies $\Delta^2=0$ and hence it defines a differential on $\T^\bullet_{poly}(A)$.
The differential $\Delta$ does not satisfy the Leibniz rule with respect to the product $\circ$,
however, it is a part of a rich structure on $\T^\bullet_{poly}(A)$ as given below.
\begin{prop}
The tuple $(\T^\bullet_{poly}(A),d,\circ, [-,-]_G,\Delta)$ is a dGBV algebra.
Namely, we have
\[
\left[X,Y\right]_G=(-1)^{\overline{X}}\Delta(X\circ Y)-(-1)^{\overline{X}}\Delta(X)\circ Y-X\circ \Delta(Y),\ 
X,Y\in \T^\bullet_{poly}(A).
\]
\end{prop}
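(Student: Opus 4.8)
The plan is to verify the identity after transporting it through the contraction isomorphism. By Assumption~\ref{assum:isom} the map $\phi\colon \T^\bullet_{poly}(A)\to \Omega_{w-\bullet}(A)$, $\phi(X)=i_Xv_1$, is an isomorphism of graded $k$-modules, hence injective, so it suffices to prove the equality obtained by applying $i_{(-)}v_1$ to both sides. Since the claimed identity involves only $\circ$, $[-,-]_G$ and $\Delta$ (no $d$), it is an identity of elements of $\T^\bullet_{poly}(A)=H^\bullet(C^\bullet(A),\delta)$, and all of the operators $i_X, L_X, B$ act on $\Omega_\bullet(A)=H_\bullet(C_\bullet(A),\delta)$ with the relations of Proposition~\ref{prop:iL} holding there. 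Under $\phi$ the product becomes contraction, $\phi(X\circ Y)=i_Xi_Yv_1$ via $i_Xi_Y=i_{X\circ Y}$, and by definition of $\Delta$ one has $\phi(\Delta Z)=B\,\phi(Z)$. Thus the four terms of the identity become $i_{[X,Y]_G}v_1$, $B\,i_Xi_Yv_1$, $i_{\Delta X}i_Yv_1$ and $i_X\,B\,i_Yv_1$, and the whole statement reduces to a relation among the operators of Proposition~\ref{prop:iL} evaluated on $v_1$.

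First I would rewrite each term so that $B$ is pushed all the way to the right, onto $v_1$, using the Cartan homotopy $[B,i_X]=-L_X$ repeatedly. For the second term $B\,i_Xi_Yv_1$ this produces a combination of $i_Xi_Y(Bv_1)$, $i_XL_Yv_1$ and $L_Xi_Yv_1$; for the fourth term $i_X\,B\,i_Yv_1=(-1)^{\overline{Y}}i_Xi_Y(Bv_1)-i_XL_Yv_1$. For the third term, I would first use graded commutativity of $\circ$ to write $i_{\Delta X}i_Yv_1=i_{\Delta X\circ Y}v_1=\pm\,i_Y\,B\,i_Xv_1$, so that $B$ again stands directly before $v_1$ and can be moved through by the same relation, yielding $i_Xi_Y(Bv_1)$ and $i_YL_Xv_1$. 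Collecting the four contributions with their coefficients $(-1)^{\overline{X}},\,-(-1)^{\overline{X}},\,-1$, the two $i_XL_Yv_1$ terms cancel, and the surviving $L_Xi_Yv_1$ and $i_YL_Xv_1$ terms recombine, via $[i_X,L_Y]=i_{[X,Y]_G}$ together with the graded antisymmetry of $[-,-]_G$, into exactly $i_{[X,Y]_G}v_1=\phi([X,Y]_G)$. The sign bookkeeping is routine but must be carried out carefully; I expect it to close up using only the relations of Proposition~\ref{prop:iL}.

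The one genuinely non-formal point is the residual term. After the cancellations there remains a single contribution proportional to $i_Xi_Y(Bv_1)=i_{X\circ Y}(Bv_1)$, so the identity holds if and only if this vanishes for all $X,Y$; taking $X=Y=[1_A]$ and using $i_{[1_A]}=\mathrm{id}$, this is equivalent to $Bv_1=0$. This is where I expect the main obstacle to lie, and it is the step that uses the hypotheses on $A$ rather than the purely algebraic calculus. I would deduce $Bv_1=0$ from $Bv_1=\phi(\Delta[1_A])$: since $[1_A]$ is the unit of the Gerstenhaber algebra $\T^\bullet_{poly}(A)$ and $\Delta$ lowers degree by one, $\Delta[1_A]$ lies in $\T^{-1}_{poly}(A)$, which vanishes for the non-negatively graded connected algebra $A$ (equivalently, the volume class $v_1$ is $B$-closed). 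By injectivity of $\phi$ this forces $Bv_1=0$, killing the residual term and completing the proof.
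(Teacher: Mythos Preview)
Your approach is essentially the same as the paper's: both transport the identity to $\Omega_\bullet(A)$ via the contraction isomorphism $X\mapsto i_Xv_1$ and reduce everything to the calculus relations of Proposition~\ref{prop:iL}. The paper's computation is slightly more economical in that it starts from $i_{[X,Y]_G}v_1=[i_X,L_Y]v_1$, substitutes $L_Y=-[B,i_Y]$, and reads off the three $\Delta$-terms directly, whereas you move in the opposite direction by pushing $B$ to the right onto $v_1$; but this is the same manipulation organized differently.

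You are in fact more explicit than the paper on one point. Both arguments silently require $Bv_1=0$: when the paper expands $-i_X[B,i_Y]v_1$ on its way from line two to line three it drops the term $(-1)^{\overline{Y}}\,i_Xi_YBv_1$ without comment, and your residual term is precisely the same object. Your diagnosis that the whole identity is equivalent, under the calculus relations, to $Bv_1=0$ (just take $X=Y=[1_A]$) is correct. The weak spot in your write-up is the justification you propose: the assertion that $\T^{-1}_{poly}(A)=0$ for a non-negatively graded connected $A$ is not obvious, since the Hochschild cochain complex $C^\bullet(A)$ is not bounded below in total degree, and you give no argument for it. This is, however, a gap the paper shares rather than one you have introduced.
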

\begin{proof}
The equality
\begin{align*}
&i_{\left[X,Y\right]_G}v_1
=\ \left[i_X,L_Y\right]v_1\\
=&\ -i_X\left[B,i_Y\right]v_1+(-1)^{\overline{X}\cdot (\overline{Y}+1)}\left[B,i_Y\right]i_Xv_1\\
=&\ -i_X B i_Y v_1+(-1)^{\overline{X}\cdot (\overline{Y}+1)}B i_Y i_Xv_1
-(-1)^{\overline{Y}}\cdot (-1)^{\overline{X}\cdot (\overline{Y}+1)}i_YB i_Xv_1\\
=&\ -i_{X\circ \Delta(Y)}v_1+(-1)^{\overline{X}\cdot (\overline{Y}+1)}i_{\Delta(Y\circ X)}v_1
+(-1)^{(\overline{X}+1)\cdot (\overline{Y}+1)}i_{Y\circ\Delta(X)}v_1\\
=&\ -i_{X\circ \Delta(Y)}v_1+(-1)^{\overline{X}}i_{\Delta(X\circ Y)}v_1
-(-1)^{\overline{X}}i_{\Delta(X)\circ Y}v_1
\end{align*}
yields the statement.
\end{proof}
Denote by $\O_M$ the completed symmetric algebra $k[[T^2Jac(\f_A)]]$ of $T^2Jac(\f_A)$ 
and by $\m$ the maximal ideal in $k[[T^2Jac(\f_A)]]$.
Note that $\O_M$ is isomorphic to the completed symmetric algebra $k[[HH^{\bullet+2}(A)]]$ of $HH^{\bullet+2}(A)$ 
under Assumption~\ref{assum:formality}.
Let $t_1,\dots, t_{l_A}$ be the dual coordinates for the basis $\{v_1,\dots, v_{l_A}\}$ as in the proof of 
Proposition~\ref{prop:very good section}. Denote by $\T_M$ a graded $\O_M$-free module of derivations on $\O_M$, 
which satisfies 
$\T_M\cong \bigoplus_{i=1}^{l_A}\O_M\frac{\p}{\p t_i}$.
\begin{prop}\label{prop:versal deformation}
The dGBV algebra $(\T^\bullet_{poly}(A),d,\circ, [-,-]_G,\Delta)$ is smooth formal.
Namely, there exists a solution $\gamma(t)$ to the Maurer--Cartan equation in formal power series with values in $\T^\bullet_{poly}(A)$,
\begin{equation}\label{eq:Maurer--Cartan}
d\gamma(t)+\frac{1}{2}\left[\gamma(t),\gamma(t)\right]_G=0,\quad 
\gamma(t)\in \T^\bullet_{poly}(A)\widehat{\otimes}_k  \m,
\end{equation}
satisfying the following properties$:$ 
\begin{enumerate}
\item 
For $i=1,\dots, l_A$, the element
\[
\left[\frac{\p \gamma(t)}{\p t_i}\right]\in 
\T^\bullet_{poly}(A)\widehat{\otimes}_k  \m\left/\T^\bullet_{poly}(A)\widehat{\otimes}_k  \m^2\right.
\]
considered as an element in $\T^\bullet_{poly}(A)$ form a $k$-basis of $Jac(\f_A)$.
\item 
The solution $\gamma(t)$ is homogeneous in the sense that
\begin{equation}\label{eq:gamma-hom}
\gamma(t)=\sum_{i=1}^{l_A}(1-q_i)t_i\frac{\p \gamma(t)}{\p t_i}+\left[{\mathfrak deg}_A,\gamma(t)\right]_G.
\end{equation} 
\end{enumerate}
\end{prop}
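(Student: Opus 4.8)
The plan is to follow the Barannikov--Kontsevich construction, solving the Maurer--Cartan equation~\eqref{eq:Maurer--Cartan} inductively in powers of the maximal ideal $\m$ while imposing the gauge-fixing normalization $\Delta\gamma(t)=0$. The essential input is a $d\Delta$-lemma for the pair of anticommuting differentials $(d,\Delta)$ on $\T^\bullet_{poly}(A)$: every element that is $d$-closed and $\Delta$-exact lies in the image of $d\Delta$. This is exactly the Hodge-to-de Rham degeneration of Proposition~\ref{prop:Hodge to de Rham} transported to $\T^\bullet_{poly}(A)$ through the isomorphism $X\mapsto i_Xv_1$ of Assumption~\ref{assum:isom}, under which $\Delta$ corresponds to the Connes differential $B$ by its very definition $i_{\Delta X}v_1=Bi_Xv_1$. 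The point of the normalization is that, by the dGBV identity of the preceding Proposition, whenever $\Delta X=\Delta Y=0$ one has $[X,Y]_G=(-1)^{\overline{X}}\Delta(X\circ Y)$, so along the gauge slice the bracket term of~\eqref{eq:Maurer--Cartan} becomes manifestly $\Delta$-exact.

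First I would fix the first-order term. Choose representatives $X_1,\dots,X_{l_A}\in\T^\bullet_{poly}(A)$ of a $k$-basis of $Jac(\f_A)=H^\bullet(\T^\bullet_{poly}(A),d)$ lying in $\ker d\cap\ker\Delta$; such representatives exist by the same $d\Delta$-lemma. Setting $\gamma^{(1)}(t):=\sum_{i=1}^{l_A}t_iX_i$ gives property~(1) at once, since $[\p\gamma/\p t_i]\equiv X_i\pmod{\m}$. For the inductive step, suppose $\gamma_{\le n-1}:=\gamma^{(1)}+\dots+\gamma^{(n-1)}$ solves~\eqref{eq:Maurer--Cartan} modulo $\m^n$ with each $\gamma^{(j)}$ in $\ker\Delta$. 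The order-$n$ obstruction $O_n:=-\tfrac12\big[\gamma_{\le n-1},\gamma_{\le n-1}\big]_G^{(n)}$ depends only on lower-order data; it is $d$-closed by the usual Jacobi-identity argument, and by the normalization together with the dGBV identity it equals $(-1)^{\overline{\gamma}}\big(-\tfrac12\big)\Delta\big((\gamma_{\le n-1}\circ\gamma_{\le n-1})^{(n)}\big)$, hence is $\Delta$-exact. The $d\Delta$-lemma then produces $\sigma_n$ with $O_n=d\Delta\sigma_n$, and setting $\gamma^{(n)}:=\Delta\sigma_n$ yields $d\gamma^{(n)}=O_n$ and $\Delta\gamma^{(n)}=\Delta^2\sigma_n=0$, so the induction continues and unobstructedness (``smooth formality'') follows by summing over $n$.

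For property~(2) I would run the same induction equivariantly for the operator $E:=\sum_{i=1}^{l_A}(1-q_i)t_i\frac{\p}{\p t_i}+\big[{\mathfrak deg}_A,-\big]_G$. Using the ``Euler identity''~\eqref{eq:f and deg} one checks $\big[[{\mathfrak deg}_A,-]_G,d\big]=d$ and, via $X\mapsto i_Xv_1$ together with $[L_{{\mathfrak deg}_A},B]=0$, that $\Delta$ commutes with $[{\mathfrak deg}_A,-]_G$; the external part of $E$ commutes with $d$ and $\Delta$ and is a derivation of $\circ$ and $[-,-]_G$, while $[{\mathfrak deg}_A,-]_G$ is a derivation of both by Jacobi and the differential Gerstenhaber compatibility, so $E$ is a derivation of $[-,-]_G$. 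Choosing the $X_i$ to be $[{\mathfrak deg}_A,-]_G$-eigenvectors of eigenvalue $q_i$ (possible because, under $X\mapsto i_Xv_1$, the operator $[{\mathfrak deg}_A,-]_G$ corresponds to $N_A$ and $N_Av_i=q_iv_i$) makes $\gamma^{(1)}$ an $E$-eigenvector of weight one. Since $E$ is a derivation of the bracket and preserves the $t$-adic order, each obstruction $O_n$ is then $E$-homogeneous of weight two, and because $d$ raises $E$-weight by one while $\Delta$ preserves it, $\sigma_n$ may be chosen $E$-homogeneous so that $\gamma^{(n)}=\Delta\sigma_n$ has weight one. Summing gives $E\gamma(t)=\gamma(t)$, which is precisely~\eqref{eq:gamma-hom}.

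The main obstacle is the clean transfer of the Hodge-to-de Rham degeneration into the $d\Delta$-lemma in the exact form required, and the verification that the gauge condition $\Delta\gamma=0$ and the $E$-homogeneity can be maintained simultaneously at every order. Once the $d\Delta$-lemma and the grading compatibilities $\big[[{\mathfrak deg}_A,-]_G,d\big]=d$ and $[{\mathfrak deg}_A,\Delta]=0$ are in hand, the remaining work is the standard, if lengthy, Barannikov--Kontsevich bookkeeping, with the only genuine care needed for the degree conventions entering the sign $(-1)^{\overline{\gamma}}$.
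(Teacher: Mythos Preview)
Your proposal is correct and follows essentially the same route as the paper. The paper's own proof is a single sentence: it invokes Terilla's smoothness theorem for differential BV algebras, checking that its hypothesis---the degeneration of the spectral sequence---holds by Proposition~\ref{prop:Hodge to de Rham}. Terilla's theorem is proved precisely by the Barannikov--Kontsevich inductive argument you spell out (gauge condition $\Delta\gamma=0$, the dGBV identity turning the bracket into a $\Delta$-exact term, and the degeneration/$d\Delta$-type input to kill obstructions), so you are unpacking the cited black box rather than taking a different path.

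One small remark: the paper's proof does not separately address property~(2), tacitly relying on the fact that Terilla's construction can be run compatibly with the internal grading. Your explicit treatment of this---identifying $[{\mathfrak deg}_A,-]_G$ with $L_{{\mathfrak deg}_A}$ via $i_{-}v_1$ (using $L_{{\mathfrak deg}_A}v_1=0$), invoking $[L_{{\mathfrak deg}_A},B]=0$ to get $[\,[{\mathfrak deg}_A,-]_G,\Delta\,]=0$, and then choosing the $\sigma_n$ $E$-homogeneously---fills in what the paper leaves implicit. The only place to be careful is your claim that the $d\Delta$-lemma ``is exactly'' Hodge-to-de Rham degeneration: degeneration is what Terilla actually uses and is what Proposition~\ref{prop:Hodge to de Rham} supplies, and it is enough for the inductive step you need, but it is not literally the same statement as the full $d\Delta$-lemma in general.
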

\begin{proof}
We can apply the Terilla's result, Theorem~2 in \cite{ter:1}, since our dGBV algebra 
$(\T^\bullet_{poly}(A),d,\circ, [-,-]_G,\Delta)$ satisfies his ``degeneration of the spectral sequence" condition
due to the Hodge to de Rham degeneration of our filtered de Rham cohomology $\H_{\f_A}$ 
(Proposition~\ref{prop:Hodge to de Rham}).
\end{proof}
Let $\gamma(t)$ be as in Proposition~\ref{prop:versal deformation}.
Define a formal power series $\ff_A$ with values in $\T^\bullet_{poly}(A)$ as
\begin{equation}
\ff_A:=\f_A\otimes 1+\gamma(t).
\end{equation}
Then it follows that $\left[\ff_A,\ff_A\right]_G=0$ from the Maurer--Cartan equation~\eqref{eq:Maurer--Cartan}.
Define an  $\O_M$-endomorphism $d_\gamma$ on $\T^\bullet_{poly}(A)\widehat{\otimes}_k \O_M$ by 
\begin{equation}
d_\gamma X:=\left[\ff_A,X\right]_G,\quad X\in \T^\bullet_{poly}(A)\widehat{\otimes}_k  \O_M. 
\end{equation}
It follows that $d_\gamma^2=0$ since $\left[\ff_A,\ff_A\right]_G=0$.
\begin{prop}
For all $X,Y\in \T^\bullet_{poly}(A)\widehat{\otimes}_k \O_M$, we have 
\[
d_\gamma\left[X, Y\right]_G=\left[d_\gamma(X),Y\right]_G+(-1)^{\overline{X}}\left[X,d_\gamma (Y)\right]_G, 
\]
\[
d_\gamma(X\circ Y)=d_\gamma (X)\circ Y+(-1)^{\overline{X}}X\circ d_\gamma (Y). 
\]
Namely, the triple $(d_\gamma, \circ, [-,-]_G)$ equip $\T^\bullet_{poly}(A)\widehat{\otimes}_k  \O_M$ with 
a structure of differential Gerstenharber algebra.
\end{prop}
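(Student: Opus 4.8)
The plan is to treat both identities as formal consequences of the Gerstenhaber-algebra structure already established on $\T^\bullet_{poly}(A)$, exploiting that $d_\gamma=[\ff_A,-]_G$ is structurally identical to the differential $d=[\f_A,-]_G$: both are the adjoint action of a Maurer--Cartan element of degree $\overline{\ff_A}=\overline{\f_A}=2$. Since the paper has already recorded that $(\T^\bullet_{poly}(A),d,\circ,[-,-]_G)$ is a differential Gerstenhaber algebra and that $[\ff_A,\ff_A]_G=0$ (hence $d_\gamma^2=0$), the only genuinely new point is to verify that the two derivation axioms transport across the completed tensor product $\T^\bullet_{poly}(A)\widehat{\otimes}_k\O_M$. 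First I would fix conventions by extending $\circ$, $[-,-]_G$, and therefore $d_\gamma$, to $\T^\bullet_{poly}(A)\widehat{\otimes}_k\O_M$ by $\O_M$-(bi)linearity with the Koszul sign rule, and record that $\ff_A=\f_A\otimes 1+\gamma(t)$ is a well-defined element of total degree $2$, each monomial of $\gamma(t)$ carrying combined internal degree $2$.

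For the product rule I would specialize the Poisson compatibility
\[
[X,Y\circ Z]_G=[X,Y]_G\circ Z+(-1)^{(\overline{X}+1)\overline{Y}}\,Y\circ[X,Z]_G
\]
(extended $\O_M$-multilinearly) to $X=\ff_A$. Because $\overline{\ff_A}=2$, the prefactor $(-1)^{(\overline{\ff_A}+1)\overline{Y}}$ reduces to $(-1)^{\overline{Y}}$, and after relabelling this reads $d_\gamma(X\circ Y)=d_\gamma(X)\circ Y+(-1)^{\overline{X}}X\circ d_\gamma(Y)$, exactly as claimed.

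For the bracket rule I would apply the graded Jacobi identity for $[-,-]_G$ to the triple $(\ff_A,X,Y)$; writing it in the derivation form
\[
[\ff_A,[X,Y]_G]_G=[[\ff_A,X]_G,Y]_G+(-1)^{(\overline{\ff_A}-1)\overline{X}}[X,[\ff_A,Y]_G]_G
\]
and inserting $\overline{\ff_A}=2$ collapses the sign to $(-1)^{\overline{X}}$, which is precisely the assertion that $d_\gamma$ is a graded derivation of the bracket. Together with $d_\gamma^2=0$ this yields the differential Gerstenhaber algebra structure.

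The step I expect to demand the most care is not either specialization---each is a one-line instance of an axiom already available for $\T^\bullet_{poly}(A)$---but the bookkeeping that lets me treat $\ff_A$ as a single degree-$2$ element after passing to $\T^\bullet_{poly}(A)\widehat{\otimes}_k\O_M$. Concretely, I would check monomial-by-monomial in $\gamma(t)$ that the Koszul signs produced by the (possibly odd) coordinates of $\O_M$ recombine so that the internal $\T^\bullet_{poly}(A)$-degree and the $\O_M$-degree always add to the single degree governing the signs above; once this is confirmed, the completed-tensor versions of the Poisson and Jacobi identities follow by $\m$-adic continuity, and the displayed signs are exactly those of the statement.
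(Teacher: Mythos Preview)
Your approach is exactly the paper's: the paper's entire proof reads ``The statement follows since the bracket $[-,-]_G$ satisfies the Jacobi and the Poisson identities,'' and you have simply unpacked this by specializing the Poisson compatibility and the graded Jacobi identity to the slot $\ff_A$. Your additional remarks about extending $\circ$ and $[-,-]_G$ $\O_M$-bilinearly to the completed tensor product and tracking Koszul signs from the graded coordinates are more careful than the paper, which takes these points for granted.
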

\begin{proof}
The statement follows since the bracket $[-,-]_G$ satisfies the Jacobi and the Poisson identities. 
\end{proof}
\begin{defn}
The graded $\O_M$-module
\begin{equation}
Jac(\ff_A):=H^\bullet(\T^\bullet_{poly}(A)\widehat{\otimes}_k  \O_M,d_\gamma),
\end{equation}
is called the Jacobian ring of $\ff_A$.
\end{defn}
Note that the property $\left[\ff_A,\ff_A\right]_G=0$ implies 
\[
d_\gamma\left(\ff_A\right)=0,
\quad d_\gamma\left(\frac{\p \ff_A}{\p t_i}\right)=\left[\ff_A,\frac{\p \ff_A}{\p t_i}\right]_G=0,\  i=1,\dots, l_A.
\]
\begin{prop}
The morphism $\rho:\T_M\longrightarrow Jac(\ff_A)$ of graded $\O_M$-modules defined by
\begin{equation}\label{eq:KS}
\rho:\T_M\longrightarrow Jac(\ff_A),\quad \frac{\p}{\p t_i}\mapsto \left[\frac{\p \ff_A}{\p t_i}\right],\ i=1,\dots, l_A,
\end{equation} 
is an isomorphism.
\end{prop}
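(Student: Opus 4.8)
The plan is to prove that $\rho$ is an isomorphism by a graded Nakayama-type argument over the complete local ring $\O_M$, reducing the question to the already-understood behaviour at the central fibre $t=0$. First I would record that $\rho$ is well defined and $\O_M$-linear: differentiating the Maurer--Cartan equation $[\ff_A,\ff_A]_G=0$ gives $d_\gamma(\p\ff_A/\p t_i)=[\ff_A,\p\ff_A/\p t_i]_G=0$, as already observed just before the statement, so each $[\p\ff_A/\p t_i]$ is a genuine class in $Jac(\ff_A)$; and $\O_M$-linearity is immediate because $\T_M=\bigoplus_i\O_M\,\p/\p t_i$ is $\O_M$-free on the $\p/\p t_i$.

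The key structural input I would extract from Proposition~\ref{prop:versal deformation} (smooth formality via Terilla) together with the Hodge-to-de Rham degeneration (Proposition~\ref{prop:Hodge to de Rham}) is that the cohomology $Jac(\ff_A)=H^\bullet(\T^\bullet_{poly}(A)\widehat{\otimes}_k\O_M,d_\gamma)$ is a \emph{free} graded $\O_M$-module which is moreover compatible with base change, so that reduction modulo the maximal ideal $\m$ yields
\[
Jac(\ff_A)\otimes_{\O_M}k\ \cong\ H^\bullet(\T^\bullet_{poly}(A),d)=Jac(\f_A),
\]
since $\ff_A|_{t=0}=\f_A$ and $d_\gamma|_{t=0}=[\f_A,-]_G=d$. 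This is precisely the content of the degeneration condition in Terilla's theorem: the spectral sequence of the $\m$-adic filtration degenerates, forcing the ranks of the cohomology modules to be constant, and a complete-local freeness argument upgrades this to $\O_M$-freeness with $\operatorname{rank}_{\O_M}Jac(\ff_A)=\dim_k Jac(\f_A)=l_A$.

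Next I would compute the reduction $\rho\otimes_{\O_M}k$. Since $\ff_A=\f_A\otimes 1+\gamma(t)$ with $\f_A$ constant in $t$, we have $\p\ff_A/\p t_i=\p\gamma(t)/\p t_i$, whose class modulo $\m$ is exactly $[\p\gamma(t)/\p t_i]\in Jac(\f_A)$. By property~(1) of Proposition~\ref{prop:versal deformation} these classes form a $k$-basis of $Jac(\f_A)\cong Jac(\ff_A)\otimes_{\O_M}k$, so $\rho\otimes k$ carries the $\O_M$-basis $\{\p/\p t_i\}$ of $\T_M$ to a $k$-basis of $Jac(\ff_A)\otimes_{\O_M}k$ and is therefore an isomorphism. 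The homogeneity relation~\eqref{eq:gamma-hom} in property~(2) guarantees that $\rho$ respects the gradings on both sides, so this is an isomorphism of \emph{graded} $k$-modules.

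Finally, I would conclude by the graded Nakayama lemma over the complete local ring $\O_M$: a map of finitely generated $\O_M$-modules whose reduction modulo $\m$ is surjective is itself surjective, so $\rho$ is surjective. Since $\T_M$ and $Jac(\ff_A)$ are both free graded $\O_M$-modules of the same finite rank $l_A$, a surjection between them is automatically an isomorphism, as after choosing bases it becomes a square matrix which, being surjective, has unit determinant. I expect the genuine obstacle to be the first step, namely rigorously establishing the $\O_M$-freeness of $Jac(\ff_A)$ and its compatibility with base change; the rest is the standard Kodaira--Spencer/Nakayama package. This freeness is exactly where the Hodge-to-de Rham degeneration is indispensable, for without it the cohomology rank could jump off the central fibre and $\rho$ would fail to be an isomorphism.
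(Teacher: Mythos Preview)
Your argument is correct and is precisely the standard Kodaira--Spencer/Nakayama package that the paper leaves to the reader: the paper states this proposition without any proof, evidently regarding it as a direct consequence of Proposition~\ref{prop:versal deformation}(1) together with the freeness input from the Hodge--to--de Rham degeneration, and indeed invokes ``Nakayama's Lemma'' explicitly for the analogous freeness statement about $\Omega_{\ff_A}$ just a few lines later. Your identification of the one nontrivial point---that $Jac(\ff_A)$ is $\O_M$-free with base change $Jac(\ff_A)\otimes_{\O_M}k\cong Jac(\f_A)$, which is where Terilla's degeneration hypothesis is consumed---is exactly right.
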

The isomorphism~\eqref{eq:KS} enable us to introduce two particular elements of $\T_M$ which play important roles later.
\begin{defn}
The element $e\in\T_M$ such that $\rho(e)=[1_A]$ is called the primitive vector field.
The element $E\in\T_M$ such that $\rho(E)=[\ff_A]$ is called the Euler vector field.
\end{defn}
The equation~\eqref{eq:gamma-hom} in Proposition~\ref{prop:versal deformation} implies the following.
\begin{prop}\label{prop:Euler}
We have 
\begin{equation}
E=\sum_{i=1}^{l_A}(1-q_i)t_i\frac{\p }{\p t_i}
\end{equation}
and the ``Euler's identity":
\begin{equation} 
\ff_A=E\ff_A+\left[{\mathfrak deg}_A,\ff_A\right]_G.
\end{equation} 
\end{prop}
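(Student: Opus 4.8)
The plan is to deduce both equations from the defining property $\rho(E)=[\ff_A]$ of the Euler vector field together with the homogeneity \eqref{eq:gamma-hom} of the Maurer--Cartan solution $\gamma(t)$ from Proposition~\ref{prop:versal deformation}. Since $\rho$ is an isomorphism by \eqref{eq:KS}, it suffices to verify that the vector field $\sum_{i=1}^{l_A}(1-q_i)t_i\frac{\p}{\p t_i}$ is sent by $\rho$ to the class $[\ff_A]$, and then read off the first equation from injectivity of $\rho$.

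First I would compute the image of this vector field. Because $\rho$ is $\O_M$-linear and $\frac{\p\ff_A}{\p t_i}=\frac{\p\gamma(t)}{\p t_i}$, the summand $\f_A\otimes 1$ being constant in $t$, one gets
\[
\rho\Bigl(\sum_{i=1}^{l_A}(1-q_i)t_i\frac{\p}{\p t_i}\Bigr)=\Bigl[\sum_{i=1}^{l_A}(1-q_i)t_i\frac{\p\gamma(t)}{\p t_i}\Bigr].
\]
Applying the homogeneity identity \eqref{eq:gamma-hom} rewrites the argument as $\gamma(t)-[{\mathfrak deg}_A,\gamma(t)]_G$, so the class above equals $\bigl[\gamma(t)-[{\mathfrak deg}_A,\gamma(t)]_G\bigr]$. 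To match this with $[\ff_A]=[\f_A\otimes 1+\gamma(t)]$ I must show that $\f_A\otimes 1+[{\mathfrak deg}_A,\gamma(t)]_G$ is $d_\gamma$-exact.

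The key step is to recognise this element as $-d_\gamma({\mathfrak deg}_A)$. Expanding $d_\gamma({\mathfrak deg}_A)=[\ff_A,{\mathfrak deg}_A]_G=[\f_A,{\mathfrak deg}_A]_G+[\gamma(t),{\mathfrak deg}_A]_G$ and using the graded antisymmetry of the Gerstenhaber bracket (noting the parity of ${\mathfrak deg}_A$) together with \eqref{eq:f and deg}, which gives $[\f_A,{\mathfrak deg}_A]_G=-[{\mathfrak deg}_A,\f_A]_G=-\f_A$, I expect to obtain $d_\gamma({\mathfrak deg}_A)=-\f_A\otimes 1-[{\mathfrak deg}_A,\gamma(t)]_G$. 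Hence $\f_A\otimes 1+[{\mathfrak deg}_A,\gamma(t)]_G$ is $d_\gamma$-exact, its class in $Jac(\ff_A)$ vanishes, and therefore $\rho(\sum_i(1-q_i)t_i\frac{\p}{\p t_i})=[\ff_A]=\rho(E)$; injectivity of $\rho$ then yields $E=\sum_{i=1}^{l_A}(1-q_i)t_i\frac{\p}{\p t_i}$.

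For the second equation I would simply substitute this expression for $E$. Since $\f_A\otimes 1$ is $t$-independent, $E\ff_A=\sum_i(1-q_i)t_i\frac{\p\gamma(t)}{\p t_i}$, while $[{\mathfrak deg}_A,\ff_A]_G=\f_A\otimes 1+[{\mathfrak deg}_A,\gamma(t)]_G$ by \eqref{eq:f and deg}. Adding these and invoking \eqref{eq:gamma-hom} once more collapses the sum to $\gamma(t)+\f_A\otimes 1=\ff_A$, so the Euler identity holds as a genuine equality in $\T^\bullet_{poly}(A)\widehat{\otimes}_k\O_M$, not merely modulo $d_\gamma$-exact terms. The main obstacle I anticipate is the sign bookkeeping in the graded antisymmetry of $[-,-]_G$ on the shifted Lie algebra $\T^{\bullet+1}_{poly}(A)$: one must confirm that the parities of ${\mathfrak deg}_A$, $\f_A$ and $\gamma(t)$ conspire to produce $[\f_A,{\mathfrak deg}_A]_G=-\f_A$ and $[\gamma(t),{\mathfrak deg}_A]_G=-[{\mathfrak deg}_A,\gamma(t)]_G$ with no stray signs; once this is pinned down, everything else is formal manipulation.
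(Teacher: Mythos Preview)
Your proof is correct and uses the same ingredients as the paper, which simply states that the proposition follows from~\eqref{eq:gamma-hom}. The sign checks you flag work out exactly as you hope: since $\overline{{\mathfrak deg}_A}=1$, the graded antisymmetry $[X,{\mathfrak deg}_A]_G=-(-1)^{(\overline X-1)\cdot 0}[{\mathfrak deg}_A,X]_G=-[{\mathfrak deg}_A,X]_G$ holds for every $X$, so in particular $[\f_A,{\mathfrak deg}_A]_G=-\f_A$ and $[\gamma(t),{\mathfrak deg}_A]_G=-[{\mathfrak deg}_A,\gamma(t)]_G$.

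One small streamlining: the order of your two steps can profitably be reversed. If you first set $E':=\sum_i(1-q_i)t_i\,\p/\p t_i$ and add~\eqref{eq:f and deg} to~\eqref{eq:gamma-hom}, you obtain the Euler identity $\ff_A=E'\ff_A+[{\mathfrak deg}_A,\ff_A]_G$ as a chain-level equality with no cohomological argument at all. Then, since $[{\mathfrak deg}_A,\ff_A]_G=-d_\gamma({\mathfrak deg}_A)$, passing to $Jac(\ff_A)$ gives $\rho(E')=[E'\ff_A]=[\ff_A]=\rho(E)$, and injectivity of $\rho$ yields $E=E'$. This is exactly the computation you do, just reordered so that the exactness of $\f_A\otimes 1+[{\mathfrak deg}_A,\gamma(t)]_G$ is invoked once rather than being set up as a separate lemma before the Euler identity is available.
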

Consider the $\O_M$-linear extensions of the morphisms $i$ and $L$ of 
graded $k$-modules defined in Proposition~\ref{prop:iL} and 
define a morphism $d_\gamma$ of graded $\O_M$-modules on $\Omega_\bullet(A)\widehat{\otimes}_k \O_M$ as 
\begin{equation} 
d_\gamma:=-L_{\ff_A},
\end{equation} 
which is a deformation by $\gamma$ of the boundary operator $d$ on $\Omega_\bullet(A)$. 
The equalities in Proposition~\ref{prop:iL} lead the following.
\begin{prop}\label{prop:46}
We have 
\[
d_\gamma^2=0,\quad\left[B,i_{\ff_A}\right]=-L_{\ff_A}=d_{\gamma},\quad \left[B,d_\gamma\right]=0,
\]
\[
\left[d_\gamma,i_X\right]=i_{d_\gamma X},\ X\in \T^{\bullet}_{poly}(A)\widehat{\otimes}_k \O_M.
\]
In particular, $d_\gamma$ defines a boundary operator on $\Omega_\bullet(A)\widehat{\otimes}_k \O_M$. 
\end{prop}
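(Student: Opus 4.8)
The plan is to derive every identity as a formal, $\O_M$-linear consequence of the Cartan calculus of Proposition~\ref{prop:iL}, the only genuinely new input being $[\ff_A,\ff_A]_G=0$, which holds because $\gamma(t)$ solves the Maurer--Cartan equation~\eqref{eq:Maurer--Cartan}. First I would note that the maps $i$, $L$ and $B$ are $\O_M$-linear, indeed $\m$-adically continuous, so all relations of Proposition~\ref{prop:iL} extend verbatim to $\T^\bullet_{poly}(A)\widehat{\otimes}_k\O_M$ and $\Omega_\bullet(A)\widehat{\otimes}_k\O_M$ and may be evaluated at $X=\ff_A$. Here one must keep in mind that $\ff_A$ is of degree two, so that $L_{\ff_A}=-d_\gamma$ is an odd operator of degree one.

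For $d_\gamma^2=0$ I would use $[L_X,L_Y]=L_{[X,Y]_G}$ at $X=Y=\ff_A$. Since $L_{\ff_A}$ is odd the graded bracket is the anticommutator, giving $2L_{\ff_A}^2=[L_{\ff_A},L_{\ff_A}]=L_{[\ff_A,\ff_A]_G}=L_0=0$, whence $d_\gamma^2=L_{\ff_A}^2=0$. The relation $[B,i_{\ff_A}]=-L_{\ff_A}$ is simply $[B,i_X]=-L_X$ at $X=\ff_A$, and the right-hand side equals $d_\gamma$ by definition; likewise $[B,d_\gamma]=-[B,L_{\ff_A}]=0$ is $[B,L_X]=0$ at $X=\ff_A$.

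For the last identity I would start from $[i_X,L_Y]=i_{[X,Y]_G}$ with $Y=\ff_A$, so that $[i_X,L_{\ff_A}]=i_{[X,\ff_A]_G}$, and then rewrite both sides to reach $[d_\gamma,i_X]=i_{d_\gamma X}$. On the left, graded antisymmetry of the operator commutator, using that $L_{\ff_A}$ is odd, turns $[i_X,L_{\ff_A}]$ into the sign $(-1)^{\overline{X}}$ times $[d_\gamma,i_X]$; on the right, graded antisymmetry of the Gerstenhaber bracket (again with $\overline{\ff_A}=2$) turns $[X,\ff_A]_G$ into the same sign $(-1)^{\overline{X}}$ times $[\ff_A,X]_G=d_\gamma X$, so the two factors cancel. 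In effect this reproduces, with $\f_A$ replaced by $\ff_A$, the undeformed identity $[d,i_X]=i_{dX}$ recorded just before Proposition~\ref{prop:iL}, and the asserted $d_\gamma$-closedness of $\Omega_\bullet(A)\widehat{\otimes}_k\O_M$ then follows from $d_\gamma^2=0$.

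The calculations are all routine; the only care needed is the graded-sign bookkeeping in the last identity and the verification that passing to the $\m$-adic completion is harmless, the latter being immediate because each relation holds order by order in $\m$ and $\gamma(t)$ has no constant term. The one substantive ingredient, as opposed to formal manipulation of Proposition~\ref{prop:iL}, is the Maurer--Cartan equation, which is precisely what guarantees $[\ff_A,\ff_A]_G=0$ and hence $d_\gamma^2=0$; everything else expresses that $L$ and $i$ intertwine the Gerstenhaber algebra structure on polyvectors with operators on Hochschild chains.
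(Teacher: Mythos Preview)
Your proposal is correct and follows essentially the same approach as the paper, which simply states that the equalities in Proposition~\ref{prop:iL} lead to the result; you have merely spelled out the straightforward sign bookkeeping that this entails. One tiny slip of wording: the undeformed identity $[d,i_X]=i_{dX}$ you invoke is recorded just \emph{after} Proposition~\ref{prop:iL}, not before.
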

%%%%%%%%%%%%%%%%%%%%%%%%%%%%%%%%%%%%%%%%%%%%%%%%%%%%%%%%%%%%%%%%%%%%%%%%%
\subsection{Deformed filtered de Rham cohomology $\H_{\ff_A}$}
\begin{defn}
Let $u$ be a formal variable of degree two.
Define a graded $k((u))\widehat{\otimes}\O_M$-module, called the {\it deformed filtered de Rham cohomology}, by
\begin{equation}
\H_{\ff_A}:=H_\bullet(T^{-w}\Omega_\bullet (A)((u))\widehat{\otimes}_k \O_M,d_\gamma+uB)
\end{equation}
and for any integer $p\in\ZZ$ the graded $\O_M[[u]]$-submodules of $\H_{\ff_A}$
\begin{equation}
\H_{\ff_A}^{(-p)}:=H_\bullet(T^{-w}\Omega_\bullet (A)[[u]]u^{p}\widehat{\otimes}_k  \O_M,d_\gamma +uB).
\end{equation}
Define a graded $\O_M$-module $\Omega_{\ff_A}$ by
\begin{equation}
\Omega_{\ff_A}:=H_\bullet(T^{-w}\Omega_\bullet(A)\widehat{\otimes}_k  \O_M,d_\gamma).
\end{equation}
\end{defn}
Proposition~\ref{prop:Hodge to de Rham} and Nakayama's Lemma imply the following. 
\begin{prop}
For all $p\in\ZZ$, there exists an exact sequence of graded $\O_M$-modules
\[
0\longrightarrow \H_{\ff_A}^{(-p-1)}\longrightarrow 
\H_{\ff_A}^{(-p)} \stackrel{}{\longrightarrow }
\Omega_{\ff_A} \longrightarrow 0.
\]
\end{prop}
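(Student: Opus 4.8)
The plan is to realize the sequence as the long exact homology sequence of a short exact sequence of complexes, and then to reduce its exactness to the undeformed statement of Proposition~\ref{prop:Hodge to de Rham} by a Nakayama argument over the complete local ring $\O_M$. Concretely, multiplication by $u$ gives an inclusion $T^{-w}\Omega_\bullet(A)[[u]]u^{p+1}\widehat{\otimes}_k\O_M \hookrightarrow T^{-w}\Omega_\bullet(A)[[u]]u^{p}\widehat{\otimes}_k\O_M$ of subcomplexes of $(T^{-w}\Omega_\bullet(A)((u))\widehat{\otimes}_k\O_M,\,d_\gamma+uB)$, and its quotient is $(T^{-w}\Omega_\bullet(A)\widehat{\otimes}_k\O_M,\,d_\gamma)$, since $uB$ raises the $u$-order and hence acts as zero on the associated graded. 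Taking homology yields the long exact sequence
\[
\cdots \longrightarrow \H_{\ff_A}^{(-p-1)} \xrightarrow{\ \iota\ } \H_{\ff_A}^{(-p)} \xrightarrow{\ r\ } \Omega_{\ff_A} \xrightarrow{\ \partial\ } \H_{\ff_A}^{(-p-1)} \longrightarrow \cdots,
\]
so that the desired short exact sequence is equivalent to the surjectivity of $r$ in every degree: surjectivity of $r$ forces $\partial=0$ by exactness at $\Omega_{\ff_A}$, and then exactness of the long sequence gives the injectivity of $\iota$ and exactness in the middle for free.

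Next I would establish the surjectivity of $r$ by reduction modulo the maximal ideal $\m$ of $\O_M$. Since the Maurer--Cartan solution satisfies $\gamma(t)\in\T^\bullet_{poly}(A)\widehat{\otimes}_k\m$, we have $\ff_A\equiv\f_A$ and $d_\gamma\equiv d \pmod{\m}$; as the complexes are free over $\O_M$, tensoring down along $\O_M\to\O_M/\m=k$ recovers the undeformed complexes and identifies the reduction of $r$ with the map $r^{(-p)}$ of Proposition~\ref{prop:Hodge to de Rham}, which is surjective there. The cokernel $\mathrm{coker}(r)$ is a quotient of $\Omega_{\ff_A}$, hence a finitely generated $\O_M$-module, because its reduction $\Omega_{\ff_A}\otimes_{\O_M}k\cong\Omega_{\f_A}$ is finite dimensional and $\O_M$ is complete local. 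Since $-\otimes_{\O_M}k$ is right exact it commutes with cokernels, so $\mathrm{coker}(r)\otimes_{\O_M}k\cong\mathrm{coker}(r^{(-p)})=0$, and Nakayama's lemma then gives $\mathrm{coker}(r)=0$. The long exact sequence above now delivers the proposition.

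The step I expect to be most delicate is the base change: I must know that forming homology commutes with reduction modulo $\m$, so that $r\otimes_{\O_M}k$ really is the undeformed map rather than differing from it by $\mathrm{Tor}_1^{\O_M}(-,k)$ corrections, and that the filtered pieces $\H_{\ff_A}^{(-p)}$ behave well with respect to both the $\m$-adic and the $u$-adic topologies on the completed tensor products. This is controlled by the freeness of $T^{-w}\Omega_\bullet(A)\widehat{\otimes}_k\O_M$ over $\O_M$ together with the Hodge-to-de-Rham degeneration already built into Proposition~\ref{prop:Hodge to de Rham}, which ensures that the relevant homology has no jumps under specialization; it is exactly here that the completeness of $\O_M$ and the finite generation of $\Omega_{\ff_A}$ license the use of Nakayama. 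Once this is in place, the remaining verifications---that $uB$ vanishes on the associated graded and the formal bookkeeping of the long exact sequence---are routine.
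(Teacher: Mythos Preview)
Your approach is essentially that of the paper, which simply states that the result follows from Proposition~\ref{prop:Hodge to de Rham} and Nakayama's Lemma; you have spelled out precisely how these two ingredients combine via the long exact sequence and reduction modulo $\m$. The base-change subtlety you flag is exactly the point the paper's one-line proof leaves implicit.
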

%%%%%%%%%%%%%%%%%%%%%%%%%%%%%%%%%%%%%%%%%%%%%%%%%%%%%%%%%%%%%%%%%%%%%%%%%
\subsection{Gau\ss--Manin connection on $\H_{\ff_A}$}
Set
$\T_{\widehat{\AA}^1_{u}\times M}:=\O_M[[u]]\frac{d}{du}\oplus \O_M[[u]]\otimes_{\O_M}\T_M$ and
define a morphism of graded $k$-modules
$\nabla^\gamma:\T_{\widehat{\AA}^1_{u}\times M}\otimes_k  \left(
\Omega_\bullet (A)\widehat{\otimes}_k \O_M ((u))\right)\to \Omega_\bullet (A)\widehat{\otimes}_k \O_M((u))$ by 
\[
\nabla^\gamma_{\frac{d }{du}}:=\frac{d }{du}-\frac{1}{u^2} i_{\ff_A},\quad
\nabla^\gamma_{\frac{\p}{\p t_i}}:=\frac{\p}{\p t_i}+\frac{1}{u} i_{\frac{\p \ff_A}{\p t_i}},\ i=1,\dots, l_A.
\]
\begin{prop}
The morphism of graded $k$-modules $\nabla^\gamma$ is a flat connection which satisfies
\[
\left[\nabla^\gamma_{u\frac{d}{du}},d_\gamma+uB\right]=d_\gamma+uB,\quad
\left[\nabla^\gamma_{\frac{\p}{\p t_i}},d_\gamma+uB\right]=0,\ i=1,\dots, l_A.
\]
Therefore, $\nabla^\gamma$ induces a connection on $\H_{\ff_A}$.
Moreover, we have 
\[
\nabla^\gamma_{\frac{\p}{\p t_i}}\left(\H_{\ff_A}^{(-1)} \right)\subset \H_{\ff_A}^{(0)},\ i=1,\dots, l_A,\quad
\nabla^\gamma_{u\frac{d}{du}+E}\left(\H_{\ff_A}^{(0)} \right)\subset \H_{\ff_A}^{(0)}.
\]
\end{prop}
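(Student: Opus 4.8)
The plan is to reduce each assertion to the Cartan calculus of Proposition~\ref{prop:iL}, its deformed form in Proposition~\ref{prop:46}, the relations $d_\gamma\ff_A=0$ and $d_\gamma\frac{\p\ff_A}{\p t_i}=0$ recorded just after the definition of $Jac(\ff_A)$, and the ``Euler's identity'' of Proposition~\ref{prop:Euler}. That $\nabla^\gamma$ is a genuine connection is immediate, since each $i_X$ is $\O_M((u))$-linear while $\frac{d}{du}$ and $\frac{\p}{\p t_i}$ are derivations on $\O_M((u))$; thus only the bracket relations with $d_\gamma+uB$, the flatness, and the two filtration statements require real work.

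For the $u$-direction I would compute $[\nabla^\gamma_{u\frac{d}{du}},d_\gamma+uB]$ term by term: since $d_\gamma,B$ are $u$-independent, $[u\frac{d}{du},d_\gamma]=0$ and $[u\frac{d}{du},uB]=uB$; next $[-\frac{1}{u}i_{\ff_A},d_\gamma]=0$ because $[d_\gamma,i_{\ff_A}]=i_{d_\gamma\ff_A}$ and $d_\gamma\ff_A=0$, while $[-\frac{1}{u}i_{\ff_A},uB]=[B,i_{\ff_A}]=d_\gamma$ by Proposition~\ref{prop:46}; summing gives $d_\gamma+uB$. For the $t_i$-direction, $[\frac{\p}{\p t_i},uB]=0$ and $[\frac{\p}{\p t_i},d_\gamma]=-L_{\frac{\p\ff_A}{\p t_i}}$ (differentiating $d_\gamma=-L_{\ff_A}$), whereas $[\frac{1}{u}i_{\frac{\p\ff_A}{\p t_i}},d_\gamma]=0$ and $[\frac{1}{u}i_{\frac{\p\ff_A}{\p t_i}},uB]=L_{\frac{\p\ff_A}{\p t_i}}$, which cancel to $0$. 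As $[\nabla^\gamma_X,d_\gamma+uB]$ is in each case a scalar multiple of $d_\gamma+uB$, the operator $\nabla^\gamma_X$ carries $(d_\gamma+uB)$-cocycles to cocycles and coboundaries to coboundaries, so $\nabla^\gamma$ descends to $\H_{\ff_A}$.

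Flatness I would check on the coordinate fields. In $[\nabla^\gamma_{\frac{\p}{\p t_i}},\nabla^\gamma_{\frac{\p}{\p t_j}}]$ the mixed terms produce $\frac{1}{u}\bigl(i_{\frac{\p^2\ff_A}{\p t_i\p t_j}}-i_{\frac{\p^2\ff_A}{\p t_j\p t_i}}\bigr)=0$ by symmetry of second derivatives, and $\frac{1}{u^2}[i_{\frac{\p\ff_A}{\p t_i}},i_{\frac{\p\ff_A}{\p t_j}}]=0$ because $i$ is a morphism of graded algebras and $\circ$ is graded commutative, so the $i_X$ graded-commute. The same two mechanisms, namely a $-\frac{1}{u^2}i_{\frac{\p\ff_A}{\p t_i}}$ telescoping cancellation and the vanishing of $[i_{\ff_A},i_{\frac{\p\ff_A}{\p t_i}}]$, give $[\nabla^\gamma_{\frac{d}{du}},\nabla^\gamma_{\frac{\p}{\p t_i}}]=0$, whence $\nabla^\gamma$ is flat.

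The two filtration statements are the crux. For $\frac{\p}{\p t_i}$ it is bookkeeping in $u$: a representative of a class in $\H_{\ff_A}^{(-1)}$ lies in $u\,T^{-w}\Omega_\bullet(A)[[u]]\widehat{\otimes}_k\O_M$, the summand $\frac{\p}{\p t_i}$ keeps it there, and $\frac{1}{u}i_{\frac{\p\ff_A}{\p t_i}}$ lowers the $u$-order by one, so the image lands in $T^{-w}\Omega_\bullet(A)[[u]]\widehat{\otimes}_k\O_M$ and represents a class in $\H_{\ff_A}^{(0)}$. The statement for $u\frac{d}{du}+E$ is where I expect the main difficulty, and I would treat it in exact parallel with the undeformed computation preceding this subsection. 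Using $E\ff_A=\sum_i(1-q_i)t_i\frac{\p\ff_A}{\p t_i}$ and Proposition~\ref{prop:Euler},
\[
\nabla^\gamma_{u\frac{d}{du}+E}=u\frac{d}{du}+E-\frac{1}{u}\,i_{\ff_A-E\ff_A}=u\frac{d}{du}+E-\frac{1}{u}\,i_{[{\mathfrak deg}_A,\ff_A]_G}.
\]
Then $i_{[{\mathfrak deg}_A,\ff_A]_G}=[i_{{\mathfrak deg}_A},L_{\ff_A}]$ with $L_{\ff_A}=-d_\gamma$, together with $[B,i_{{\mathfrak deg}_A}]=-L_{{\mathfrak deg}_A}$, rewrites this as
\[
\nabla^\gamma_{u\frac{d}{du}+E}=\left(u\frac{d}{du}+E+L_{{\mathfrak deg}_A}\right)+\frac{1}{u}\left[d_\gamma+uB,\,i_{{\mathfrak deg}_A}\right].
\]
The parenthesized operator preserves $T^{-w}\Omega_\bullet(A)[[u]]\widehat{\otimes}_k\O_M$, since $L_{{\mathfrak deg}_A}$ is a $u$-independent endomorphism of $\Omega_\bullet(A)$ and $u\frac{d}{du},E$ act on coefficients. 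On a $(d_\gamma+uB)$-cocycle $\omega$ the remaining term equals $(d_\gamma+uB)\bigl(\frac{1}{u}i_{{\mathfrak deg}_A}\omega\bigr)$, a coboundary; hence in $\H_{\ff_A}$ the class of $\nabla^\gamma_{u\frac{d}{du}+E}\omega$ is represented by $(u\frac{d}{du}+E+L_{{\mathfrak deg}_A})\omega\in T^{-w}\Omega_\bullet(A)[[u]]\widehat{\otimes}_k\O_M$, which is a cocycle and so defines a class in $\H_{\ff_A}^{(0)}$. The delicate points are to keep all signs correct through the Euler-identity rewriting and to argue that the coboundary $(d_\gamma+uB)\bigl(\frac{1}{u}i_{{\mathfrak deg}_A}\omega\bigr)$, although it carries a $u^{-1}$ component, is irrelevant precisely because it vanishes in $\H_{\ff_A}$.
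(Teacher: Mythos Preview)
Your proposal is correct and follows essentially the same route as the paper: the bracket identities are obtained term by term from Proposition~\ref{prop:iL} and Proposition~\ref{prop:46} together with $d_\gamma\ff_A=0$, $d_\gamma\frac{\p\ff_A}{\p t_i}=0$; flatness is checked on the coordinate vector fields via graded commutativity of the $i_X$; and the key filtration statement for $u\frac{d}{du}+E$ is proved by the same Euler-identity rewriting $\nabla^\gamma_{u\frac{d}{du}+E}=u\frac{d}{du}+E+L_{{\mathfrak deg}_A}+\frac{1}{u}[d_\gamma+uB,i_{{\mathfrak deg}_A}]$. Your final paragraph in fact spells out more explicitly than the paper does why the $\frac{1}{u}[d_\gamma+uB,i_{{\mathfrak deg}_A}]$ term is harmless in cohomology.
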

\begin{proof}
It is clear that $\nabla^\gamma$ is a connection, which is flat since 
{\small
\[
\left[\nabla^\gamma_{\frac{d}{du}},\nabla^\gamma_{\frac{\p}{\p t_i}}\right]=\frac{1}{u^2}i_{\frac{\p \ff_A}{\p t_i}}
-\frac{1}{u^2}i_{\frac{\p \ff_A}{\p t_i}}-\frac{1}{u^3}\left[i_{\ff_A},i_{\frac{\p \ff_A}{\p t_i}}\right]=0,
\]
\[
\left[\nabla^\gamma_{\frac{\p}{\p t_i}},\nabla^\gamma_{\frac{\p}{\p t_j}}\right]=
\frac{1}{u}i_{\frac{\p^2 \ff_A}{\p t_i\p t_j}}-(-1)^{(-\overline{t_i})\cdot (-\overline{t_j})}\frac{1}{u}
i_{\frac{\p^2 \ff_A}{\p t_j\p t_i}}+\frac{1}{u^2}\left[i_{\frac{\p \ff_A}{\p t_i}},i_{\frac{\p \ff_A}{\p t_j}}\right]=0.
\]
}
The equalities in Proposition~\ref{prop:46} yield 
{\small 
\begin{align*}
&\left[\nabla^\gamma_{u\frac{d}{du}},d_\gamma+uB\right]=uB-\frac{1}{u}\left[i_{\ff_A},d_\gamma+uB \right]
=uB-L_{\ff_A}=d_\gamma+uB,\\
&\left[\nabla^\gamma_{\frac{\p}{\p t_i}},d_\gamma+uB\right]=-\frac{\p }{\p t_i}L_{\ff_A}
-\frac{1}{u}\left[i_{\frac{\p \ff_A}{\p t_i}},L_{\ff_A} \right]+\left[i_{\frac{\p \ff_A}{\p t_i}},B \right]\\
=&\frac{\p }{\p t_i}\left[B,i_{\ff_A}\right]+\left[i_{\frac{\p \ff_A}{\p t_i}},B \right]
= (-1)^{-\overline{t_i}}\left[B,i_{\frac{\p \ff_A}{\p t_i}}\right]+(-1)^{1-\overline{t_i}}\left[B,i_{\frac{\p \ff_A}{\p t_i}}\right]=0.
\end{align*}
}
It is obvious from the definition that $\nabla^\gamma_{\frac{\p}{\p t_i}}\left(\H_{\ff_A}^{(-1)} \right)\subset \H_{\ff_A}^{(0)}$, $i=1,\dots, l_A$.
By Proposition~\ref{prop:Euler} and the equalities in Proposition~\ref{prop:46}, 
\begin{align*}
&\ u\frac{d}{du}-\frac{1}{u}i_{\ff_A}+E+\frac{1}{u}i_{E\ff_A}\\
=&\ u\frac{d}{du}+E-\frac{1}{u}\left[i_{{\mathfrak deg}_A},L_{\ff_A}\right]\\
=&\ u\frac{d}{du}+E+L_{{\mathfrak deg}_A}+\frac{1}{u}\left[d_\gamma+uB, i_{{\mathfrak deg}_A}\right],
\end{align*}
which gives the last statement.
\end{proof}
\begin{defn} 
The connection $\nabla^\gamma$ on $\H_{\ff_A}$ is called the Gau\ss--Manin connection.
\end{defn}
%%%%%%%%%%%%%%%%%%%%%%%%%%%%%%%%%%%%%%%%%%%%%%%%%%%%%%%%%%%%%%%%%%%%%%%%%
%%%%%%%%%%%%%%%%%%%%%%%%%%%%%%%%%%%%%%%%%%%%%%%%%%%%%%%%%%%%%%%%%%%%%%%%%
\section{Primitive forms and Frobenius structures}
Let $s^{(0)}:\Omega_{\f_A}\longrightarrow \H_{\f_A}^{(0)}$ be a very good section and 
let $\{v_1,\dots, v_{l_A}\}$ be elements of $\Omega_{\f_A}$ as in the proof of Proposition~\ref{prop:very good section}. 
Set $\zeta_i:=s^{(0)}(v_i)$ for $i=1,\dots, l_A$. 
%%%%%%%%%%%%%%%%%%%%%%%%%%%%%%%%%%%%%%%%%%%%%%%%%%%%%%%%%%%%%%%%%%%%%%%%%
\subsection{Fundamental solution to the Gau\ss--Manin connection}
By the equalities in Proposition~\ref{prop:iL} and Proposition~\ref{prop:46}
we obtain the following.
\begin{prop}
For all $\omega\in T^{-w}\Omega_\bullet(A)((u))\widehat{\otimes}_k \O_M$, we have
\[
(d_\gamma+uB)\left(e^{-\frac{i_{\gamma(t)}}{u}}\omega\right)=e^{-\frac{i_{\gamma(t)}}{u}}\left(d+uB\right)\omega
\]
Moreover, we have
\[
\nabla^\gamma_{\frac{\p}{\p t_j}}\left[e^{-\frac{i_{\gamma(t)}}{u}}\zeta_i\right]=0,\quad i,j=1,\dots, l_A,
\]
\[
\nabla^\gamma_{u\frac{d}{du}}\left[e^{-\frac{i_{\gamma(t)}}{u}}\zeta_i\right]
=q_i\cdot \left[e^{-\frac{i_{\gamma(t)}}{u}}\zeta_i\right], \quad i=1,\dots, l_A,
\]
where $\left[e^{-\frac{i_{\gamma(t)}}{u}}\zeta_i\right]$ denotes the equivalence class of 
$e^{-\frac{i_{\gamma(t)}}{u}}\zeta_i$ in $\H_{\ff_A}$.
\end{prop}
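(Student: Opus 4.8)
The plan is to verify all three claimed identities by direct computation using the operator identities already established in Proposition~\ref{prop:iL} and Proposition~\ref{prop:46}. The central technical object is the operator $e^{-i_{\gamma(t)}/u}$, which I would treat as a formal conjugation; since $i$ is a morphism of dg algebras with $i_X i_Y = i_{X\circ Y}$, and since the $i_{\gamma(t)}$ for various insertions commute appropriately (as $[i_X,i_Y]=0$ because $i_Xi_Y = i_{X\circ Y}=i_{Y\circ X}=i_Yi_X$ by graded commutativity of $\circ$), this exponential is well-defined on the $u$-adically completed complex and invertible with inverse $e^{+i_{\gamma(t)}/u}$.

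First I would establish the intertwining relation $(d_\gamma+uB)\,e^{-i_{\gamma(t)}/u} = e^{-i_{\gamma(t)}/u}\,(d+uB)$. The natural route is to compute the conjugated operator $e^{+i_{\gamma(t)}/u}(d_\gamma+uB)e^{-i_{\gamma(t)}/u}$ and show it equals $d+uB$. Expanding via the formal adjoint action, the $uB$-term contributes, through the bracket $[B,i_{\gamma(t)}]=-L_{\gamma(t)}$ from Proposition~\ref{prop:iL}, exactly the Lie derivative that deforms $d$ into $d_\gamma=-L_{\ff_A}=-L_{\f_A\otimes 1+\gamma(t)}=d-L_{\gamma(t)}$, using $L_{\f_A}=-d$. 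The higher terms in the expansion involve nested brackets $[i_{\gamma(t)},[i_{\gamma(t)},\,\cdot\,]]$; since $[i_X,i_Y]=0$, these collapse and the series truncates. The bookkeeping of the $1/u$ and $u$ powers together with the sign conventions is the step most prone to error, but structurally it is forced.

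Given the intertwining relation, the two statements about $\nabla^\gamma$ follow almost formally. Recall $\zeta_i=s^{(0)}(v_i)$ lies in $\H_{\f_A}^{(0)}$ and, by the very good section property from Proposition~\ref{prop:very good section}, satisfies $\nabla_{u\frac{d}{du}}\zeta_i = q_i\zeta_i$ and $(d+uB)\zeta_i=0$, so $e^{-i_{\gamma(t)}/u}\zeta_i$ is $(d_\gamma+uB)$-closed and hence a legitimate class in $\H_{\ff_A}$. For the flatness in the $t_j$-directions I would compute $\nabla^\gamma_{\frac{\p}{\p t_j}}(e^{-i_{\gamma(t)}/u}\zeta_i)$: the operator $\frac{\p}{\p t_j}$ acting on the exponential produces $-\frac{1}{u}i_{\frac{\p\gamma}{\p t_j}}e^{-i_{\gamma(t)}/u}\zeta_i$ (again using that the $i$'s commute so differentiation of the exponential is clean), while the connection term $+\frac{1}{u}i_{\frac{\p\ff_A}{\p t_j}}$ supplies exactly the canceling contribution because $\frac{\p\ff_A}{\p t_j}=\frac{\p\gamma}{\p t_j}$. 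Since $\zeta_i$ itself does not depend on $t_j$, the two terms cancel.

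Finally, for the $u\frac{d}{du}$-eigenvalue statement I would conjugate $\nabla^\gamma_{u\frac{d}{du}}$ by the exponential. The key observation is that the conjugation transports $\nabla^\gamma_{u\frac{d}{du}}=u\frac{d}{du}-\frac{1}{u}i_{\ff_A}$ to the undeformed operator $\nabla_{u\frac{d}{du}}=u\frac{d}{du}-\frac{1}{u}i_{\f_A}$ acting on $\zeta_i$, with the discrepancy generated by $u\frac{d}{du}$ hitting the $1/u$ inside the exponent and by the homogeneity relation $\ff_A-E\ff_A = [\mathfrak{deg}_A,\ff_A]_G$ from Proposition~\ref{prop:Euler}; one checks the extra terms assemble into $i_{\frac{\p\gamma}{\p t}}$-corrections that vanish modulo exactness. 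Then $\nabla^\gamma_{u\frac{d}{du}}(e^{-i_{\gamma(t)}/u}\zeta_i)=e^{-i_{\gamma(t)}/u}\nabla_{u\frac{d}{du}}\zeta_i = q_i\,e^{-i_{\gamma(t)}/u}\zeta_i$. I expect the main obstacle to be precisely the verification that the conjugation of the $u\frac{d}{du}$-connection is compatible with the homogeneity of $\gamma(t)$; all the sign and grading conventions must be tracked carefully, but no new geometric input beyond Proposition~\ref{prop:iL}, Proposition~\ref{prop:46} and the homogeneity identity is required.
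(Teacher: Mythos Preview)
Your proposal is correct and follows exactly the route the paper indicates (it only cites Proposition~\ref{prop:iL} and Proposition~\ref{prop:46} without further detail). One simplification for your third step: you do not need the homogeneity relation from Proposition~\ref{prop:Euler} or any ``modulo exactness'' argument. Since all $i_X$ commute, applying $u\frac{d}{du}$ to $e^{-i_{\gamma(t)}/u}$ yields exactly $\frac{1}{u}i_{\gamma(t)}e^{-i_{\gamma(t)}/u}$, while $i_{\ff_A}=i_{\f_A}+i_{\gamma(t)}$ commutes through the exponential; the two $\frac{1}{u}i_{\gamma(t)}$ contributions cancel on the nose, giving
\[
\nabla^\gamma_{u\frac{d}{du}}\bigl(e^{-i_{\gamma(t)}/u}\zeta_i\bigr)=e^{-i_{\gamma(t)}/u}\,\nabla_{u\frac{d}{du}}\zeta_i=q_i\,e^{-i_{\gamma(t)}/u}\zeta_i
\]
already at the level of representatives.
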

Therefore, we may identify $\H_{\f_A}\widehat{\otimes}_k \O_M$ with $\H_{\ff_A}$ as follows 
(cf. Proposition 1.3~in \cite{MSaito2}).
\begin{prop}
There is an isomorphism 
\begin{equation}\label{eq:flat sections}
\J_{\ff_A}:{\H}_{\ff_A}\stackrel{\cong}{\longrightarrow} {\H}_{\f_A}\widehat{\otimes}_k \O_M,
\quad \omega\mapsto e^{\frac{i_{\gamma(t)}}{u}}\omega,
\end{equation}
which is compatibles with $k((u))\widehat{\otimes}_k \O_M$-module structures, Gau\ss--Manin connections 
and multiplications of $u$ on both sides.
\end{prop}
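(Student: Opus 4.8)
The plan is to recognize that $\J_{\ff_A}$ is simply multiplication by the invertible operator $e^{\frac{i_{\gamma(t)}}{u}}$, whose inverse is $e^{-\frac{i_{\gamma(t)}}{u}}$, and to read off all the asserted compatibilities from the preceding proposition. First I would check that $e^{\pm\frac{i_{\gamma(t)}}{u}}$ are well defined as $\O_M$-linear operators on $T^{-w}\Omega_\bullet(A)((u))\widehat{\otimes}_k\O_M$: since $\gamma(t)\in\T^\bullet_{poly}(A)\widehat{\otimes}_k\m$, every factor $i_{\gamma(t)}$ carries an element of $\m$, so modulo $\m^{N+1}$ the exponential series truncates to a finite sum of operators of bounded pole order in $u$; passing to the $\m$-adic limit shows that $e^{\pm\frac{i_{\gamma(t)}}{u}}$ preserve the Laurent structure and are mutually inverse. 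The intertwining identity $(d_\gamma+uB)e^{-\frac{i_{\gamma(t)}}{u}}=e^{-\frac{i_{\gamma(t)}}{u}}(d+uB)$ of the preceding proposition then says exactly that $e^{-\frac{i_{\gamma(t)}}{u}}$ is a cochain map from $(T^{-w}\Omega_\bullet(A)((u))\widehat{\otimes}_k\O_M,\,d+uB)$ to $(\cdot,\,d_\gamma+uB)$, and conjugating the identity shows $e^{\frac{i_{\gamma(t)}}{u}}$ is a cochain map in the opposite direction. Hence the two induce mutually inverse maps on homology, so $\J_{\ff_A}$ is an isomorphism.

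The compatibility with the $k((u))\widehat{\otimes}_k\O_M$-module structure and with multiplication by $u$ is then immediate: the contraction $i_{\gamma(t)}$ is $\O_M$-linear and involves no $u$, so it commutes with multiplication by $u^{\pm1}$; therefore $e^{\frac{i_{\gamma(t)}}{u}}$ is $k((u))\widehat{\otimes}_k\O_M$-linear and commutes with multiplication by $u$, and these properties descend to $\J_{\ff_A}$.

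For horizontality with respect to the Gau\ss--Manin connections I would argue on a basis. By the very good section, $\{\zeta_i=s^{(0)}(v_i)\}_{i=1}^{l_A}$ is a $k((u))$-basis of $\H_{\f_A}$, hence a $k((u))\widehat{\otimes}_k\O_M$-basis of $\H_{\f_A}\widehat{\otimes}_k\O_M$, and applying $\J_{\ff_A}^{-1}=e^{-\frac{i_{\gamma(t)}}{u}}$ shows that $\{e^{-\frac{i_{\gamma(t)}}{u}}\zeta_i\}$ is a $k((u))\widehat{\otimes}_k\O_M$-basis of $\H_{\ff_A}$. The preceding proposition gives $\nabla^\gamma_{\frac{\p}{\p t_j}}[e^{-\frac{i_{\gamma(t)}}{u}}\zeta_i]=0$ and $\nabla^\gamma_{u\frac{d}{du}}[e^{-\frac{i_{\gamma(t)}}{u}}\zeta_i]=q_i[e^{-\frac{i_{\gamma(t)}}{u}}\zeta_i]$, while on the target, equipped with the tensor-product connection (trivial in the $t$-directions), the constant sections $\zeta_i=\J_{\ff_A}[e^{-\frac{i_{\gamma(t)}}{u}}\zeta_i]$ satisfy $\nabla_{\frac{\p}{\p t_j}}\zeta_i=0$ and, by the very good section property, $\nabla_{u\frac{d}{du}}\zeta_i=q_i\zeta_i$. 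Since each connection is a derivation over the scalars $\O_M[[u]]$ with the same underlying vector fields and $\J_{\ff_A}$ is scalar-linear, the Leibniz rule reduces the equality $\J_{\ff_A}\nabla^\gamma_\xi=\nabla_\xi\J_{\ff_A}$ to its values on a basis, which we have just matched; hence $\J_{\ff_A}$ intertwines $\nabla^\gamma$ and $\nabla$.

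The main obstacle is the foundational one of making the exponential operator rigorous: one must verify that $e^{\frac{i_{\gamma(t)}}{u}}$, despite the negative powers of $u$ in its series, defines an honest $\O_M$-linear endomorphism of the Laurent-series complex, and that it commutes with $i_{\gamma(t)}$ and with the contractions $i_{\frac{\p\gamma}{\p t_j}}$ so that the computations feeding the preceding proposition are legitimate. This rests entirely on $\gamma(t)$ lying in $\m$, together with the relation $i_Xi_Y=i_{X\circ Y}$ of Proposition~\ref{prop:iL} and the graded commutativity of $\circ$; once this is in place, the remaining content is the purely formal bookkeeping that two mutually inverse cochain maps respect all the structures in sight.
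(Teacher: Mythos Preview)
Your proof is correct and follows precisely the route the paper intends: the paper gives no separate proof of this proposition, prefacing it with ``Therefore'' and a reference to \cite{MSaito2}, so it is meant to be read as a direct consequence of the preceding proposition, which is exactly how you argue it. Your elaboration of the well-definedness of $e^{\pm i_{\gamma(t)}/u}$ via the $\m$-adic filtration and of the connection compatibility via the flat basis $\{[e^{-i_{\gamma(t)}/u}\zeta_i]\}$ fills in the details the paper leaves implicit.
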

This idea together with the equation~\eqref{33} below is the essential part of the construction of primitive forms, 
which is implicit in M.~Saito's paper~\cite{MSaito}, later re-discovered by Barannikov~\cite{bar:1,bar:2} and 
recently used by C.~Li--S.~Li--K.~Saito~\cite{LLS} and M.~Saito~\cite{MSaito2}.
%%%%%%%%%%%%%%%%%%%%%%%%%%%%%%%%%%%%%%%%%%%%%%%%%%%%%%%%%%%%%%%%%%%%%%%%%
\subsection{Primitive forms}
Pull the $\O_M$-linear extension of the higher residue parings $K_{\f_A}$ back to $\H_{\ff_A}$ by 
the isomorphism $\J_{\ff_A}$ and denote it by $K_{\ff_A}$.
We can now introduce the notion of primitive forms.
\begin{defn}[cf. K.~Saito, Definition~3.1 in \cite{s:1}]\label{primitive form zeta}
Let $r\in k$. An element $\zeta\in\H_{\ff_A}^{(0)}$ is called a formal primitive form with the minimal exponent $r$ 
for the tuple $(\H_{\ff_A}^{(0)},\nabla^\gamma, K_{\ff_A})$ if it satisfies following five conditions$;$
\begin{enumerate}
\item $u\nabla^\gamma_{e}\zeta=\zeta$ and $\zeta$ induces an $\O_M$-isomorphism:
\begin{equation}\tag{P1}\label{P1}
\T_M[[u]]\cong \H_{\ff_A}^{(0)},\quad \sum_{p=0}^\infty\delta_p u^{p}\mapsto 
\sum_{p=0}^\infty(u\nabla^\gamma_{\delta_p}\zeta)u^p.
\end{equation}
\item We have
\begin{equation}\tag{P2}\label{P2}
K_{\ff_A}(u\nabla^\gamma_\delta\zeta,u\nabla^\gamma_{\delta'}\zeta)\in k\cdot u^{w},\quad\delta,\delta'\in\T_M.
\end{equation}
\item We have
\begin{equation}\tag{P3}\label{P3}
\nabla^\gamma_{u\frac{d}{du}+E}\zeta=r\zeta.
\end{equation}
\item There exists a connection $\ns$ on $\T_M$ such that 
\begin{equation}\tag{P4}\label{P4}
u\nabla^\gamma_X\nabla^\gamma_{Y}\zeta=\nabla^\gamma_{X\circ Y}\zeta
+u\nabla^\gamma_{\ns_X Y}\zeta,\quad X,Y\in\T_M.
\end{equation}
\item There exists an $\O_M$-endomorphism $N:\T_M\longrightarrow \T_M$ such that 
\begin{equation}\tag{P5}\label{P5}
u\nabla^\gamma_\frac{d}{du}(u\nabla^\gamma_X\zeta)=-\nabla^\gamma_{E\circ X}\zeta+u\nabla^\gamma_{NX}\zeta,\quad 
X\in\T_M.
\end{equation}
\end{enumerate}
\end{defn}
We obtain a formal primitive form by applying a famous construction of primitive forms developed by M.~Saito and Barannikov.
\begin{thm}
There exists a formal primitive form $\zeta$ with the minimal exponent zero for the tuple $(\H_{\ff_A}^{(0)},\nabla^\gamma, K_{\ff_A})$.
\end{thm}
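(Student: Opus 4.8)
The plan is to deduce the existence of $\zeta$ from the construction of primitive forms due to M.~Saito and Barannikov, whose input is exactly the package assembled in the preceding sections: the deformed filtered de Rham cohomology $\H_{\ff_A}$ together with its lattice $\H_{\ff_A}^{(0)}$, the Gau\ss--Manin connection $\nabla^\gamma$, the higher residue pairing $K_{\ff_A}$, and the ``good opposite filtration'' coming from the very good section $s^{(0)}$. The work is therefore to transport the marking of Proposition~\ref{prop:marking} into the deformed setting through the isomorphism $\J_{\ff_A}$, to read off a distinguished section $\zeta\in\H_{\ff_A}^{(0)}$, and to check the five axioms \eqref{P1}--\eqref{P5}.

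First I would transport the opposite filtration. Set $S':=\J_{\ff_A}^{-1}\left(S\widehat{\otimes}_k\O_M\right)\subset\H_{\ff_A}$. Since $\J_{\ff_A}$ is $k((u))\widehat{\otimes}_k\O_M$-linear and intertwines the Gau\ss--Manin connections, multiplication by $u$, and (after pull-back) the pairings, the four properties of $S$ in Proposition~\ref{prop:marking} yield the deformed analogues $\H_{\ff_A}=\H_{\ff_A}^{(0)}\oplus S'$, $u^{-1}S'\subset S'$, $\nabla^\gamma_{u\frac{d}{du}+E}S'\subset S'$, and $K_{\ff_A}(S',S')\subset k[u^{-1}]u^{w-2}\widehat{\otimes}_k\O_M$. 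The one genuinely new feature, which has no counterpart before deformation, is horizontality along $M$: because the sections $\J_{\ff_A}^{-1}(\zeta_i)=\left[e^{-\frac{i_{\gamma(t)}}{u}}\zeta_i\right]$ are $\nabla^\gamma_{\frac{\p}{\p t_j}}$-flat, one obtains $\nabla^\gamma_{\frac{\p}{\p t_j}}S'\subset S'$ for $j=1,\dots,l_A$.

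Next I would extract $\zeta$. Let $\omega_1:=\J_{\ff_A}^{-1}(\zeta_1)=\left[e^{-\frac{i_{\gamma(t)}}{u}}\zeta_1\right]$ be the horizontal section attached to the minimal exponent, recalling from the proof of Proposition~\ref{prop:very good section} that $q_1=0$ since $v_1\in\Omega_{\f_A}^{w,0}$. The Saito--Barannikov recipe then produces, from the splitting of the previous paragraph together with $\omega_1$, a unique section $\zeta\in\H_{\ff_A}^{(0)}$ lifting the class of $v_1$ in $\Omega_{\ff_A}$ and compatible with $S'$. The verification of the axioms now mirrors, clause by clause, the deformed marking: \eqref{P3} with $r=0$ follows from $q_1=0$ and the $\nabla^\gamma_{u\frac{d}{du}+E}$-invariance of $S'$; the equation $u\nabla^\gamma_e\zeta=\zeta$ in \eqref{P1} follows from $\rho(e)=[1_A]$ together with $i_{[1_A]}=\mathrm{id}$ (Proposition~\ref{prop:iL}) and the $M$-horizontality of the construction; \eqref{P2} follows from the pairing bound and homogeneity; and the operators $\ns$ of \eqref{P4} and $N$ of \eqref{P5} are defined by feeding $u\nabla^\gamma_X\nabla^\gamma_{Y}\zeta$ and $u\nabla^\gamma_{\frac{d}{du}}(u\nabla^\gamma_X\zeta)$ through the period isomorphism and reading off their $\O_M[[u]]$-coefficients.

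The hard part will be the isomorphism assertion in \eqref{P1}, namely that $\sum_{p}\delta_pu^{p}\mapsto\sum_{p}(u\nabla^\gamma_{\delta_p}\zeta)u^{p}$ is an $\O_M$-isomorphism $\T_M[[u]]\cong\H_{\ff_A}^{(0)}$, and then \eqref{P4}. This period isomorphism is the linchpin, since it is precisely what makes the product $\circ$ and the flat connection $\ns$ on $\T_M$---and hence the Frobenius structure---well defined. At $u$-leading order it collapses to the isomorphism $\T_M\cong\Omega_{\ff_A}$, $\frac{\p}{\p t_i}\mapsto i_{\frac{\p\ff_A}{\p t_i}}v_1$, which is Proposition~\ref{prop:versal deformation}(1) combined with Assumption~\ref{assum:isom}, while the higher $u$-orders are controlled by the $\nabla^\gamma_{\frac{\p}{\p t_j}}$-invariance and $u^{-1}$-stability of $S'$ established above. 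Granting \eqref{P1}, axiom \eqref{P4} becomes a formal consequence of the flatness of $\nabla^\gamma$: flatness forces the extracted $\circ$ to be commutative and associative and $\ns$ to be torsion-free and flat, so that the resulting structure is genuinely a (formal) Frobenius manifold.
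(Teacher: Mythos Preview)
Your proposal is correct and follows the same Saito--Barannikov route as the paper, which condenses the whole argument into the single defining equation $\J_{\ff_A}(\zeta)=\J_{\ff_A}(\H_{\ff_A}^{(0)})\cap\bigl(\zeta_1+S\widehat{\otimes}_k\O_M\bigr)$ together with an appeal to Proposition~\ref{prop:marking} for the existence of $S$. One small point worth tightening: the splitting $\H_{\ff_A}=\H_{\ff_A}^{(0)}\oplus S'$ does \emph{not} follow just from $\J_{\ff_A}$ being a $k((u))\widehat{\otimes}_k\O_M$-linear isomorphism, since $\J_{\ff_A}$ does not carry $\H_{\f_A}^{(0)}\widehat{\otimes}_k\O_M$ onto $\H_{\ff_A}^{(0)}$; what makes it work is that $\J_{\ff_A}\equiv\mathrm{id}\pmod{\m}$, so the undeformed splitting persists by an $\m$-adic (Nakayama-type) argument---this is precisely the mechanism your ``hard part'' paragraph is reaching for and what the references to M.~Saito and Barannikov supply.
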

\begin{proof}
If a graded $k$-submodule $S$ of $\H_{\f_A}$ satisfies
\[
\H_{\f_A}=\H_{\f_A}^{(0)}\oplus S,\ 
u^{-1}S\subset S,\ 
\nabla_{u\frac{d}{du}}S\subset S,\ 
K_{\f_A}(S,S)\subset k[u^{-1}]u^{w-2},
\]
then the unique element $\zeta\in \H_{\ff_A}^{(0)}$ such that
\begin{equation}\label{33}
\J_{\ff_A}(\zeta)=\J_{\ff_A}({\H}_{\ff_A}^{(0)})\cap \left(\zeta_1+S\widehat{\otimes}_k\O_M\right),
\end{equation} 
becomes a formal primitive form, whose minimal exponent is zero due to the second property of Proposition~\ref{prop:Hodge numbers}.
Proposition~\ref{prop:marking} shows that such an $S$ exists.
\end{proof}
%%%%%%%%%%%%%%%%%%%%%%%%%%%%%%%%%%%%%%%%%%%%%%%%%%%%%%%%%%%%%%%%%%%%%%%%%
\subsection{Frobenius structures}
By a standard procedure, the existence of a formal primitive form implies the existence of a formal Frobenius structure 
(cf. Theorem~7.5 in \cite{st:1}, which can be generalized to graded cases verbatim).
\begin{thm}
Let $\zeta$ be a formal primitive form with the minimal exponent zero for the tuple $(\H_{\ff_A}^{(0)},\nabla, K_{\ff_A})$.
Define an $\O_M$-bilinear form $\eta_{\zeta^{\otimes 2}}:\T_M\otimes_{\O_M}\T_M\longrightarrow \O_M$ by
\begin{equation}
\eta_{\zeta^{\otimes 2}}(X,Y):=(-1)^{w\cdot \overline{Y}}K_{\ff_A}(u\nabla_X\zeta,u\nabla_Y\zeta).
\end{equation}
Then the tuple $(\circ, \eta_{\zeta^{\otimes 2}},e, E)$ gives a formal Frobenius structure on $\T_M$ such that 
$Lie_E(\circ)=\circ$ and $Lie_E(\eta_{\zeta^{\otimes 2}})=(2-w)\eta_{\zeta^{\otimes 2}}$. 
\end{thm}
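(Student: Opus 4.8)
The plan is to run the standard extraction procedure that converts the primitive-form axioms \eqref{P1}--\eqref{P5} into the defining data of a Frobenius structure, exactly as in Theorem~7.5 of \cite{st:1}, while keeping careful track of the Koszul signs that the $\ZZ$-grading forces upon every pairing, product and covariant derivative. First I would read off the flat connection $\ns$ on $\T_M$ directly from \eqref{P4}, which is precisely the connection whose existence is asserted there. Its flatness is inherited from the flatness of the Gau\ss--Manin connection $\nabla^\gamma$ established above: substituting \eqref{P4} into the vanishing curvature $[\nabla^\gamma_X,\nabla^\gamma_Y]\zeta=0$ and comparing the coefficients of $u^0$ and $u^1$ yields both the flatness of $\ns$ and its torsion-freeness, the latter using the commutativity of $\circ$ and the symmetry of the mixed second derivatives $\partial^2\ff_A/\partial t_i\partial t_j$. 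The product $\circ$ is the commutative associative product already transported to $\T_M$ through the isomorphism \eqref{eq:KS}, so no separate associativity argument is needed.

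Next I would analyze the metric. Axiom \eqref{P2} guarantees that $K_{\ff_A}(u\nabla^\gamma_X\zeta,u\nabla^\gamma_Y\zeta)$ lies in $\O_M\cdot u^{w}$, so $\eta_{\zeta^{\otimes 2}}$ is well defined as the coefficient of $u^{w}$; graded symmetry follows from the symmetry of $J_{\f_A}$ together with the normalizing sign $(-1)^{w\cdot\overline{Y}}$, and non-degeneracy follows from the perfectness of the pairing $J_{\f_A}$ (see Proposition~\ref{prop:Hodge numbers}), transported through the isomorphism \eqref{P1}. Flatness $\ns\eta_{\zeta^{\otimes 2}}=0$ comes from the compatibility of $K_{\ff_A}$ with $\nabla^\gamma$ in the $t_i$-directions, where $\nabla^\gamma$ preserves $K_{\ff_A}$, again by extracting the $u^{w}$-term; and the Frobenius identity $\eta_{\zeta^{\otimes 2}}(X\circ Y,Z)=\eta_{\zeta^{\otimes 2}}(X,Y\circ Z)$ is obtained by inserting \eqref{P4} into both sides and using the $k((u))$-sesquilinearity of $K_{\ff_A}$. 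Finally the unit $e$ is $\ns$-flat and is the identity for $\circ$ because of the normalization $u\nabla^\gamma_e\zeta=\zeta$ built into \eqref{P1}, which identifies $e$ with the lowest-order term $\zeta$ itself.

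For the Euler-field conditions I would use \eqref{P3} and \eqref{P5}. Combining \eqref{P5} with the explicit homogeneity $E=\sum_{i=1}^{l_A}(1-q_i)t_i\partial/\partial t_i$ and the ``Euler's identity'' of Proposition~\ref{prop:Euler} gives the conformality $Lie_E(\circ)=\circ$; and \eqref{P3} with minimal exponent $r=0$, together with the weight $u^{w}$ carried by the metric and the sign reversal $u\mapsto-u$ in the definition of $K_{\ff_A}$, produces $Lie_E(\eta_{\zeta^{\otimes 2}})=(2-w)\eta_{\zeta^{\otimes 2}}$, with the $-w$ coming from $u^{w}$ and the $+2$ from the action of $\nabla^\gamma_{u\frac{d}{du}+E}$.

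I expect the main obstacle to be not any single step but the uniform bookkeeping of the Koszul signs, which are absent in the ungraded Theorem~7.5 of \cite{st:1}: one must verify that the sign $(-1)^{w\cdot\overline{Y}}$ is exactly what makes $\eta_{\zeta^{\otimes 2}}$ graded-symmetric and the Frobenius identity hold. The genuinely structural point is the \emph{potentiality} of the resulting structure, i.e. the graded symmetry of the $(0,4)$-tensor $\ns c$ with $c(X,Y,Z)=\eta_{\zeta^{\otimes 2}}(X\circ Y,Z)$; this follows from the flatness of $\nabla^\gamma$ and \eqref{P4} by a curvature computation, but in the graded setting each reordering of $X,Y,Z$ introduces degree-dependent signs that must be checked to cancel term by term.
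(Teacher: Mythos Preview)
Your proposal is correct and follows exactly the route the paper indicates: the paper gives no proof at all but simply cites Theorem~7.5 of \cite{st:1} and asserts that the argument carries over to the graded setting verbatim, and your outline is precisely a careful unpacking of that standard extraction procedure from the axioms \eqref{P1}--\eqref{P5}. Your additional attention to the Koszul sign bookkeeping is a reasonable elaboration on what the paper sweeps under the word ``verbatim''.
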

%%%%%%%%%%%%%%%%%%%%%%%%%%%%%%%%%
% References
%%%%%%%%%%%%%%%%%%%%%%%%%%%%%%%%%


\begin{thebibliography}{99}
\bibitem{bar:1}
	S.~Barannikov,
	{\it Quantum Periods, I: Semi-Infinite Variations of Hodge Structures},
	Internat. Math. Res. Notices {\bf 23} (2001), 1243--1264. 
\bibitem{bar:2}
	S.~Barannikov,
	{\it Non-Commutative Periods and Mirror symmetry in Higher dimension},
	Commun. Math. Phys. {\bf 228} (2002), 281--325.
\bibitem{BK:1}
	S.~Barannikov and M.~Kontsevich,
	{\it Frobenius Manifolds and Formality of Lie Algebras of Polyvector Fields},
	Internat. Math. Res. Notices {\bf 4} (1998), 201--215. 
\bibitem{dgt:1}
	Yu.~Daletski, I.~Gelfand and B.~Tsygan,
	{\it On a variant of noncommutative geometry}, Soviet Math. Dokl., {\bf 40}, 2 (1990), 422--426.
\bibitem{dtt:1}
	V.A.~Dolgushev, D.E.~Tamarkin and B.L.~Tsygan, 
	{\it Formality of the homotopy calculus algebra of Hochschild (co)chains}, arXiv:0807.5117.
\bibitem{dtt:2}
	V.A.~Dolgushev, D.E.~Tamarkin and B.L.~Tsygan, 
	{\it Formality theorems for Hochschild complexes and their applications}, 
	 Lett. Math. Phys. {\bf 90} (2009), no. 1--3, 103--136.
\bibitem{d:1}
	B.~Dubrovin,
	{\it Geometry of 2D Topological Field Theories}, 
	in Integrable system and Quantum Groups, Montecatini,Terme 1993. 
	Lecture Note in Math. {\bf 1620}, Springer--Verlag 1996, 120--348. 
\bibitem{ger:1}
	M.~Gerstenhaber, 
	{\it The cohomology structure of an associative ring}, 
	Annals of Math., {\bf 78} (1963), 267--288.
\bibitem{Ginzburg}
	V.~Ginzburg,
	{\it Calabi--Yau algebras},
	arXiv:math/0612139
\bibitem{he:1}
	C.~Hertling,
	{\it  Frobenius manifolds and moduli spaces for singularities}, 
	Cambridge University Press,2002.
\bibitem{kaledin:1}
	D.~Kaledin,
	{\it Non-commutative Hodge-to-de Rham degeneration via the method of Deligne-Illusie}, 
	Pure Appl. Math. Q. {\bf 4} (2008), no. 3, Special Issue: In honor of Fedor Bogomolov. 
	Part 2, 785--875. 
\bibitem{KaledinTokyo}
	D.~Kaledin,
	{\it Homological methods in Non-commutative Geometry}, 
	Lecture notes for the course ``Homological methods in Non-commutative Geometry",
	University of Tokyo, October 2007--March 2008. http://imperium.lenin.ru/~kaledin/tokyo/
\bibitem{Kas-S}
	M.~Kashiwara and P.~Schapira,
	{\it Categories and sheaves},
	Fundamental Principles of Mathematical Sciences, {\bf 332}. Springer--Verlag, Berlin, 2006. x+497 pp.
\bibitem{kakop:1}
	L.~Katzarkov, M.~Kontsevich and T.~Pantev,
	{\it Hodge theoretic aspects of mirror symmetry}, 
	Proceedings of Symposia in Pure Mathematics {\bf 78} (2008), 87--174.
\bibitem{Keller}
	B.~Keller,
	{\it On differential graded categories}, 
	International Congress of Mathematicians. Vol. II, Eur. Math. Soc., Z\"urich (2006), 151--190.
\bibitem{K}
	M.~Kontsevich,
	{\it Homological algebra of mirror symmetry},
	In Proceedings of the ICM (Z\"urich, 1994), Birkhäuser, Basel, (1995), 120--139.
\bibitem{ks:1}
	M.~Kontsevich and Y.~Soibelmann,
	{\it Notes on $\A_\infty$-algebras, $\A_\infty$-categories and non-commutative geometry. I},
	Homological mirror symmetry, Lecture Notes in Phys., Springer, Berlin, {\bf 757} (2009), 153--219.
\bibitem{LLS}
	C.~Li, S.~Li and K.~Saito,
	{\it Primitive forms via polyvector fields},
	arXiv:1311.1659.
\bibitem{ma:1}
	Y.~Manin,
	{\it Frobenius manifolds, quantum cohomology, and moduli spaces}, 
	American Math.Society Colloquium Publ. v.{\bf 47}, 1999.
\bibitem{sab:1}
	C.~Sabbah, 
	{\it D\'eformations isomonodromiques et vari\'et\'es de Frobenius}, 2002, EDP Sciences.
\bibitem{s:1}
	K.~Saito, 
	{\it Period mapping associated to a primitive form}, 
	Publ. RIMS, Kyoto Univ. {\bf 19} (1983), 1231--1264.
\bibitem{s:2}
	K.~Saito, 
	{\it The higher residue pairings $K^{(k)}_F$ for a family of hypersurface singular points}, 
	Advanced Studies in Pure Mathematics {\bf 40} (1983), part 2, 441--463.
\bibitem{st:1}
	K.~Saito and A.~Takahashi
	{\it From Primitive Forms to Frobenius Manifolds},
	Proceedings of Symposia in Pure Mathematics {\bf 78} (2008), 31--48.
\bibitem{MSaito}
	M. Saito,
	{\it On the structure of Brieskorn lattice}, 
	Annales de l'institut Fourier, {\bf 39} (1989), 27--72.
\bibitem{MSaito2}
	M. Saito,
	{\it On the structure of Brieskorn lattice, II}, 
	arXiv:1312.6629.
\bibitem{ter:1}
	J.~Terllia, 
	{\it Smoothness theorem for differential BV algebras}, 
	arXiv: 0707.1290.
\bibitem{Toen}
	B.~T\"oen,
	{\it The homotopy theory of dg-categories and derived Morita theory},
	Invent. Math. {\bf 167} (2007), no. 3, 615--667. 
%%%%%%%%%%%%%%%%%%%%%%%%%%%%%%%%%%%%%%%
%\bibitem{B1}
%T. Bridgeland,
%Equivalences of triangulated categories and Fourier-Mukai transforms,
%Bull. London Math. Soc., {\bf 31} (1999), 25--34.
%%%%%%%%%%%%%%%%%%%%%%%%%%%%%%%%%%%%%%%%
%\bibitem{Sa}
%I. Satake,
%Algebraic Structures of Symmetric Domains,
%Publ. Math. Soc. Japan, {\bf 14},
%Iwanami Shoten, Tokyo;
%Princeton University Press, Princeton, N.J., 1980.
%%%%%%%%%%%%%%%%%%%%%%%%%%%%%%%%%%%%%%%%%
%\bibitem{Vo} D. A. Vogan,  Jr.,
%A Langlands classification for unitary representations,
%In: Analysis on Homogeneous Spaces and Representation Theory of Lie Groups,
%Okayama-Kyoto,  1997,
%(eds. T. Kobayashi, M. Kashiwara, T. Matsuki, K. Nishiyama and T. Oshima),
%Adv. Stud. Pure Math., {\bf 26}, Math. Soc. Japan, 2000, pp. 299--324.
%%%%%%%%%%%%%%%%%%%%%%%%%%%%%%%%%%%%%%%%%%


\end{thebibliography}
\end{document}